\newcommand{\Del}[1]{\Delta_{#1}( A_{n}\;\vert\;R)}
\newcommand{\W}[1]{\mathcal{W}(n,#1)}
\newcommand{\vect}[1]{\underline{\boldsymbol{#1}}}
\newcommand{\norm}[1]{\vert \vert \underline{\boldsymbol{#1}}\vert \vert }
\newcommand{\bW}[1]{\mathcal{W}\bigg(n,#1\bigg)}
\newcommand{\pres}[2]{\langle #1\;\vert\;#2\rangle}
\newcommand{\aas}[1]{a.a.s.$(#1)$}
\newtheorem{theoremalph}{Theorem}
\newtheorem{theorem}{Theorem}[section]
\newtheorem{lemma}[theorem]{Lemma}
\newtheorem*{remarkit*}{Remark}
\newtheorem*{theorem*}{Theorem}
\newtheorem*{lemma*}{Lemma}
\newtheorem*{proposition*} {Proposition}
\theoremstyle{definition}
\newtheorem{definition}[theorem]{Definition}
\newtheorem{remark}[theorem]{Remark}
\newtheorem*{remark*}{Remark}
\setlist[enumerate]{align=left}
\setlist[enumerate]{align=left}
\title{Property (T) in density-type models of random groups}
\author{Calum J. Ashcroft}
\date{\vspace{-2em}}
\begin{document}
\maketitle
	\begin{abstract}
We study Property (T) in the $\Gamma(n,k,d)$ model of random groups: as $k$ tends to infinity this gives the Gromov density model, introduced in \cite{gromovasymptotic}. We provide bounds for Property (T) in the $k$-angular model of random groups, i.e. the $
\Gamma (n,k,d)$ model where $k$ is fixed and $n$ tends to infinity. We also prove that for $d>1\slash 3$, a random group in the $\Gamma(n,k,d)$ model has Property (T) with probability tending to $1$ as $k$ tends to infinity, strengthening the results of \.{Z}uk and Kotowski--Kotowski, who consider only groups in the $\Gamma (n,3k,d)$ model. 
	\end{abstract}

 
\section{Introduction}
\subsection{Property (T) in random groups}
Gromov proposed two models of random groups in \cite{gromovasymptotic} to study the notion of a `generic' finitely presented group. There is some ambiguity in the literature between the two models, and so we provide the full definitions here.

Fix $n\geq 2, k\geq 3$, and $0<d<1$. The (\emph{strict}) $(n,k,d)$ \emph{model} is obtained as followed. Let $A_{n}=\{a_{1},\hdots, a_{n}\}$, and let $F_{n}:=\mathbb{F}(A_{n})$ be the free group generated by $A_{n}$. Let $\mathcal{C}(n,k)$ be the set of cyclically reduced words of length $k$ in $F_{n}$ (so that $\mathcal{C}(n,k)\approx (2n-1)^{k})$. Uniformly randomly select a set $R\subseteq \mathcal{C}(n,k)s$ of size $\vert R\vert =(2n-1)^{kd}$, and let $\Gamma:=\pres{A_{n}}{R}$. We call $\Gamma$ a \emph{random group} in the (\emph{strict}) $(n,k,d)$ \emph{model}, and write $\Gamma\sim \Gamma(n,k,d)$.

If we keep $n$ fixed and let $k$ tend to infinity, then we obtain the \emph{Gromov density model}, as introduced in \cite{gromovasymptotic}, whereas if we fix $k$ and let $n$ tend to infinity we obtain the \emph{$k$-angular model}, as introduced in \cite{ashcroftrandom}. The $k$-angular model was first studied for $k=3$ (the \emph{triangular model}) by \.{Z}uk in \cite{Zuk} and for $k=4$ (the \emph{square model}) by Odrzyg{\'o}{\'z}d{\'z} in \cite{odrsquaremodel}.

The \emph{lax} $(n,k,d,f)$ \emph{model} is obtained via the following procedure. Let \\$\mathcal{C}(n,k,f)$ be the set of cyclically reduced words of length between $k-f(k)$ and $k+f(k)$ in $F_{n}$, where $f(k)=o(k)$. Uniformly randomly select a set $R\subseteq \mathcal{C}(n,k,f)$ of size $\vert R\vert =(2n-1)^{kd}$, and let $\Gamma:=\pres{A_{n}}{R}$. We call $\Gamma$ a \emph{random group} in the \emph{lax $(n,k,d,f)$ model}, and write $\Gamma\sim \Gamma_{lax}(n,k,d,f)$.

 We first consider the case of the $k$-angular model. It is a seminal theorem of \.{Z}uk \cite{Zuk} (c.f. \cite{kotowski}) that for $d>1\slash 3$  a random group in the triangular model has Property (T) with probability tending to $1$. As observed in \cite{odrzygozdz2019bent} the case of $k$ divisible by $3$ is easier, as we may use the work of \cite{Zuk} and \cite{kotowski} to observe Property (T) at densities greater than $1\slash 3$: see \cite{Montee_prop_t} for the proof that $3k$-angular has Property (T) for any $d>1\slash 3$. This idea was in fact extended in \cite{Montee_prop_t} to passing from Property (T) in $\Gamma (n,k,d)$ to $\Gamma (n,lk,d)$ for $l\geq 1$. For $k\geq 3$, let 
$$d_{k}:=\frac{k+(-k\mod 3)}{3k}.$$
Here, we take the convention that $-k\mod 3$ represents $(-k)\mod 3$, we will always write $-(k\mod 3)$ to represent the alternative. In particular,
$$d_{k}=\begin{cases}\frac{1}{3}\mbox{ if }k=0\mod 3,\\
\frac{k+2}{3k}\mbox{ if }k=1 \mod 3,\\
 \frac{k+1}{3k} \mbox{ if }k=2\mod 3.
\end{cases}$$

In the case that $k=0\mod 3,$ $d_{k}=1\slash 3$, which is known to be the sharp threshold for Property (T) in the cases that $k=3$ \cite{Zuk,kotowski} and $k=6$ \cite{odrzygozdz2019bent}. The remaining cases are not known to be sharp.
 
Below, we analyse Property (T) in the $k$-angular model. We believe this to be the first non-trivial result on Property (T) in any $k$-angular model for any $k\geq 5$ not divisible by $3$, and in fact provides a non-trivial range of densities where random $k$-angular groups are infinite and have Property (T) for each $k\geq 8$. There is currently no known density for random $k$-angular groups to be infinite with Property (T) for $k=4,5,7$.
\begin{theoremalph}\label{mainthm: property t k-angular model}
Let $k\geq 8$, let $d>d_{k}$, and let $\Gamma_{m}\sim \Gamma (m,k,d)$. Then $$\lim_{m\rightarrow\infty }\mathbb{P}(\Gamma_{m}\mbox{ has Property }(T))=1.$$
\end{theoremalph}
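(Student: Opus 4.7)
The plan is to apply the \.{Z}uk--Garland spectral criterion for Property (T): it suffices to show that the link of every vertex in the Cayley $2$-complex of $\Gamma_m$ has second-largest eigenvalue (of the normalised adjacency operator) strictly less than $1/2$. By vertex-transitivity of the $\Gamma_m$-action, all such links are isomorphic to a single graph $\mathcal{L}_m$ on the $2m$ vertices $A_m\cup A_m^{-1}$, and each relator $r=x_1\cdots x_k\in R$ contributes the $k$ cyclic edges $\{x_i^{-1},x_{i+1}\}$ to $\mathcal{L}_m$. The task is then to bound the second eigenvalue of $\mathcal{L}_m$ a.a.s.\ as $m\to\infty$.

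For $k\equiv 0\pmod 3$, the threshold $d_k$ equals $1/3$ and the conclusion follows from \cite{Zuk,kotowski} at $k=3$ combined with Montee's extension \cite{Montee_prop_t} to $3l$-angular models, so the genuinely new content is the case $k\not\equiv 0\pmod 3$, where $d_k>1/3$. In that case I would estimate the second eigenvalue of $\mathcal{L}_m$ by a trace-method argument: bound $\mathbb{E}[\mathrm{tr}(A^{2\ell})]$ (with $A$ the adjacency operator) by counting closed walks of length $2\ell$ in $\mathcal{L}_m$ arising from concatenations of corners of random relators, take $\ell=\ell(m)$ tending to infinity at a suitable rate, and promote the moment bound to an a.a.s.\ spectral gap via a concentration argument.

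The walk-counting divides naturally by $k\bmod 3$. For $k\not\equiv 0\pmod 3$ a cyclic $k$-gon does not partition into three arcs of equal length; the natural substitute is the partition into arcs of lengths $(\lfloor k/3\rfloor,\lfloor k/3\rfloor,\lceil k/3\rceil)$ for $k\equiv 1$ and $(\lfloor k/3\rfloor,\lceil k/3\rceil,\lceil k/3\rceil)$ for $k\equiv 2$. Carrying this asymmetry through the trace computation introduces an additional $(2m-1)^{((-k)\bmod 3)/3}$ factor into the dominant eigenvalue estimate, and balancing this against the edge count $k|R|=k(2m-1)^{kd}$ forces exactly the shifted threshold $d_k=(k+(-k\bmod 3))/(3k)$.

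The main obstacle is the trace estimate in the asymmetric regime. The symmetric equipartition of each relator into three equal arcs of length $k/3$ is what drives the clean Kotowski--Kotowski--Montee bound, and it is unavailable here. I would need to control how the ``stray'' edges at the boundary of the uneven arc affect the enumeration of closed walks --- without losing more than $((-k)\bmod 3)/(3k)$ in the final density --- and I expect the cases $k\equiv 1$ and $k\equiv 2\pmod 3$ to require parallel but separate combinatorial treatments, each producing the corresponding form of $d_k$.
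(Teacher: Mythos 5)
There is a genuine gap at the very first step: the spectral criterion you invoke is not valid for presentations whose relators have length $k>3$. \.{Z}uk's criterion (Theorem \ref{thm: Zuk's criterion}) with threshold $1\slash 2$ applies to the link of a \emph{triangular} presentation, i.e.\ one in which every $2$-cell is a triangle. For the Cayley $2$-complex of $\Gamma_m\sim\Gamma(m,k,d)$ the $2$-cells are $k$-gons, and the graph $\mathcal{L}_m$ on $A_m\cup A_m^{-1}$ you describe carries no such criterion. One can see concretely that your proposed implication must be false: $\mathcal{L}_m$ has $2m$ vertices and $k(2m-1)^{kd}$ edges, so for any fixed $d$ with $kd>2$ it is an extremely dense multigraph with nearly uniform edge multiplicities, and its normalised second eigenvalue tends to $0$ as $m\to\infty$ \emph{for every such density} --- in particular at densities $d<5\slash 24$, where random $k$-angular groups are known \emph{not} to have Property (T) (\cite{mackay_przytycki2015balanced}, \cite{odrzygozdz2019bent}). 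So a criterion of the form ``$\mu_2(\mathcal{L}_m)<1\slash 2$ implies (T)'' cannot hold. The correct object --- and the content of Section \ref{sec: a spectral criterion for Property (T)} --- is the graph $\Delta_k$ whose \emph{vertices are reduced words of length} $\lfloor k\slash 3\rfloor$ and $\lceil k\slash 3\rceil$ (about $(2m-1)^{k/3}$ of them), obtained by regarding each relator as a product of three subwords and passing to the triangular presentation $\langle\mathcal{W}\mid T\rangle$ of a group that virtually surjects onto $\Gamma_m$; only then does Lemma \ref{lem: spectral criterion for Property (T)} give ``$\lambda_1(\Delta_k)>1\slash 2\Rightarrow$ (T)''. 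Your later remarks about arcs of lengths $\lfloor k\slash 3\rfloor$ and $\lceil k\slash 3\rceil$ suggest you are partly thinking of this graph, but the criterion as you state it is attached to the wrong complex.

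A secondary point: even with the correct graph $\Delta_k$, a direct trace-method computation on the whole graph is not how the argument goes, and would be delicate because $\Delta_k$ is an inhomogeneous union of one ``square'' piece and two bipartite pieces on vertex classes of different sizes, with forbidden edges coming from cyclic reduction. The paper instead decomposes $\Delta_k=\Sigma_1\cup\Sigma_2\cup\Sigma_3$, compares each $\Sigma_i$ with a reduced Erd\"os--R\'enyi (bipartite) random graph whose spectral gap is known (Lemmas \ref{lem: eigenvalue of red} and \ref{lem: eigenvalue of bired}), extracts regular spanning subgraphs via the Ore--Reyser theorem (Theorem \ref{thm: regular factor in random bipartite graphs}), and then applies a deterministic eigenvalue bound for unions of regular and biregular graphs (Lemma \ref{thm: eigenvalue of union of three graphs}). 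The threshold $d_k$ does not come out of an asymmetric walk count but from the requirement that the sparser vertex class, of size about $(2m-1)^{\lceil k/3\rceil}=(2m-1)^{(k+(-k\bmod 3))/3}$, receive enough incident relators for the relevant random graphs to be connected and almost regular. If you want to salvage a moment-method route you would need to run it on $\Delta_k$, not on $\mathcal{L}_m$, and handle the union structure and the non-independence caused by cyclic reduction.
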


Secondly, we can consider the density model. Again, there is some ambiguity between the strict model and the lax model in the literature. Indeed, many cubulation results, such as those of \cite{Ollivier-Wise} and \cite{mackay_przytycki2015balanced}, refer to groups in the strict model, whilst results on Property (T) typically refer to groups in the lax model. In particular, the following result is due to \.{Z}uk \cite{Zuk} and Kotowski--Kotowski \cite{kotowski} (see \cite{antoniuktriangle} for finer analysis of $\Gamma (n,3,d)$ as $d\rightarrow 1\slash 3$). There is an alternative proof of the below in \cite[Corollary $12.7$]{DRUTU_Mackay}.
\begin{theorem*}\cite{Zuk,kotowski}
Fix $n\geq 2$, let $d>1\slash 3$, and let $\Gamma_{k}\sim \Gamma (n,3k,d)$. Then $$\lim_{k\rightarrow\infty }\mathbb{P}(\Gamma_{k}\mbox{ has Property }(T))=1.$$
\end{theorem*}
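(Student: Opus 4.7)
My plan is to apply the Garland--\.{Z}uk spectral criterion for Property (T): if $X$ is a simply connected $2$-complex and every vertex link $\mathrm{lk}(v,X)$ is connected with first nontrivial normalized Laplacian eigenvalue $\lambda_1 > 1/2$, then any group acting cocompactly on $X$ has Property (T). Applied to (a suitable modification of) the presentation $2$-complex of $\Gamma_{k}$, this reduces the problem to showing that a certain random graph built from the relators has a large spectral gap with probability tending to $1$.

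Because relators have length $3k$ rather than $3$, the presentation complex is not triangular, so I would first reparametrise $\Gamma_{k}$ using the larger generating set $B_{k}$ of reduced words of length $k$ in $F_{n}$, of size $N := 2n(2n-1)^{k-1}$. Each cyclically reduced relator $r \in R$ factors uniquely as $r = u_{1} u_{2} u_{3}$ with $u_{i} \in B_{k}$, and becomes the triangular relation $u_{1} u_{2} u_{3} = 1$ in this auxiliary presentation. At the identity, the link is then a graph $L_{k}$ on vertex set $B_{k} \sqcup B_{k}^{-1}$ in which each random relator contributes three edges, one per cyclic corner of the corresponding triangle.

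The heart of the argument is to prove that $L_{k}$ is connected with $\lambda_{1}(L_{k})>1/2$ asymptotically almost surely as $k\to\infty$. With $|R| = (2n-1)^{3kd} \approx N^{3d}$ triangles and roughly $2N$ vertices, the mean degree in $L_{k}$ is of order $N^{3d-1}$, which tends to infinity precisely when $d > 1/3$. For a uniformly random graph with such growing mean degree, the non-trivial eigenvalues of the normalized Laplacian concentrate at $1$; making this quantitative via Chernoff bounds on the adjacency entries together with a trace/moment computation should yield $\lambda_{1}(L_{k}) > 1/2$ with probability tending to $1$. Translation invariance of the distribution of $R$ then transports the estimate from the identity link to every other vertex link of the universal cover.

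The main obstacle is the spectral estimate itself. Edges contributed by a single relator are correlated in threes, and distinct relators may share common length-$k$ subwords, which can seed local irregularities in $L_{k}$ and threaten to pull $\lambda_{1}$ below the threshold. Controlling these correlations---and ruling out small dense subgraphs that would decouple the link---requires careful combinatorial bookkeeping in the trace expansion, in the spirit of Kotowski--Kotowski. A secondary delicate point is that the statement concerns the strict model, whereas much of the spectral machinery in the original references is formulated for the lax model; a concentration argument comparing the distribution of exact length-$3k$ relators with those in a nearby window $3k \pm o(k)$ should bridge this gap without altering the threshold $d>1/3$.
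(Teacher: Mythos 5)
Your architecture is essentially the one this paper uses to prove Theorem \ref{mainthm: property t density model} (which recovers the stated result); note, though, that it is \emph{not} the route of \cite{Zuk,kotowski} themselves, who pass through the triangular model $\Gamma(m,3,d)$ with $m\to\infty$ via the permutation model and then construct surjections onto finite-index subgroups. Like the paper, you triangularise by treating length-$k$ words as generators and invoke the \.{Z}uk/Ballmann--\'{S}wi\k{a}tkowski criterion for the resulting link graph (this is exactly $\Delta_{3k}(A_{n}\,\vert\,R)$ and Lemma \ref{lem: spectral criterion for Property (T)}). Where you diverge is in the execution of the spectral estimate, which is also the part you leave unproved. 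You propose a direct trace/moment computation on the whole link graph and correctly identify the in-relator correlations as the main obstacle; the paper sidesteps that obstacle by splitting the link graph as $\Sigma_{1}\cup\Sigma_{2}\cup\Sigma_{3}$, one graph per corner of the triangle, so each relator contributes a single nearly independent edge to each piece. Each $\Sigma_{i}$ then has the marginal law of a \emph{reduced} Erd\"os--R\'enyi graph (some edges are forbidden by cyclic reducedness --- a feature your sketch does not address), whose gap is obtained by comparison with a genuine $G(m,p)$ (Lemmas \ref{lem: eigenvalue of red}, \ref{lem: eigenvalue of bired}); regular spanning subgraphs are extracted via Shamir--Upfal and Theorem \ref{thm: regular factor in random bipartite graphs}, multiple edges are controlled by Lemma \ref{lem: double edges form a matching}, and \.{Z}uk's union-of-three-regular-graphs bound (Lemma \ref{lem: eigenvalue of union of three graphs on same vertex set}) assembles the pieces. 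Your trace-method alternative is plausible in spirit (it is closer to what \cite{kotowski} do in the permutation model) but all of the work lies precisely there.

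Two further points need care. First, the length-$k$ words generate only a finite-index subgroup of $F_{n}$ (Lemma \ref{lem: finite index subgroups of free groups}), so the triangular presentation surjects onto a finite-index subgroup of $\Gamma_{k}$ rather than onto $\Gamma_{k}$; this is harmless because Property (T) passes to finite-index overgroups, but it must be stated. Second, your closing paragraph about bridging the strict and lax models is unnecessary: the statement concerns the strict model, and the spectral argument runs in it directly --- the paper passes through the binomial model $\Gamma_{p}(n,k,p)$ and monotonicity of (T) under adding relators (Theorem \ref{thm: Gp has Property (T)}), not through any length window.
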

Note that the above results \emph{only apply to groups whose relator length is divisible by $3$}. However, this result has two important consequences: firstly it provides an infinite number of hyperbolic torsion free groups with Property (T), since such groups are torsion free with probability tending to $1$, and the Euler characteristic of such a group is dependent only on $k$ and $d$ \cite{olliviersharp}. Secondly, it proves that groups in the $\Gamma_{lax}(n,k,d,f)$ model have Property (T), using the following argument; see \cite[I.2.c]{Ollivier_Jan_Invitation}. This step is noted in \cite[p. 410]{kotowski}.
\begin{remarkit*}
Fix $n\geq 2$, let $d>0$, and let $k_{i}$ be a sequence of increasing integers such that $\vert k_{i+1}-k_{i}\vert$ is uniformly bounded. If

$$\lim_{k_{i}\rightarrow\infty }\mathbb{P}(\Gamma\sim \Gamma(n,k_{i},d)\mbox{ has Property (T)})=1,$$ then there exists a constant function $f$ such that for any $d'>d$, 

$$\lim_{l\rightarrow\infty }\mathbb{P}(\Gamma\sim \Gamma_{lax}(n,l,d',f)\mbox{ has Property }(T))=1.$$
\end{remarkit*}
\begin{proof}
Let $C=\max_{i}\vert k_{i+1}-k_{i}\vert$, and choose $f=f(l)$ such that $f(l)\geq C$. For each $l$ choose $k_{i(l)}$ such that $l+f(l)-C\leq k_{i(l)}\leq l+f(l)$. Then for sufficiently large $l$, and for $\Gamma_{l}=\langle A_{n}\;\vert\;R\rangle \sim\Gamma_{lax}(n,l,d',f)$, we see that for any $d<d''<d'$, with probability tending to $1$ as $l$ tends to infinity, $$\vert R\cap \mathcal{C}(n,k_{i(l)})\vert\geq (2n-1)^{d''k_{i(l)}}.$$ Hence, by choosing a random subset $R'\subseteq R\cap \mathcal{C}(n,k_{i(l)})$ of size $(2n-1)^{kd}$, and setting $\Gamma_{i(l)} ':=\langle A_{n}\;\vert\;R'\rangle,$ we see that there exists an epimorphism $\Gamma'_{i(l)} \twoheadrightarrow\Gamma_{l},$ and $\Gamma'_{i(l)}\sim\Gamma (n,k_{i(l)},d).$ Since $\Gamma_{i(l)}'$ has Property (T) with probability tending to $1$ as $i(l)$ tends to infinity, and Property (T) is preserved by epimorphisms, the result follows.
\end{proof}

However, we note that the question of Property (T) remains open for the strict model. If $\lim_{k\rightarrow\infty }\mathbb{P}(\Gamma\sim \Gamma(n,k,d)\mbox{ has Property (T)})=1,$ then we must also have that 
$\lim_{p_{i}\rightarrow\infty }\mathbb{P}(\Gamma\sim \Gamma(n,p_{i},d)\mbox{ has Property (T)})=1,$
where $p_{i}$ denotes the $i^{th}$ prime. Since the results of \cite{Zuk,kotowski} do not apply in this regime, we are inspired to further analyse the question of Property (T) for $\Gamma (n,k,d).$

We now briefly explain the approach taken by \cite{Zuk,kotowski} to prove their theorem. Firstly, one takes $n\geq 2$, $d>1\slash 3$, and considers $\Gamma_{m}\sim \Gamma (m,3,d)$. It can then be proved that 
$$\lim_{m\rightarrow\infty }\mathbb{P}(\Gamma_{m}\mbox{ has Property (T)})=1.$$

The proof of the above is very involved, and requires passing via an alternate model, the \emph{permutation model}: we omit the definition of this model as we do not require it.

One fixes $d'>d$, and finds for each $k$ an integer $m(k,n)$ and a surjection $\Gamma_{m(k,n)}\twoheadrightarrow \Gamma'_{k}$, where $\Gamma_{k}'\sim \Gamma (n,3k(m,n),d')$ (technically this is a surjection onto a finite index subgroup of $\Gamma_{k}'$). The result then follows by preservation of Property (T) under epimorphisms and taking finite index extensions. 

A natural approach to extend the results to the strict model using the techniques of \.{Z}uk and Kotowski--Kotowski would be to to fix $l\geq 3$, let $\Gamma_{(m,l)}\sim \Gamma (m,l,d)$, and consider $m\rightarrow\infty$. Then for each $n\geq 2$ and $k\geq 3$ find an integer $m(k,l,n)$ and $$\Gamma '_{k}\sim \Gamma (n,lk,d')$$ with $\Gamma_{(m(k,l,n),l)}\twoheadrightarrow\Gamma'_{k(m,n,l)}$, as in \cite{Montee_prop_t}. However, if we consider the model $\Gamma(n,p_{i},d')$, then we must have in the above that $lk=p_{k}$ where $p_{k}$ is the $k^{th}$ prime number, which necessarily forces $m(k,l,n)=n$, and therefore we cannot use statements of the form $\lim_{m\rightarrow\infty }\mathbb{P}(\Gamma_{m}\mbox{ has Property (T)}),$ as $m$ must be bounded.

To address this, we therefore must deal with the model $\Gamma(n,k,d)$ directly. The approach is to use the work of  Ballmann--\'{S}wi\k{a}tkowski \cite{Ballmann-Swiatkowski} and \.{Z}uk \cite{zuk1996} (c.f. \cite{Zuk}), in which a spectral condition for Property (T) was provided independently.
This will be used to provide an alternate criterion for Property (T) in terms of the first eigenvalue of a graph we define relative to $\Gamma$, $\Delta_{k}(\Gamma)$. A similar graph was used in the case of $k=0\mod 3$ by Drutu--Mackay \cite{DRUTU_Mackay}. The bulk of this text then analyses the eigenvalues of these random graphs.

The following completes the analysis of Property (T) in $\Gamma (n,k,d)$ for $d>1\slash 3.$
\begin{theoremalph}\label{mainthm: property t density model}
Let $n\geq 2$, $d>1\slash 3$, and let $\Gamma_{k}\sim \Gamma (n,k,d)$. Then: 
$$\lim_{k\rightarrow\infty }\mathbb{P}(\Gamma_{k}\mbox{ has Property }(T))=1.$$
\end{theoremalph}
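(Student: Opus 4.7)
The plan is to deploy the Ballmann--\'{S}wi\k{a}tkowski/\.{Z}uk spectral criterion for Property (T): if every vertex link in an associated $2$-dimensional complex is connected and has first non-zero normalized-Laplacian eigenvalue strictly greater than $1/2$, then $\Gamma$ has Property (T). The first step is to introduce the graph $\Delta_{k}(\Gamma)$ alluded to in the introduction. Following Drutu--Mackay's treatment of the case $3\mid k$, this should be the link at the unique vertex of the presentation $2$-complex obtained by subdividing each length-$k$ relator into three sub-words whose lengths are as close to $k/3$ as possible; its vertex set then consists of reduced words of length $\lceil k/3\rceil$ or $\lfloor k/3\rfloor$ in $A_{n}\cup A_{n}^{-1}$, and each relator $r\in R$ contributes a triangle among the three sub-words into which it decomposes.

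Once this construction is in place, the proof reduces to showing that for $d>1/3$, the random graph $\Delta_{k}(\Gamma_{k})$ a.a.s.\ has first non-zero eigenvalue exceeding $1/2$. The scales are favourable: there are $\approx(2n-1)^{k/3}$ vertices but $(2n-1)^{kd}$ relators, so for $d>1/3$ the expected number of triangles through each vertex grows exponentially in $k(d-1/3)$. Standard random-graph techniques, either a trace-method estimate bounding all non-trivial eigenvalues, or a Chernoff bound combined with a Courant--Fischer comparison against the ``complete'' analogue of $\Delta_{k}$, should then push $\lambda_{1}$ arbitrarily close to its maximal value, well above $1/2$.

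The main obstacle is dealing with the inhomogeneity introduced by $k\bmod 3$: when $3\nmid k$, the three sub-words into which a relator is cut have unequal lengths, so the two ``types'' of vertex in $\Delta_{k}(\Gamma)$ have different typical degrees and the symmetries exploited in the $3\mid k$ case are lost. One therefore has to set up a weighted or bipartite version of the spectral criterion and verify that the eigenvalue estimate is robust to this asymmetry. Fortunately, since $n$ is fixed while $k\to\infty$, the $-k\bmod 3$ imbalance contributes only a correction of relative size $O(1/k)$ to the effective density, and the $d>1/3$ threshold is recovered in the limit. A secondary, more standard, difficulty is that the set $R$ is chosen uniformly from $\mathcal{C}(n,k)$ rather than via independent Bernoulli trials; this is handled by a coupling to a Poissonised model followed by Chernoff concentration, exactly as in the analogous arguments of \cite{kotowski}.
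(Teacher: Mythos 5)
Your overall strategy --- the Ballmann--\'{S}wi\k{a}tkowski/\.{Z}uk spectral criterion applied to a graph $\Delta_{k}$ built from the three sub-words of each relator, plus random-graph eigenvalue estimates and a coupling between the uniform and Bernoulli models --- is indeed the paper's strategy. But there is a genuine gap exactly at the point you dismiss as a vanishing correction. When $3\nmid k$ the two vertex classes of $\Delta_{k}$ have sizes of order $(2n-1)^{(k-1)/3}$ and $(2n-1)^{(k+2)/3}$ (or $(k-2)/3$ and $(k+1)/3$): their ratio is $2n-1$, a \emph{constant}, not $1+O(1/k)$, and the typical degrees in the two classes likewise differ by a constant factor, since a vertex in one class receives edges from all three edge-families per relator while a vertex in the other receives edges from only two. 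So $\Delta_{k}$ is persistently non-regular: it is the union of a near-regular graph $\Sigma_{2}$ supported on one class and two biregular bipartite graphs $\Sigma_{1},\Sigma_{3}$ joining the two classes. A trace-method or Courant--Fischer bound for a single homogeneous model does not apply to such a union; what is needed is a quantitative lower bound on $\lambda_{1}$ of a union of a $2d_{1}$-regular graph and two $(d_{1},d_{2})$-regular bipartite graphs on overlapping vertex sets. This is the paper's Lemma \ref{thm: eigenvalue of union of three graphs}, proved by an explicit eigenvector decomposition with weights $1/2$, $1/(2\sqrt{2})$, $1/(2\sqrt{2})$ dictated by the degree imbalance, and it is the technical heart of the argument; nothing in your sketch supplies it. Moreover the conclusion is not that $\lambda_{1}$ is ``arbitrarily close to its maximal value'': the bipartite pieces are only shown to satisfy $\lambda_{1}\geq 1-1/(2n-1)-o_{k}(1)$, a deficit that does not vanish as $k\to\infty$ since $n$ is fixed, and the union lemma converts this into $\lambda_{1}(\Delta_{k})\geq 3/4$ --- above $1/2$ only because of the specific constants involved.

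Two further steps are missing. First, the spectral criterion itself must be justified when $3\nmid k$: the paper does this not by a ``weighted version'' of \.{Z}uk's criterion (which you leave unspecified) but by re-presenting a finite-index subgroup of $\Gamma$ on the generating set $\mathcal{W}(n,l_{k})\sqcup\mathcal{W}(n,L_{k})$, using Lemma \ref{lem: finite index subgroups of free groups} to see that a sphere in $F_{n}$ generates a finite-index subgroup, and then invoking preservation of (T) under epimorphisms and finite-index overgroups (Lemma \ref{lem: spectral criterion for Property (T)}). Second, because relators are cyclically reduced, a sub-word can never be followed by one beginning with the inverse of its last letter, so the $\Sigma_{i}$ are not marginally Erd\"os--R\'enyi but \emph{reduced} random graphs in which a constant fraction $\approx 1/(2n-1)$ of pairs is forbidden; this is precisely the source of the non-vanishing spectral deficit above, and it, together with the control of multiple edges, occupies Sections \ref{sec: Spectral theory of unions of reduced random graphs} and \ref{sec: Property (T) in random quotients of free groups} of the paper. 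Your sketch treats the graphs as homogeneous Erd\"os--R\'enyi graphs and therefore does not reach the stated conclusion.
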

Note that this immediately implies for any infinite sequence, $\{k_{i}\}_{i}$, of increasing positive integers, and $\Gamma_{i}\sim \Gamma (n,k_{i},d)$ that: 
$$\lim_{i\rightarrow\infty }\mathbb{P}(\Gamma_{i}\mbox{ has Property }(T)),$$
so that we immediately recover the results of \cite{Zuk, kotowski}.

We note that we could also consider the case of $d\rightarrow 1\slash 3$ in a manner similar to that of \cite{antoniuktriangle}. For $n\geq 2$, $k \geq 3$, and $0<p<1$, we can define the random group model $\Gamma_{p}(n,k,p)$: let $\Gamma =\langle A_{n}\;\vert\;R\rangle,$ where $R$ is obtained by adding each word in $\mathcal{C}(n,k)$ with probability $p$. Since Property (T) is an increasing property (one preserved by epimorphisms), it is easy to switch between $\Gamma_{p}(n,k,p)$ and $\Gamma (n,k,(2n-1)^{k}p)$ in a manner analogous to switching between the Erd\"os--R\'enyi random graph $G(m,p)$ and the random graph $G(m,M)$, since the number of relators in $R$ is $\vert R\vert =(1+o(1))(2n-1)^{k}p$ almost surely, for $p$ sufficiently large. In fact, we do analyse Property (T) in $\Gamma_{p}(n,k,p)$ in Theorem \ref{thm: Gp has Property (T)}, and then use this to prove Theorems \ref{mainthm: property t k-angular model} and \ref{mainthm: property t density model}. However, we believe that the notation and constants involved in the statement of Theorem \ref{thm: Gp has Property (T)} add unnecessary complexity to the statement of Theorem \ref{mainthm: property t density model}, and so we leave this to Section \ref{sec: Property (T) in random quotients of free groups}.

Indeed, random groups exhibit many interesting properties, depending on the density chosen. All of the following statements hold asymptotically almost surely, i.e. with probability tending to $1$. Firstly, a random group in the density model at density $d<1\slash 2$ is hyperbolic and torsion-free \cite{gromovasymptotic} (c.f. \cite{olliviersharp}). This argument also transfers to the $k$-angular model \cite{ashcroftrandom}: see \cite{odrsquaremodel} for the case of $k=4$, as well as a generalisation of the argument to a wider class of diagrams.
In the opposite direction to Property (T), there are many results known about the lack of Property (T) in various models of random groups. Groups in the density model are virtually special for $d<1\slash 6$ \cite{Ollivier-Wise} and contain a free codimension-$1$ subgroup for $d<5\slash 24$ \cite{mackay_przytycki2015balanced}. As observed in \cite{odrzygozdz2019bent} this implies that for any $k\geq 3$, a random group in the $k$-angular model at density $d<5\slash 24$ does not have Property (T). Groups in the triangular model are free at densities less than $1\slash 3$ \cite{antoniuktriangle}, groups in the square model are free at densities less than $1\slash 4$ \cite{odrsquaremodel}, and groups in the $k$-angular model are free for $d<1\slash k$ \cite{ashcroftrandom}. Furthermore, groups in the square model are virtually special for $d<1\slash 3$ \cite{duong,odrcubulatingsquare} and contain a codimension-$1$ subgroup for $d<3\slash 8$ \cite{odrzygozdz2019bent}. Finally, groups in the hexagonal model contain a codimension-$1$ subgroup for $d<1\slash 3$ and have Property (T) for $d>1\slash 3$ \cite{odrzygozdz2019bent}.

\subsection{Structure of the paper and some notation}
The idea of the proof is the following: for a finitely presented group $\Gamma$ we find a graph $\Delta (\Gamma)$, and using work of \cite{zuk1996}, \cite{Ballmann-Swiatkowski}, we prove that if $\lambda_{1}(\Delta(\Gamma))>1\slash 2$, then $\Gamma$ has Property (T). This graph loosely corresponds to the `link of depth $k\slash 3$' of the presentation complex for $\Gamma$. For random groups this graph $\Delta (\Gamma)$ can be written as the union of a graph $\Sigma_{2}$ and two bipartite graphs $\Sigma_{1},\Sigma_{3}. $ If we allowed all freely reduced words as relators, then these graphs would have the marginal distributions of Erd\"os--R\'enyi random graphs. Since we restrict to only having cyclically reduced words as relators, these graphs will not allow some edges, and so will have the marginal distributions of \emph{reduced random graphs}. We need to analyse the eigenvalues of these graphs, and then prove the union of these graphs has high eigenvalue with large probability.

The paper is structured as follows. Sections \ref{sec: graph defs} introduces some relevant graph theoretic definitions, and in Section \ref{sec: a spectral criterion for Property (T)} we provide a spectral criterion for Property (T), related to the graph $\Delta_{k}$. Sections \ref{sec: random graphs} and \ref{sec: Spectral theory of unions of reduced random graphs} are more geared towards graph theory, and allow us to analyse the eigenvalues of specific random graphs. In Section \ref{sec: Property (T) in random quotients of free groups} we apply these results prove the main theorems of this text.

We now briefly discuss some notation and assumptions. We are dealing with asymptotics, and so we frequently arrive at situations where $m$ is some parameter tending to infinity that is required to be an integer: if $m$ is not integer, we will implicitly replace it by $\lfloor m\rfloor$. Since we are dealing with asymptotics, this does not affect any of our arguments. 
\begin{definition}
Given $m_{1}:\mathbb{N}\rightarrow \mathbb{N}$ a function such that $m_{1}(n)\rightarrow\infty$ as $n\rightarrow\infty$, we write $m_{2}=m_{2}(m_{1}(n))$ to mean that $m_{2}(n)=f(m_{1}(n))$ for some function $f$, and $f(m_{1}(n))\rightarrow\infty$ as $n\rightarrow\infty$, i.e. $m_{2}$ only depends on $m_{1}$, and tends to infinity as $m_{1}$ tends to infinity.\end{definition}
The following are standard.
\begin{definition}
   Let $f,g:\mathbb{N}\rightarrow \mathbb{R}_{+}$ be two functions. We write 
   \begin{enumerate}
       \item $f=o(g)$ if $f(m)\slash g(m)\rightarrow 0$ as $m\rightarrow\infty$,
     
       \item $f=O(g)$ if there exists a constant $N\geq 0$ and $M\geq 1$ such that\\$f(m)\leq N g(m)$ for all $m\geq M$,
       \item and $f=\Omega (g)$ if $g=o(f)$.
   \end{enumerate}
   We write $f=o_{m}(g)$ etc to indicate that the variable name is $m$.
   
  Note that typically we will deal with functions $m_{2}=m_{2}(m_{1})$, and $f=f(m_{1},m_{2})$. We will write $f=o_{m_{1}}(g)$ etc to mean that the function $f'(m_{1})=f(m_{1},m_{2}(m_{1}))=o_{m_{1}}(g'(m_{1})),$ where $g'(m_{1})=g(m_{1},m_{2}(m_{1}))$.
\end{definition}

\begin{definition}
Let $\mathcal{M}(m)$ be some model of random groups (or graphs) depending on a parameter $m$, and let $\mathcal{P}$ be a property of groups (or graphs). We say that $\mathcal{P}$ holds \emph{asymptotically almost surely with} $m$ (\emph{a.a.s.}$(m)$) if 
$$\lim_{m\rightarrow\infty}\mathbb{P}(G\sim \mathcal{M}(m)\mbox{ has }\mathcal{P})=1.$$

Again, typically we will have deal with cases where $m_{2}=m_{2}(m_{1})$ is fixed, $\mathcal{M}(m_{1},m_{2})$ is some model of random groups (or graphs) depending on parameters $m_{1}$ and $m_{2}$, and $\mathcal{P}$ is a property of groups (or graphs). We say that $\mathcal{P}$ holds \emph{asymptotically almost surely with} $(m_{1})$ (\emph{a.a.s.}$(m_{1})$) if 
$$\lim_{m_{1}\rightarrow\infty }\mathbb{P}(G\sim \mathcal{M}(m_{1},m_{2}(m_{1}))\mbox{ has }\mathcal{P})=1.$$
Typically we will only use the above in proofs or in the statements of auxilliary technical lemmas.
\end{definition}
Finally, we will often be working with bipartite graphs: the vertex partition of a bipartite graph $G$ will always be written $V(G)=V_{1}(G)\sqcup V_{2}(G)$.
\section*{Funding}
Engineering and Physical Sciences Research Council Studentship 2114468.	
\section*{Acknowledgements}
I would like to thank my supervisor Henry Wilton for his support and guidance, for suggesting the study of Property (T) in random groups, and for many useful discussions. I would like to thank the anonymous referee for their detailed and helpful suggestions.

\section{Graphs and eigenvalues}\label{sec: graph defs}
In this short section we provide some definitions that will be central throughout.
\begin{definition}
A \emph{multiset} $M$ is a pair $M=(A,\mu_{M})$ where $A$ is a set and $\mu:A\rightarrow \mathbb{N}$ is a set function. We call $A$ the \emph{universe} of $M$ and $\mu_{M}$ its \emph{multiplicity}. Typically, in an abuse of notation, we write $M=(M,\mu_{M})$ to be a multiset, where we view $M$ as both the underlying universe, and the multiset.

Let $M=(A,\mu),N=(B,\nu)$ be multisets. The \emph{sum} of $M$ and $N$ is the multiset defined by $$M\uplus N:=(A\cup B,\mu+\nu),$$
where we extend $\mu\vert_{B\setminus A}:=0,\; \nu\vert_{A\setminus B}:=0.$
\end{definition}
\begin{definition}
A graph is a pair $G=(V,E)$, where $V$ is the \emph{set of vertices}, and $E=(E,\mu_{E})$ is a multiset. We typically refer to $E$ as the \emph{set of edges}, which consists of unordered pairs of the form $\{u,v\}$, for $u,v\in V$. Note that here we allow pairs $\{u,u\}$.
An edge $\{u,v\}$ is said to \emph{join} the vertices $u$ and $v$. We refer to $\mu_{E}(\{u,v\})$ as the \emph{number of edges} joining $u$ and $v$.
\end{definition}

\begin{definition}
Let $G=(V,E)$, $G'=(V',E')$. 
The \emph{union} of $G$ and $G'$ is the graph $G\cup G':=(V\cup V',E\uplus E')$.
\end{definition}

Let $G=(V,E)$ be a graph with vertex set $V=\{v_{1},\hdots ,v_{m}\}$. The \emph{adjacency matrix} of $G$, $A(G)$, is the $m\times m$ matrix with $A(G)_{i,j}$ defined to be the number of edges between $v_{i}$ and $v_{j}$, i.e. $A(G)_{i,j}=\mu_{E}(\{v_{i},v_{j}\})$. The \emph{degree matrix} of $G$, $D(G)$, is the diagonal matrix with entries $D(G)_{i,i}=deg(v_{i}):=\sum_{v_{j}}\mu_{E}(\{v_{i},v_{j}\})$. The \emph{Laplacian} of $G$, $L(G)$, is defined by $$L(G)=I-D^{-1\slash 2}AD^{-1\slash 2}.$$
We note that $L(G)$ is symmetric positive semi-definite, with eigenvalues $$0\leq \lambda_{0}(L(G))\leq \lambda_{1}(L(G))\leq \hdots \leq \lambda_{m-1}(L(G))\leq 2.$$ For $i=1,\hdots, m$, we define $\lambda_{i}(G):=\lambda_{i}(L(G))$.

We note the following lemma, commonly known as Weyl's inequality, which will also be of frequent use. If $A$ is a symmetric real $m\times m$ matrix, then $A$ has real eigenvalues, which we order by $\lambda_{0}(A)\leq \lambda_{1}(A)\leq \hdots \leq \lambda_{m-1}(A)$. We define the reverse ordering of eigenvalues $\mu_{1}(A)\geq \mu_{2}(A)\geq \hdots\geq \mu_{m}(A)$, i.e. $\mu_{i}(A)=\lambda_{m-i}(A)$.

\begin{lemma*}[Weyl's inequality \cite{Weyl}]
Let $A$ and $B$ be symmetric $m\times m$ real matrices. For $i=1,\hdots ,m$: $\mu_{i}(A)+\mu_{m}(B)\leq \mu_{i}(A+B)\leq \mu_{i}(A)+\mu_{1}(B).$
\end{lemma*}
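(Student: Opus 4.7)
The plan is to deduce Weyl's inequality from the Courant--Fischer min--max characterization of the eigenvalues of a real symmetric matrix. Recall that for a symmetric $m\times m$ real matrix $C$, one has
\[
\mu_i(C) = \max_{\substack{S\leq \mathbb{R}^m \\ \dim S = i}}\; \min_{\substack{x\in S \\ \|x\|=1}} \langle Cx, x\rangle,
\]
and in particular $\mu_1(C) = \max_{\|x\|=1}\langle Cx, x\rangle$ and $\mu_m(C) = \min_{\|x\|=1}\langle Cx, x\rangle$. I would take this as the starting point (it is classical, so I would simply cite it).

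For the upper bound, first I would observe that for any unit vector $x\in\mathbb{R}^m$, $\langle Bx, x\rangle \leq \mu_1(B)$. Consequently, for any $i$-dimensional subspace $S\leq\mathbb{R}^m$,
\[
\min_{\substack{x\in S \\ \|x\|=1}}\langle (A+B)x, x\rangle \;\leq\; \min_{\substack{x\in S \\ \|x\|=1}}\langle Ax, x\rangle \;+\; \mu_1(B).
\]
Taking the maximum over $i$-dimensional subspaces $S$ on both sides and applying Courant--Fischer yields $\mu_i(A+B)\leq \mu_i(A)+\mu_1(B)$.

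For the lower bound, I would apply the upper bound already proved with $A$ replaced by $A+B$ and $B$ replaced by $-B$, giving
\[
\mu_i(A) = \mu_i\bigl((A+B)+(-B)\bigr) \;\leq\; \mu_i(A+B) + \mu_1(-B) \;=\; \mu_i(A+B) - \mu_m(B),
\]
since $\mu_1(-B) = -\mu_m(B)$ by negating the eigenvalues of $B$. Rearranging gives $\mu_i(A)+\mu_m(B)\leq \mu_i(A+B)$, completing the proof.

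I do not anticipate any real obstacles: the entire argument is a two-line application of Courant--Fischer followed by a symmetry trick for the lower bound. The only small technical point worth stating cleanly is the observation $\mu_1(-B) = -\mu_m(B)$, which is immediate from the fact that the eigenvalues of $-B$ are the negatives of those of $B$ listed in reverse order.
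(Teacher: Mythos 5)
Your proof is correct. Note, however, that the paper does not actually prove Weyl's inequality; it is stated as a cited classical result (attributed to Weyl) and used as a black box throughout. The argument you give --- the two-line deduction from the Courant--Fischer min--max characterization, with the lower bound obtained from the upper bound by replacing $(A,B)$ with $(A+B,-B)$ and using $\mu_1(-B)=-\mu_m(B)$ --- is the standard textbook proof and is entirely sound. It also meshes naturally with the rest of the paper, which already invokes the Courant--Fischer theorem explicitly in Section 4 and records the eigenvalue-negation identity $\mu_i(-M)=-\mu_{n+1-i}(M)$ as a standalone remark (of which $\mu_1(-B)=-\mu_m(B)$ is the $i=1$ case). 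There is no gap.
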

We also make the following remarks.

\begin{remark}
Let $M$ be a symmetric $n\times n$ matrix. For $i=1,\hdots , n$: $$\mu_{i}(-M)=-\mu_{n+1-i}(M).$$
This follows as $\{\mu_{i}(-M)\;:\;1\leq i\leq n\}=\{-\mu_{i}(M)\;:\;1\leq i\leq n\},$
and $\mu_{1}(M)\geq \mu_{2}(M)\geq \hdots\geq \mu_{n}(M),$ so that $-\mu_{1}(M)\leq -\mu_{2}(M)\leq \hdots\leq -\mu_{m}(M).$
\end{remark}
\begin{remark}\label{rmk: switching between eigenvalues of graphs}
Let $G$ be a graph. For $i=0,\hdots , \vert V(G)\vert -1$:
$$\lambda_{i}(G)=1-\mu_{i+1}(D(G)^{-1\slash 2}A(G)D(G)^{-1\slash 2}),$$
since $L(G)=I-D(G)^{-1\slash 2}A(G)D(G)^{-1\slash 2}$, so that 
$$\{\lambda_{i}(L(G))\;:\;0\leq i\leq \vert V(G)\vert -1\}=\{1-\mu_{j}(D(G)^{-1\slash 2}A(G)D(G)^{-1\slash 2})\;:\;1\leq j\leq \vert V(G)\vert\},$$
and
\begin{equation*}
    \begin{split}
        1-\mu_{1}(D(G)^{-1\slash 2}A(G)D(G)^{-1\slash 2})&\leq 1-\mu_{2}(D(G)^{-1\slash 2}A(G)D(G)^{-1\slash 2})\leq \hdots\\
        \hdots &\leq 1-\mu_{\vert V(G)\vert}(D(G)^{-1\slash 2}A(G)D(G)^{-1\slash 2}).
    \end{split}
\end{equation*}
\end{remark}


\section{A spectral criterion for Property (T)}\label{sec: a spectral criterion for Property (T)}
In this section we deduce a spectral criterion for Property (T): we first remind the reader of some of the relevant definitions. We focus only on finitely generated discrete groups: for a further exposition the reader should see, for example, \cite{bekkadelaharpe}.

Let $\Gamma$ be a finitely generated group with finite generating set $S$, let $\mathcal{H}$ be a Hilbert space, and let $\pi: \Gamma\rightarrow\mathcal{U}(\mathcal{H})$ be a unitary representation of $\Gamma.$ We say that $\pi$ has \emph{almost-invariant vectors} if for every $\epsilon>0$ there is some non-zero $u_{\epsilon}\in\mathcal{H}$ such that for every $s\in S$, $||\pi(s) u_{\epsilon}-u_{\epsilon}||<\epsilon ||u_{\epsilon}||.$
\begin{definition}
We say that $\Gamma$ has \emph{Property (T)} if for every Hilbert space $\mathcal{H}$, and for every unitary representation $\pi:\Gamma\rightarrow\mathcal{U}(\mathcal{H})$ with almost-invariant vectors, there exists a non-zero invariant vector for $\pi$.
\end{definition}
It is standard that the choice of generating set does not matter. We now note the following well known results concerning Property (T): for proofs see, for example, \cite{bekkadelaharpe}. We will use these results implicitly throughout.

\begin{lemma*}
Let $\Gamma$ be a finitely generated group, and let $H$ be a finite index subgroup of $\Gamma$: $\Gamma$ has Property (T) if and only if $H$ has Property (T).
\end{lemma*}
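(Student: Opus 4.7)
The plan is to establish the two implications by distinct methods: the forward direction via an induced representation, and the backward direction by reducing to a normal finite-index subgroup and exploiting finiteness of the quotient.

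For the forward direction, suppose $\Gamma$ has Property (T) and let $\sigma \colon H \to \mathcal{U}(\mathcal{K})$ be a unitary representation with almost-invariant vectors. I would form the induced representation $\pi = \mathrm{Ind}_H^\Gamma \sigma$ on the space of $H$-equivariant functions $\Gamma \to \mathcal{K}$, a Hilbert space isomorphic to a finite orthogonal sum of copies of $\mathcal{K}$ since $[\Gamma : H] < \infty$. The key step is to manufacture almost-invariant vectors for $\pi$: given an $\epsilon$-almost-invariant unit vector $u_\epsilon$ for $\sigma$, define $f_\epsilon$ on a transversal for $H$ in $\Gamma$ to take the constant value $u_\epsilon$ and extend $H$-equivariantly. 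For any generator $s$ of $\Gamma$, left multiplication by $s$ permutes cosets up to elements of $H$ of bounded word length in the Schreier generators, so $\|\pi(s) f_\epsilon - f_\epsilon\|$ is controlled by the almost-invariance of $u_\epsilon$ over a finite subset of $H$. Property (T) of $\Gamma$ then produces a non-zero $\pi$-invariant function $f$, and $f(1_\Gamma) \in \mathcal{K}$ is non-zero and $\sigma$-invariant.

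For the backward direction, I would first apply the already-established forward direction to the inclusion $N \leq H$, where $N = \bigcap_{g \in \Gamma} gHg^{-1}$ is the normal core of $H$ in $\Gamma$. Since $N$ has finite index in $H$, it inherits Property (T), and moreover $N \trianglelefteq \Gamma$ with $[\Gamma : N] < \infty$. Now given $\pi \colon \Gamma \to \mathcal{U}(\mathcal{H})$ with almost-invariant vectors, Schreier's lemma furnishes a finite generating set for $N$ consisting of bounded-length words in the generators of $\Gamma$, so $\pi|_N$ inherits almost-invariant vectors. Property (T) of $N$ then gives $\mathcal{H}^N \neq 0$, and normality of $N$ makes this subspace $\Gamma$-invariant, with the action of $\Gamma$ on $\mathcal{H}^N$ factoring through the finite group $\Gamma/N$. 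The remaining step is to produce a $\Gamma$-invariant vector inside $\mathcal{H}^N$: using the Kazhdan constant for $N$ on the orthogonal complement $(\mathcal{H}^N)^\perp$ (where $N$ has no invariant vector), I would show that the orthogonal projection onto $\mathcal{H}^N$ of a unit almost-invariant vector for $\pi$ is close to the vector itself, hence almost-invariant for the $\Gamma/N$-action on $\mathcal{H}^N$. Averaging over the finite group $\Gamma/N$ then produces a non-zero $\Gamma$-invariant vector in $\mathcal{H}$.

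The main obstacle will be the projection step in the backward direction: verifying that almost-invariant vectors for $\pi$ survive projection to $\mathcal{H}^N$ requires the quantitative form of Property (T), namely the existence of a Kazhdan constant bounding below the spectral gap for representations of $N$ with no invariant vectors, rather than just the qualitative statement in the definition. By contrast, the induced representation construction in the forward direction is largely formal, with the only nuance being the bookkeeping of how generators of $\Gamma$ act on the chosen transversal.
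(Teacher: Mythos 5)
Your argument is correct in outline. Note that the paper does not prove this lemma at all: it is stated as a standard fact with a citation to Bekka--de la Harpe, so there is no ``paper proof'' to compare against; what you have written is essentially the classical textbook argument from that reference. Your forward direction (induction of a representation with almost-invariant vectors, observing that a $\pi$-invariant element of $\mathrm{Ind}_H^\Gamma\sigma$ is a constant function whose value is $\sigma$-invariant) is complete, the only bookkeeping being that the Schreier elements $t s (t')^{-1}$ form a finite subset of $H$, and almost-invariance with respect to one finite generating set yields almost-invariance with respect to any finite subset. Your backward direction via the normal core $N$, the $\pi(\Gamma)$-invariant decomposition $\mathcal{H}=\mathcal{H}^N\oplus(\mathcal{H}^N)^\perp$, and averaging over the finite quotient $\Gamma/N$ is also the standard route; you correctly identify that it needs the quantitative form of Property (T) for $N$ (existence of a Kazhdan pair), which is equivalent to the qualitative definition for finitely generated groups but is itself a separate (standard) lemma you are importing rather than proving. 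Two minor points to make explicit if you write this up fully: the projection $u_0$ of an $\epsilon$-almost-invariant unit vector onto $\mathcal{H}^N$ satisfies $\lVert \pi(g)u_0-u_0\rVert\le\lVert \pi(g)u-u\rVert$ because $\pi(g)$ preserves the orthogonal decomposition, and in the final averaging you need $\lVert\pi(g)u_0-u_0\rVert$ to be small for all coset representatives $g$, not just generators, which follows since the finitely many representatives have bounded word length.
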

\begin{lemma*}
Let $\Gamma$ be a finitely generated group with Property (T) and let $\Gamma'$ be a homomorphic image of $\Gamma.$ Then $\Gamma'$ has Property (T).
\end{lemma*}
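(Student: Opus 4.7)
The plan is a direct pullback argument. Given a surjection $\phi: \Gamma\twoheadrightarrow\Gamma'$ and a unitary representation $\pi':\Gamma'\rightarrow\mathcal{U}(\mathcal{H})$ with almost-invariant vectors, I would form the composition $\pi:=\pi'\circ\phi:\Gamma\rightarrow\mathcal{U}(\mathcal{H})$, transfer the almost-invariance hypothesis from $\pi'$ to $\pi$, invoke Property (T) of $\Gamma$ to produce an honest invariant vector for $\pi$, and finally observe that this same vector is automatically $\pi'$-invariant by surjectivity of $\phi$.

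In detail, first I would fix a finite generating set $S$ of $\Gamma$ and set $S':=\phi(S)$, which is a finite generating set of $\Gamma'$. To push almost-invariance across the composition I would note that $\pi(s)=\pi'(\phi(s))$ for every $s\in S$, so any $u_{\epsilon}\in\mathcal{H}$ satisfying $\|\pi'(s')u_{\epsilon}-u_{\epsilon}\|<\epsilon\|u_{\epsilon}\|$ for all $s'\in S'$ also satisfies $\|\pi(s)u_{\epsilon}-u_{\epsilon}\|<\epsilon\|u_{\epsilon}\|$ for all $s\in S$. Hence $\pi$ has almost-invariant vectors with respect to $S$. Property (T) of $\Gamma$ (which is independent of the choice of generating set, as noted in the excerpt) then produces a non-zero $v\in\mathcal{H}$ with $\pi(g)v=v$ for every $g\in\Gamma$. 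Finally, for any $g'\in\Gamma'$ surjectivity of $\phi$ yields some $g\in\Gamma$ with $\phi(g)=g'$, and then $\pi'(g')v=\pi'(\phi(g))v=\pi(g)v=v$, so $v$ is a non-zero $\pi'$-invariant vector.

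There is essentially no obstacle here: the statement is a formal consequence of the definition together with the generating-set independence of Property (T). The only bookkeeping point is ensuring that the generating set of $\Gamma'$ chosen in the almost-invariance condition can be taken to be the image of a generating set of $\Gamma$, which is immediate from $S':=\phi(S)$.
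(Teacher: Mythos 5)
Your argument is correct and is exactly the standard pullback proof; the paper does not prove this lemma itself but defers to the literature (Bekka--de la Harpe--Valette), where the same composition-with-the-quotient-map argument appears. The one bookkeeping point you flag --- that almost-invariance may be checked on the generating set $S'=\phi(S)$ --- is handled precisely by the generating-set independence the paper records just above the lemma, so there is no gap.
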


\subsection{A spectral criterion for Property (T)}\label{subsec: spectral criterion}

Now, let $\Gamma=\pres{A_{n}}{R}$ be a finite presentation of a group. Let $R_{k}$ be the set of words in $R$ of length $k$. Define the graph $\Del{3}$ by $$V(\Del{3})=A_{n}\sqcup A_{n}^{-1}$$ and for each relator $r=r_{1}r_{2}r_{3}\in R_{3}$ add the edges $$(r_{1},r_{3}^{-1}),(r_{2},r_{1}^{-1}),(r_{3},r_{2}^{-1}).$$

The use of this graph is the following, proved independently by \cite{zuk1996} (c.f. \cite{Zuk}) and \cite{Ballmann-Swiatkowski}. The result is often stated for a model of $\Delta_{3}$ without multiple edges, and is often known as \emph{\.{Z}uk's criterion for Property (T)}.
\begin{theorem}\label{thm: Zuk's criterion}\cite{zuk1996,Ballmann-Swiatkowski}
Let $\Gamma=\pres{A_{n}}{R}$ be a finite presentation. If $\lambda_{1}(\Del{3})>1\slash 2$, then $\Gamma$ has Property (T).
\end{theorem}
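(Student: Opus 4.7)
The plan is to reduce to the case where every relator has length $3$ and then invoke the local-to-global spectral argument of Garland, in the form proved by \.{Z}uk and Ballmann--\'Swi\k{a}tkowski.

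First set $\Gamma_{3}:=\langle A_{n}\mid R_{3}\rangle$. Since $R_{3}\subseteq R$, the group $\Gamma$ is an epimorphic image of $\Gamma_{3}$, and Property (T) is preserved under epimorphisms, so it suffices to prove the theorem for $\Gamma_{3}$. Moreover, $\Delta_{3}(A_{n}\mid R)=\Delta_{3}(A_{n}\mid R_{3})$ by definition, so the spectral hypothesis is unchanged.

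Next, let $Y$ be the presentation $2$-complex of $\Gamma_{3}$, with one vertex $v_{0}$, one oriented $1$-cell per generator, and one triangular $2$-cell per relator; let $X=\tilde{Y}$ be its universal cover, on which $\Gamma_{3}$ acts freely and cocompactly. A direct inspection of corners at $v_{0}$ shows that the link of $v_{0}$ in $Y$, equivalently of any vertex of $X$, is exactly the multigraph $\Delta_{3}(A_{n}\mid R_{3})$: the edge-ends at $v_{0}$ are indexed by $A_{n}\sqcup A_{n}^{-1}$, and each triangular cell $r=r_{1}r_{2}r_{3}$ contributes the three corner edges $(r_{1},r_{3}^{-1}),\,(r_{2},r_{1}^{-1}),\,(r_{3},r_{2}^{-1})$, matching the definition.

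The main input is now the Garland/Ballmann--\'Swi\k{a}tkowski vanishing theorem: if a group $\Lambda$ acts freely and cocompactly on a simply connected $2$-complex all of whose vertex links are connected and satisfy $\lambda_{1}>1/2$, then $H^{1}(\Lambda,\pi)=0$ for every unitary representation $\pi$, which is equivalent to Property (T). Its heart is Garland's local-to-global estimate: an averaging identity over vertex links bounds the twisted $1$-cochain Laplacian below by a quantity proportional to $2\lambda_{1}(\mathrm{link})-1$, so when this is positive every twisted $1$-cocycle is a coboundary. Since every vertex link in $X$ is isomorphic to $\Delta_{3}(A_{n}\mid R_{3})$ with $\lambda_{1}>1/2$ by hypothesis, $\Gamma_{3}$ has Property (T), and hence so does $\Gamma$. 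The main technical subtlety is that $\Delta_{3}$ may have loops and parallel edges (for instance from relators such as $a^{3}$, or from several relators sharing a pair of corner letters); this is precisely why the statement defines $\Delta_{3}$ as a multigraph, and the cited local-to-global argument remains valid in this generality, so it can be invoked as a black box.
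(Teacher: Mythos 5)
The paper does not actually prove this theorem: it is imported verbatim from \cite{zuk1996} and \cite{Ballmann-Swiatkowski} (it is the standard spectral criterion, often called \.{Z}uk's criterion), so there is no internal proof to compare against. Your outline is a correct reconstruction of the standard argument from those sources. The reduction to $\Gamma_{3}=\pres{A_{n}}{R_{3}}$ via preservation of Property (T) under epimorphisms is right (and $\Delta_{3}$ only sees $R_{3}$ by definition), and your corner-by-corner identification of the vertex link of the presentation complex with $\Del{3}$ reproduces exactly the edge list $(r_{1},r_{3}^{-1}),(r_{2},r_{1}^{-1}),(r_{3},r_{2}^{-1})$ in the paper's definition. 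The analytic heart --- Garland's local-to-global estimate bounding the twisted $1$-cochain Laplacian below by a multiple of $2\lambda_{1}(\mathrm{link})-1$, combined with Delorme--Guichardet --- remains a black box in your write-up, but that is precisely the level at which the paper itself uses the result, so nothing is lost relative to the source. Two small refinements: (i) for cyclically reduced length-$3$ relators $\Delta_{3}$ has no loops (a loop would force a cancellation like $r_{2}=r_{1}^{-1}$), so the genuine issues are parallel edges and the non-simpliciality of the presentation complex and its universal cover, which the multigraph/polyhedral versions of the criterion used throughout the random-groups literature do accommodate; (ii) the connectivity of the link required by Ballmann--\'Swi\k{a}tkowski is automatic here, since a disconnected (or isolated-vertex-containing) link would have $\lambda_{1}=0$, contradicting the hypothesis $\lambda_{1}>1/2$.
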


We now apply this to recover an alternate spectral criterion for Property (T). However, before we introduce the graph $\Delta_{k}$, we first note a result regarding finite index subgroups of free groups. For the free group $F_{n}:=\mathbb{F}(A_{n})$ and $l\geq 1$, we define $\W{l}$ to be the set of freely reduced words of length $l$ in $F_{n}$. We now prove that these sets always generate finite index subgroups of $F_{n}$.

\begin{lemma}\label{lem: finite index subgroups of free groups}
Let $l\geq 1$. Then $[F_{n}:\langle \W{l}\rangle]<\infty$.

\end{lemma}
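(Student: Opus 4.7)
The plan is to bound the number of right cosets of $H:=\langle \W{l}\rangle$ in $F_{n}$ directly by exhibiting, for every coset, a representative that is a freely reduced word of length strictly less than $l$. Since the set of freely reduced words of length $<l$ in $F_{n}$ is finite (of cardinality at most $1+2n\sum_{k=0}^{l-2}(2n-1)^{k}$), this immediately yields $[F_{n}:H]<\infty$.

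To produce such a representative, I would use a prefix-truncation argument. Take any reduced word $g\in F_{n}$ with $|g|\geq l$, and write it letter by letter as $g=w_{1}w_{2}\cdots w_{|g|}$, with each $w_{i}\in A_{n}\cup A_{n}^{-1}$. Set $u:=w_{1}\cdots w_{l}$ and $g':=w_{l+1}\cdots w_{|g|}$. Because $g$ is reduced no two consecutive letters $w_{i}w_{i+1}$ are mutually inverse, so the prefix $u$ is itself a freely reduced word of length exactly $l$, i.e.\ $u\in \W{l}\subseteq H$. Moreover the concatenation $g=ug'$ involves no free cancellation, so $g'$ is a reduced word of length $|g|-l$, and
$$Hg \;=\; Hug' \;=\; Hg'.$$

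Iterating this truncation decreases the length of the representative by $l$ at each step, so after finitely many applications the coset $Hg$ is represented by a reduced word of length strictly less than $l$, completing the argument. The $l=1$ case is trivial (every generator lies in $\W{1}$, so $H=F_{n}$), and the argument above handles it uniformly by reducing to the empty word. There is no real obstacle here: the only point that genuinely requires comment is the observation that the length-$l$ prefix of a reduced word is itself reduced, which is immediate. The rest is a counting bound on the number of short reduced words.
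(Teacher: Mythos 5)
Your proof is correct and is essentially the paper's argument: the paper simply asserts the decomposition $F_{n}=B_{F_{n}}(id,l-1)\langle S_{l}(F_{n})\rangle$ and bounds the index by $\vert B_{F_{n}}(id,l-1)\vert$, and your prefix-truncation is exactly the verification of that decomposition. No issues.
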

(In fact it is easily seen that $[F_{n}:\langle \W{l}\rangle]\leq 2$).
\begin{proof}
Note that $\W{l}=S_{l}(F_{n})$, the sphere of radius $l$ in $F_{n}$. Hence $$[F_{n}:\langle \W{l}\rangle]\leq \vert B_{F_{n}}(id,l-1)\vert=2n(2n-1)^{l-2},$$ since $F_{n}=B_{F_{n}}(id,l-1)\langle S_{l}(F_{n})\rangle.$
\end{proof}

We now introduce the graph to which our spectral criterion will apply.
\begin{definition}
Let $G=\pres{A_{n}}{R}$ be a finite presentation of a group and let $k\geq 3$. We define the graph $\Delta_{k}( A_{n}\;\vert\;R)$, as follows, depending on $k\mod 3$.

\begin{enumerate}[label=$\bullet\;\mathbf{k=\arabic*\mbox{\bf\; mod }3:}$,start=0]
    \item 
 Let $V(\Del{k})=\W{k\slash 3}$. For each relator\\ $r=r_{1}\hdots r_{k}\in R_{k}$, write $r=r_{x}r_{y}r_{z}$ with $r_{x},r_{y},r_{z}\in \W{k\slash 3}$, and add the edges 
$$(r_{x},r_{z}^{-1}),\; (r_{y},r_{x}^{-1}),\;(r_{z},r_{y}^{-1}).$$

    \item
  Let $\Del{k}$ be the graph with $$V(\Del{k})=\bW{\frac{k-1}{3}}\bigsqcup\bW{\frac{k+2}{3}}.$$ For each relator $r=r_{1}\hdots r_{k}\in R_{k}$ write $r=r_{x}r_{y}r_{z}$ with $r_{x},r_{y}\in \W{\frac{k-1}{3}}$ and $r_{z}\in \W{\frac{k+2}{3}}$, and add the edges 
$$(r_{x},r_{z}^{-1}),\; (r_{y},r_{x}^{-1}),\;(r_{z},r_{y}^{-1}).$$

    \item 
 Let $\Del{k}$ be the graph with $$V(\Del{k})=\bW{\frac{k-2}{3}}\bigsqcup\bW{\frac{k+1}{3}}.$$ For each relator $r=r_{1}\hdots r_{k}\in R_{k}$ write $r=r_{x}r_{y}r_{z}$ with $r_{x},r_{y}\in \W{\frac{k+1}{3}}$ and $r_{z}\in \W{\frac{k-2}{3}}$, and add the edges 
$$(r_{x},r_{z}^{-1}),\; (r_{y},r_{x}^{-1}),\;(r_{z},r_{y}^{-1}).$$

\end{enumerate}

\end{definition}
We can prove the following.

\begin{lemma}\label{lem: spectral criterion for Property (T)}
Let $\Gamma=\pres{A_{n}}{R}$ be a finite presentation and let $k\geq 3$. If $\lambda_{1}(\Del{k})>1\slash 2$, then $\Gamma$ has Property (T).
\end{lemma}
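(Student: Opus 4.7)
The strategy is to reduce the statement to \.{Z}uk's criterion, Theorem~\ref{thm: Zuk's criterion}, by exhibiting an auxiliary finite presentation whose length-$3$ graph coincides with $\Del{k}$. Set $V:=V(\Del{k})$, viewed as a subset of $F_{n}$ (a single length-class $\W{k/3}$ if $3\mid k$, or a disjoint union of two length-classes otherwise). Then $V$ is closed under inversion and no non-identity element of $F_{n}$ is its own inverse, so we may select $S\subseteq V$ containing exactly one element from each inverse pair $\{w,w^{-1}\}$, giving a bijection $\phi\colon S\sqcup S^{-1}\to V$ that intertwines formal inversion with word inversion. For each $r=r_{x}r_{y}r_{z}\in R$, decomposed as in the definition of $\Del{k}$, define $\rho(r):=\phi^{-1}(r_{x})\phi^{-1}(r_{y})\phi^{-1}(r_{z})$, a length-$3$ word over $S\cup S^{-1}$, and put $R':=\{\rho(r):r\in R\}$.

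By construction, $\Delta_{3}(S\mid R')$ has vertex set $S\sqcup S^{-1}$, identified via $\phi$ with $V(\Del{k})$, and each relator $\rho(r)\in R'$ contributes to $\Delta_{3}(S\mid R')$ precisely the three edges that $r\in R$ contributes to $\Del{k}$ (with the same multiplicities). Hence $\Delta_{3}(S\mid R')\cong \Del{k}$ as graphs and their Laplacian spectra coincide, so $\lambda_{1}(\Delta_{3}(S\mid R'))>1/2$. Applying Theorem~\ref{thm: Zuk's criterion} to the presentation $\Gamma':=\pres{S}{R'}$ shows that $\Gamma'$ has Property (T).

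It remains to connect $\Gamma'$ with $\Gamma$. The map $s\mapsto\phi(s)\in F_{n}$ descends to a homomorphism $\Gamma'\to\Gamma$, since every $\rho(r)\in R'$ is sent to the corresponding $r\in R$, which is trivial in $\Gamma$. Its image $H$ is the subgroup of $\Gamma$ generated by the images of the elements of $V$, and in particular contains the image of $\langle\W{l}\rangle$ for some $l\geq 1$; by Lemma~\ref{lem: finite index subgroups of free groups} this subgroup has finite index in $\Gamma$. Since Property (T) passes to epimorphic images (the surjection $\Gamma'\twoheadrightarrow H$) and to finite-index overgroups ($H\leq\Gamma$), we conclude that $\Gamma$ has Property (T).

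The only real point of care, rather than a genuine obstacle, is the bookkeeping around formal inverses: one must verify that under $\phi$ the inversion on $S\sqcup S^{-1}$ agrees with word inversion in $F_{n}$, so that the three edges produced by $\rho(r)$ in $\Delta_{3}(S\mid R')$ are literally the three edges produced by $r$ in $\Del{k}$. Once this is set up consistently, the three congruence classes $k\bmod 3$ are handled uniformly by the same argument.
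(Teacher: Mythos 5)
Your proof is correct and follows essentially the same route as the paper's: both reduce to \.{Z}uk's criterion (Theorem \ref{thm: Zuk's criterion}) by forming an auxiliary length-$3$ presentation whose generators are the length-$\ell$ subwords, identifying its $\Delta_{3}$ graph with $\Del{k}$, and then transferring Property (T) to $\Gamma$ via an epimorphism onto a finite-index subgroup supplied by Lemma \ref{lem: finite index subgroups of free groups}. Your explicit choice of a transversal $S$ of the inverse pairs is if anything slightly more careful than the paper's $\mathbb{F}(\mathcal{W})$ with $\mathcal{W}$ already closed under inversion.
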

We note that this Lemma is not particularly effective when given a specific finite presentation of a group: for the above spectral condition to hold, we heuristically require $\vert R\vert>>(2n-1)^{(k+(-k\mod 3))\slash 3}$. 
However, this is exactly the regime we consider for random groups.
\begin{proof}
We prove this for $k=2\mod 3$: the other cases are similar. First, for ease, let $\Gamma'=\pres{A_{n}}{R_{k}}$. Since $\Gamma$ is a homomorphic image of $\Gamma'$, it suffices to prove that $\Gamma'$ has Property (T). 
Let $\phi: F_{n}\twoheadrightarrow \Gamma '$ be the canonical epimorphism induced by the choice of presentation for $\Gamma '$. Let $\mathcal{W}=\W{(k-2)\slash 3}\sqcup \W{(k+1)\slash 3},$ $W=\phi(\mathcal{W}),$ and let $H=\langle W \rangle_{\Gamma '}$: by Lemma \ref{lem: finite index subgroups of free groups} we have that $[\Gamma' : H]<\infty $.

For each $r\in R_{k}$, write $r=r_{x}r_{y}r_{z}$ where $r_{x},r_{y}\in \W{(k+1)\slash 3}$ and $r_{z}\in\W{(k-2)\slash3}$. Let $T=\{r_{x}r_{y}r_{z}\;:\;r\in R_{k}\}$ and let $$\tilde{\Gamma}:=\mathbb{F}(\mathcal{W})\bigg\slash \langle \langle T\rangle\rangle=\langle \mathcal{W}\;\vert\;T\rangle.$$

It is clear that there is a surjective homomorphism $\psi:\tilde{\Gamma}\twoheadrightarrow H$, so that $\Gamma'$ is virtually a homomorphic image of $\tilde{\Gamma}$. Next, we note that $\Delta_{k}( A_{n}\;\vert\;R)\cong \Delta_{3}( \mathcal{W}\;\vert\;T).$ By Theorem \ref{thm: Zuk's criterion}, if $\lambda_{1}(\Del{k})=\lambda_{1}(\Delta_{3}( \mathcal{W}\;\vert\;T))>1\slash 2,$ then $\tilde{\Gamma}$ has Property (T). Since Property (T) is preserved under epimorphisms and passing to finite index extensions, it follows that if $\lambda_{1}(\Del{k})>1\slash 2$, then $\Gamma$ has Property (T).
\end{proof}


\section{The spectral theory of almost regular graphs, Erd\"os--R\'enyi random graphs, and the unions of regular graphs}\label{sec: random graphs}
In this section we analyse the spectral theory of almost regular graphs, as well as some results on the eigenvalues of Erd\"os--R\'enyi random graphs. We also prove a result concerning the eigenvalues of the union of a well connected graph and two bipartite graphs. We first note the following lemma.

\begin{lemma}\label{lem: max eigenvalue of adjacency matrix}
Let $G$ be a graph. Then 
$\max_{i}\vert \mu_{i}(A(G))\vert\leq \max_{v\in V(G)}deg(v).$ If $G$ is bipartite, then 
$$\max_{i}\vert \mu_{i}(A(G))\vert\leq \max_{\substack{v\in V_{1}(G)\\w\in V_{2}(G)}}\sqrt{deg(v)deg(w)}.$$
\end{lemma}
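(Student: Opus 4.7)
The plan is to use the classical largest-entry argument on an eigenvector. Since $A(G)$ is symmetric with real entries (the entry $A(G)_{ij}=\mu_E(\{v_i,v_j\})$ is a non-negative integer), every eigenvalue $\mu$ is real, and there is a real eigenvector $v$ with $A(G)v=\mu v$. Pick an index $i$ with $|v_i|=\max_j |v_j|$, which we may assume is nonzero (otherwise $v=0$). Taking the $i$-th coordinate of $A(G)v=\mu v$ and applying the triangle inequality gives
$$|\mu|\,|v_i| \;=\; \Bigl|\sum_j A(G)_{ij} v_j\Bigr| \;\leq\; \sum_j A(G)_{ij}\,|v_j| \;\leq\; |v_i| \sum_j A(G)_{ij} \;=\; |v_i|\,\deg(v_i),$$
so $|\mu|\leq \deg(v_i)\leq \max_{v\in V(G)}\deg(v)$. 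This handles the first inequality.

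For the bipartite case, write $A(G)$ in block form using the partition $V(G)=V_1(G)\sqcup V_2(G)$, so that
$$A(G)=\begin{pmatrix} 0 & B \\ B^{T} & 0 \end{pmatrix}$$
for some matrix $B$ indexed by $V_1(G)\times V_2(G)$. Correspondingly split an eigenvector $v=(v^{(1)},v^{(2)})$ with eigenvalue $\mu$. I would choose $i\in V_1(G)$ maximising $|v^{(1)}_i|$ and $j\in V_2(G)$ maximising $|v^{(2)}_j|$, and then apply the triangle inequality to each block equation $Bv^{(2)}=\mu v^{(1)}$ and $B^{T}v^{(1)}=\mu v^{(2)}$. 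This yields
$$|\mu|\,|v^{(1)}_i| \;\leq\; |v^{(2)}_j|\,\deg(i) \qquad\text{and}\qquad |\mu|\,|v^{(2)}_j| \;\leq\; |v^{(1)}_i|\,\deg(j).$$

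Multiplying these two inequalities together, and assuming both $|v^{(1)}_i|$ and $|v^{(2)}_j|$ are nonzero, we may cancel to obtain $|\mu|^2\leq \deg(i)\deg(j)$, which gives the stated bound. The only thing to check is the degenerate case in which one of these maxima is zero, say $v^{(1)}=0$; then $B^{T}v^{(1)}=0=\mu v^{(2)}$, which forces $\mu=0$ (since otherwise $v^{(2)}=0$ and $v=0$), and the inequality holds trivially. Neither step is a real obstacle; the only point to watch is that $G$ is a multigraph, so $A(G)_{ij}$ should be read as the edge multiplicity throughout, and $\deg(v_i)=\sum_j A(G)_{ij}$ incorporates these multiplicities correctly, so the triangle inequality step goes through unchanged.
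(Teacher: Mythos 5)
Your proof is correct. It follows the same underlying idea as the paper's but is entirely self-contained: for the first inequality the paper simply invokes the standard fact that the spectral radius is bounded by $\vert\vert A(G)\vert\vert_{\infty}=\max_{v}deg(v)$, and your largest-entry eigenvector argument is precisely the standard proof of that fact. For the bipartite case the paper identifies the eigenvalues of $A(G)$ with the singular values of the block $B$ and cites the inequality $\max_{i}\sigma_{i}(B)\leq\sqrt{\vert\vert B\vert\vert_{\infty}\vert\vert B\vert\vert_{1}}$ from Horn--Johnson; your two-block largest-entry argument, multiplying $\vert\mu\vert\,\vert v^{(1)}_{i}\vert\leq\vert v^{(2)}_{j}\vert\,deg(i)$ and $\vert\mu\vert\,\vert v^{(2)}_{j}\vert\leq\vert v^{(1)}_{i}\vert\,deg(j)$, amounts to a direct elementary proof of that same inequality, since $\vert\vert B\vert\vert_{\infty}$ and $\vert\vert B\vert\vert_{1}$ are exactly the maximal degrees on the two sides. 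You also correctly handle the degenerate case where one block of the eigenvector vanishes, and you are right that the multigraph convention $deg(v_{i})=\sum_{j}A(G)_{ij}$ makes the row-sum step go through verbatim. The only trade-off is length versus reliance on an external reference; mathematically the two arguments prove the same bound.
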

\begin{proof}
The first result follows as $||A(G)||_{\infty}=\max_{v\in V(G)}deg(v)$, and it is standard that $||A(G)||_{\infty}$ is an upper bound for the absolute values of the eigenvalues of $A(G)$. 

The second inequality follows from e.g. \cite[3.7.2]{hornjohnson}, as follows. In this case, we have $$A(G)=\begin{pmatrix}
0&B\\B^{T}&0
\end{pmatrix},$$ for some matrix $B$. By definition, the set of eigenvalues of $A$ are the set of \emph{singular values} of $B$, $\{\sigma_{j}(B)\}_{j}$. Therefore, $\max_{i}\vert \lambda_{i}(A(G))\vert =\max_{i}\vert \sigma_{i}(B)\vert$. By \cite[3.7.2]{hornjohnson}, $$\max_{i}\vert \sigma_{i}(B)\vert\leq \sqrt{\vert \vert B\vert \vert_{\infty} \vert \vert B\vert \vert_{1}}=\max_{\substack{v\in V_{1}(G)\\w\in V_{2}(G)}}\sqrt{deg(v)deg(w)}.$$
\end{proof}


\subsection{The spectra of almost regular graphs}

We now analyse the spectra of almost regular graphs. These definitions are standard in graph theory and appear in e.g. \cite{kotowski}.
\begin{definition}[Almost regular graphs]
Let $\{G_{m}\}_{m=1}^{\infty}$ be a collection of graphs. We say that the graphs $G_{m}$ are \emph{almost $d_{m}$-regular} if for every $G_{m}$ its minimum and maximum degree are $(1+o_{m}(1))d_{m}$.
\end{definition}
\begin{definition}[Almost regular bipartite graphs]
Let $\{G_{m}\}_{m=1}^{\infty}$ be a collection of bipartite graphs. We say that the graphs $G_{m}$ are \emph{almost $(d^{(1)}_{m},d^{(2)}_{m})$-regular} if for every $G_{m}$ the minimum and maximum degree of vertices in $V_{1}(G_{m})$ are $(1+o_{m}(1))d^{(1)}_{m}$ and the minimum and maximum degree of vertices in $V_{2}(G_{m})$ are $(1+o_{m}(1))d^{(2)}_{m}$.
\end{definition}

We note the following results.
\begin{lemma*}\cite[Lemma 4.4]{kotowski}
Let $d_{m}\rightarrow \infty $ and let $G_{m}$ be almost $d_{m}$-regular. Then $\frac{1}{d_{m}}\mu_{2}(A(G_{m}))=(1+o_{m}(1))(1-\lambda_{1}(G_{m}))$. In particular, if $\mu_{2}(A(G_{m}))=o_{m}(d_{m})$ then $\lambda_{1}(G_{m})=1-o_{m}(1)$.
\end{lemma*}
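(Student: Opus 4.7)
The plan is to reduce to a direct comparison between the matrices $B_m := D_m^{-1/2} A_m D_m^{-1/2}$ and $\tfrac{1}{d_m}A_m$, where $D_m := D(G_m)$ and $A_m := A(G_m)$. By Remark \ref{rmk: switching between eigenvalues of graphs} we have $1 - \lambda_1(G_m) = \mu_2(B_m)$, so it suffices to show that $\mu_2(B_m)$ and $\tfrac{1}{d_m}\mu_2(A_m)$ agree up to a factor of $1 + o_m(1)$.

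The key algebraic identity is $B_m = \tfrac{1}{d_m} T_m A_m T_m$, where $T_m := \sqrt{d_m}\,D_m^{-1/2}$ is a positive diagonal matrix with entries $T_m[v,v] = \sqrt{d_m/\deg(v)}$. By almost-$d_m$-regularity, every diagonal entry of $T_m$ equals $1 + o_m(1)$, so both $\mu_{\min}(T_m^2)$ and $\mu_{\max}(T_m^2)$ are $1 + o_m(1)$. Ostrowski's theorem on congruence of symmetric matrices then yields $\mu_2(T_m A_m T_m) = \theta^{(m)} \mu_2(A_m)$ for some $\theta^{(m)}$ lying between the smallest and largest eigenvalues of $T_m^2$, hence $\theta^{(m)} = 1 + o_m(1)$. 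Dividing by $d_m$ delivers $\mu_2(B_m) = (1 + o_m(1))\tfrac{1}{d_m}\mu_2(A_m)$, which rearranges into the main displayed equality.

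A more elementary alternative, which already suffices for the ``in particular'' conclusion, is to prove the additive version via Weyl. Write $E_m := B_m - \tfrac{1}{d_m}A_m$ entrywise as $E_m[u,v] = \tfrac{A_m[u,v]}{d_m}\bigl(\tfrac{d_m}{\sqrt{\deg(u)\deg(v)}} - 1\bigr)$, observe that $\tfrac{d_m}{\sqrt{\deg(u)\deg(v)}} - 1 = o_m(1)$ uniformly and that $\sum_w A_m[u,w] = \deg(u) = (1+o_m(1))d_m$, so $\|E_m\|_\infty, \|E_m\|_1 = o_m(1)$, and therefore $\|E_m\|_2 \leq \sqrt{\|E_m\|_1\|E_m\|_\infty} = o_m(1)$ by Lemma \ref{lem: max eigenvalue of adjacency matrix} applied to $E_m$ (or any standard interpolation bound). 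Weyl's inequality then gives $|\mu_2(B_m) - \tfrac{1}{d_m}\mu_2(A_m)| = o_m(1)$, from which $\mu_2(A_m) = o(d_m)$ forces $\mu_2(B_m) = o_m(1)$, i.e.\ $\lambda_1(G_m) = 1 - o_m(1)$.

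There is no real obstacle here: the argument is pure perturbation theory, and the only minor decision is whether to invoke Ostrowski (for the multiplicative form as stated) or Weyl (for the additive form, which already yields the ``in particular'' conclusion that is used later in the paper).
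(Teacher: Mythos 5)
Your argument is correct. Note first that the paper does not prove this lemma at all --- it is quoted from Kotowski--Kotowski --- but the paper's proof of the bipartite analogue (Lemma \ref{lem: bipartite almost regular spectral calculation}) is exactly your second route: write $D^{-1/2}AD^{-1/2}$ as $\tfrac{1}{d_m}A$ plus a symmetric error matrix of $o_m(1)$ operator norm and apply Weyl. So that part of your proposal matches the paper's method, and as you correctly observe it already yields the ``in particular'' conclusion, which is the only part ever used downstream. Your first route, via Ostrowski's theorem on the congruence $B_m = T_m(\tfrac{1}{d_m}A_m)T_m$ with $T_m^2$ having all eigenvalues $1+o_m(1)$, is a genuinely different and slightly stronger argument: it delivers the multiplicative identity $\tfrac{1}{d_m}\mu_2(A_m)=(1+o_m(1))\mu_2(B_m)$ exactly as stated, whereas the additive Weyl bound does not literally imply a $(1+o_m(1))$ multiplicative relation when both quantities tend to $0$ (e.g.\ $1/m$ versus $1/m^2$ are additively $o(1)$-close but not multiplicatively comparable). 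You are right to flag this distinction; Ostrowski is the cleaner way to get the lemma verbatim, and Weyl is the cheaper way to get what the paper actually needs. The only cosmetic issue is that Lemma \ref{lem: max eigenvalue of adjacency matrix} is stated for adjacency matrices of graphs rather than general symmetric matrices, so for $E_m$ you should instead invoke the standard bound $\max_i\vert\mu_i(M)\vert\leq \Vert M\Vert_\infty$ for symmetric $M$ directly, as you hint.
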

\begin{lemma*}\cite[Lemma 4.5]{kotowski}
Let $G_{m}$ be an almost $d_{m}$-regular graph and let $G'_{m}$ be a graph on the same vertex set whose maximum degree is $o_{m}(d_{m})$. Then:
\begin{enumerate}[label=$\roman*)$]
    \item $G_{m}\cup G'_{m}$ is almost $d_{m}$ regular,
    \item and $\lambda_{1}(G_{m})=\lambda_{1}(G_{m}\cup G'_{m})+o_{m}(1).$
\end{enumerate}
\end{lemma*}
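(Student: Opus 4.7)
I would prove (i) directly from the definition of almost regularity, and reduce (ii) to Weyl's inequality combined with the two preceding lemmas. The key observation is that $A(G_{m}\cup G'_{m})=A(G_{m})+A(G'_{m})$, so $A(G'_{m})$ acts as a small perturbation; the preceding Kotowski--Kotowski lemma then converts control of $\mu_{2}$ of the adjacency matrix into control of $1-\lambda_{1}$ of the Laplacian.

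For (i), every vertex $v$ satisfies
$$\deg_{G_{m}\cup G'_{m}}(v)=\deg_{G_{m}}(v)+\deg_{G'_{m}}(v)=(1+o_{m}(1))d_{m}+o_{m}(d_{m})=(1+o_{m}(1))d_{m},$$
which gives almost $d_{m}$-regularity. For (ii), Lemma \ref{lem: max eigenvalue of adjacency matrix} provides $\max_{i}|\mu_{i}(A(G'_{m}))|\leq \max_{v}\deg_{G'_{m}}(v)=o_{m}(d_{m})$. Applying Weyl's inequality (twice, once in each direction) to the decomposition $A(G_{m}\cup G'_{m})=A(G_{m})+A(G'_{m})$ yields
$$\bigl|\mu_{2}(A(G_{m}\cup G'_{m}))-\mu_{2}(A(G_{m}))\bigr|\leq \max\bigl(|\mu_{1}(A(G'_{m}))|,|\mu_{|V|}(A(G'_{m}))|\bigr)=o_{m}(d_{m}).$$
Since both $G_{m}$ and (by (i)) $G_{m}\cup G'_{m}$ are almost $d_{m}$-regular, the preceding lemma gives $\mu_{2}(A(G_{m}))/d_{m}=(1+o_{m}(1))(1-\lambda_{1}(G_{m}))$, and similarly for the union. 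Dividing the Weyl estimate by $d_{m}$, substituting these two identities, and using $|1-\lambda_{1}|\leq 1$ to absorb multiplicative $(1+o_{m}(1))$ factors into additive $o_{m}(1)$ terms, I obtain $\lambda_{1}(G_{m})=\lambda_{1}(G_{m}\cup G'_{m})+o_{m}(1)$, as required.

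The argument is essentially routine and I do not anticipate a genuine obstacle. The only care needed is in the bookkeeping of multiplicative versus additive error terms when passing between $\mu_{2}(A(\cdot))$ (which lives on scale $d_{m}$) and $1-\lambda_{1}(\cdot)$ (which is bounded by $1$), and in remembering that the preceding Lemma 4.4 implicitly assumes $d_{m}\to\infty$; we therefore inherit this assumption, without which $o_{m}(d_{m})$ and division by $d_{m}$ would be meaningless.
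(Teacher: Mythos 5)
Your proof is correct. Note that the paper does not actually prove this statement — it is quoted from Kotowski--Kotowski \cite[Lemma 4.5]{kotowski} — but the paper's own proof of the bipartite analogue (Lemma \ref{lem: spectra under addition of small bipartite graph}) uses exactly your strategy: bound $\max_{i}\vert\mu_{i}(A(G'_{m}))\vert$ by the maximum degree via Lemma \ref{lem: max eigenvalue of adjacency matrix}, perturb with Weyl's inequality, and convert back to Laplacian eigenvalues; your handling of the $d_{m}\to\infty$ hypothesis and of the multiplicative-versus-additive error terms is also sound.
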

Again, recall that $\lambda_{1}(G)=1-\mu_{2}(D^{-1\slash 2}AD^{-1\slash 2})$.
We now prove the corresponding result for bipartite graphs: our proofs are different to \cite{kotowski}, and rely on Weyl's inequality.

\begin{lemma}\label{lem: bipartite almost regular spectral calculation}
Let $d^{(1)}_{m},d^{(2)}_{m}\rightarrow \infty $ and let $G_{m}$ be almost $(d^{(1)}_{m},d^{(2)}_{m})$-regular. For $i=1,\hdots , \vert V(G_{m})\vert $: $$\frac{1}{\sqrt{d^{(1)}_{m}d^{(2)}_{m}}}\mu_{i}(A(G_{m}))=\mu_{i}(D^{-1\slash 2}(G_{m})A(G_{m})D^{-1\slash 2}(G_{m}))+o_{m}(1).$$  In particular, if $\mu_{2}(A(G_{m}))=o_{m}\bigg(\sqrt{d^{(1)}_{m}d^{(2)}_{m}}\bigg),$ then $\lambda_{1}(G_{m})=1-o_{m}(1)$.
\end{lemma}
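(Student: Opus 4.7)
The plan is to imitate the structure of the (regular, non-bipartite) calculations in \cite{kotowski}, but exploit the bipartite block structure of $A(G_m)$ together with Weyl's inequality instead of quadratic-form manipulations.

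First I would introduce the ``idealised'' diagonal matrix $D_0$ on $V(G_m)$ with $D_0(v,v) = d^{(1)}_m$ for $v \in V_1(G_m)$ and $D_0(v,v) = d^{(2)}_m$ for $v \in V_2(G_m)$. Because $G_m$ is bipartite, $A(G_m)$ only has nonzero entries between $V_1(G_m)$ and $V_2(G_m)$, so a direct computation of the matrix product gives
\[
D_0^{-1/2} A(G_m) D_0^{-1/2} = \frac{1}{\sqrt{d^{(1)}_m d^{(2)}_m}}\, A(G_m).
\]
Thus the eigenvalues of $D_0^{-1/2}A(G_m)D_0^{-1/2}$ are precisely the rescaled eigenvalues appearing on the left hand side of the claim.

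Next I would estimate the difference $M := D^{-1/2}(G_m) A(G_m) D^{-1/2}(G_m) - D_0^{-1/2} A(G_m) D_0^{-1/2}$ in operator norm. The almost-regularity hypothesis says $\deg(v) = (1+o_m(1))d^{(j)}_m$ for $v \in V_j(G_m)$, so entrywise
\[
M_{u,v} = \frac{A(G_m)_{u,v}}{\sqrt{d^{(1)}_m d^{(2)}_m}}\, \varepsilon_{u,v}
\]
with $\varepsilon_{u,v} = o_m(1)$ uniformly in $u,v$. Therefore $\|M\|_{op} \le o_m(1) \cdot \frac{1}{\sqrt{d^{(1)}_m d^{(2)}_m}} \|A(G_m)\|_{op}$, and Lemma~\ref{lem: max eigenvalue of adjacency matrix} (second part) together with almost-regularity yields $\|A(G_m)\|_{op} \le (1+o_m(1))\sqrt{d^{(1)}_m d^{(2)}_m}$. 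Combining gives $\|M\|_{op} = o_m(1)$.

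Now Weyl's inequality applied to $D^{-1/2}A(G_m)D^{-1/2} = D_0^{-1/2}A(G_m)D_0^{-1/2} + M$ yields
\[
\mu_i\bigl(D^{-1/2}(G_m)A(G_m)D^{-1/2}(G_m)\bigr) = \mu_i\bigl(D_0^{-1/2}A(G_m)D_0^{-1/2}\bigr) + o_m(1) = \frac{\mu_i(A(G_m))}{\sqrt{d^{(1)}_m d^{(2)}_m}} + o_m(1),
\]
which is the identity in the statement. For the ``in particular'' clause, I would plug $i=2$ into the identity: the hypothesis $\mu_2(A(G_m)) = o_m(\sqrt{d^{(1)}_m d^{(2)}_m})$ gives $\mu_2(D^{-1/2}A(G_m)D^{-1/2}) = o_m(1)$, and then Remark~\ref{rmk: switching between eigenvalues of graphs} converts this to $\lambda_1(G_m) = 1 - o_m(1)$.

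The proof is essentially routine; the only mild subtlety is ensuring that the $o_m(1)$ factors coming from almost-regularity are uniform across entries of $A(G_m)$ so that one may factor them out of the operator-norm bound on $M$. This uniformity is exactly what the definition of almost $(d^{(1)}_m,d^{(2)}_m)$-regular provides (minimum and maximum degree in each side are both $(1+o_m(1))d^{(j)}_m$), so no further work is needed.
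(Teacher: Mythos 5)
Your proposal is correct and follows essentially the same route as the paper: both decompose $D^{-1/2}AD^{-1/2}$ as the rescaled adjacency matrix $\frac{1}{\sqrt{d^{(1)}_m d^{(2)}_m}}A$ plus an error term of $o_m(1)$ spectral norm and then apply Weyl's inequality, finishing with Remark \ref{rmk: switching between eigenvalues of graphs}. The only (cosmetic) difference is that the paper bounds the error via $\|K\|_\infty$ while you bound its operator norm through entrywise domination by $A$ together with Lemma \ref{lem: max eigenvalue of adjacency matrix}; your version handles the case $d^{(1)}_m\not\asymp d^{(2)}_m$ slightly more transparently.
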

\begin{proof}
  
As $G_{m}$ is almost $(d^{(1)}_{m},d^{(2)}_{m})$-regular, we see that for $$A=A(G_{m})=\begin{pmatrix}
0&A_{1}\\
A_{1}^{T}&0
\end{pmatrix},\;D=D(G_{m}),$$
there exists a matrix $K$ with norm $\vert\vert K\vert\vert_{\infty}=o_{m}(1)$ such that $$\frac{1}{\sqrt{d_{m}^{(1)}d_{m}^{(2)}}}A=D^{-1\slash 2}AD^{-1\slash 2}+K.$$

Since $\vert \mu_{i}(K)\vert \leq \vert \vert K\vert \vert_{\infty}=o_{m}(1)$ for all $i$, the first statement of the Lemma follows easily by Weyl's inequality. The second statement follows from Remark \ref{rmk: switching between eigenvalues of graphs}.
\end{proof}
\begin{lemma}\label{lem: spectra under addition of small bipartite graph}
Let $d^{(1)}_{m},d^{(2)}_{m}\rightarrow \infty $, and let $G_{m}$ be almost $(d^{(1)}_{m},d^{(2)}_{m})$-regular. Let $G'_{m}$ be a bipartite graph on the same vertex set as $G_{m}$ with the same vertex partitions, such that the maximum degree of $v\in V_{i}(G'_{m})$ is $o_{m}(d^{(i)}_{m})$. Then:
\begin{enumerate}[label=$\roman*)$]
    \item $G_{m}\cup G'_{m}$ is almost $(d^{(1)}_{m},d^{(2)}_{m})$ regular,
    \item and $\lambda_{1}(G_{m})=\lambda_{1}(G_{m}\cup G'_{m})+o_{m}(1).$
\end{enumerate}
\end{lemma}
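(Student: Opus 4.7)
The plan is to prove this as a direct bipartite analogue of \cite[Lemma 4.5]{kotowski}, with the key step being a reduction, via Weyl's inequality, to controlling the eigenvalues of the adjacency matrix of $G'_m$, for which we have the bipartite bound from Lemma \ref{lem: max eigenvalue of adjacency matrix}.

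First, for part $i)$, I would simply observe that since edges are only added when forming $G_m\cup G_m'$, the degree of any vertex $v\in V_i(G_m\cup G_m')$ satisfies $\deg_{G_m}(v)\leq \deg_{G_m\cup G_m'}(v)\leq \deg_{G_m}(v)+\deg_{G_m'}(v)=(1+o_m(1))d_m^{(i)}+o_m(d_m^{(i)})=(1+o_m(1))d_m^{(i)}$, which gives almost $(d_m^{(1)},d_m^{(2)})$-regularity of the union.

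For part $ii)$, the strategy is to move from the normalised Laplacian eigenvalues to adjacency eigenvalues, apply Weyl's inequality to $A(G_m\cup G_m')=A(G_m)+A(G_m')$, and then use the bipartite bound on the adjacency eigenvalues of $G_m'$. More precisely, since $G_m'$ is bipartite with the same vertex partition, the hypothesis on degrees combined with Lemma \ref{lem: max eigenvalue of adjacency matrix} gives $\max_i|\mu_i(A(G_m'))|\leq \max_{v\in V_1(G_m'),\,w\in V_2(G_m')}\sqrt{\deg(v)\deg(w)}=o_m\!\left(\sqrt{d_m^{(1)}d_m^{(2)}}\right)$. By Weyl's inequality applied to the symmetric sum $A(G_m\cup G_m')=A(G_m)+A(G_m')$, we therefore have $\mu_i(A(G_m\cup G_m'))=\mu_i(A(G_m))+o_m\!\left(\sqrt{d_m^{(1)}d_m^{(2)}}\right)$ for every $i$.

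Combining part $i)$ with Lemma \ref{lem: bipartite almost regular spectral calculation}, applied to both $G_m$ and $G_m\cup G_m'$ (which are both almost $(d_m^{(1)},d_m^{(2)})$-regular), we obtain
\[\mu_2(D^{-1/2}A D^{-1/2})=\tfrac{1}{\sqrt{d_m^{(1)}d_m^{(2)}}}\mu_2(A)+o_m(1)\]
for each of the two graphs, and subtracting these equalities together with the Weyl estimate above gives $\mu_2(D^{-1/2}(G_m\cup G_m')A(G_m\cup G_m')D^{-1/2}(G_m\cup G_m'))=\mu_2(D^{-1/2}(G_m)A(G_m)D^{-1/2}(G_m))+o_m(1)$. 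Applying Remark \ref{rmk: switching between eigenvalues of graphs} then yields $\lambda_1(G_m\cup G_m')=\lambda_1(G_m)+o_m(1)$.

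The only real delicacy is bookkeeping: one needs to be careful that the normalisation constants $\sqrt{d_m^{(1)}d_m^{(2)}}$ coming from $G_m$ and from $G_m\cup G_m'$ agree up to a $(1+o_m(1))$ factor, which is immediate from part $i)$ since the average degrees on each side of the partition only change multiplicatively by $1+o_m(1)$. Other than this, every step is routine given the earlier lemmas, so I do not anticipate any substantial obstacle.
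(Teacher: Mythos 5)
Your proposal is correct and follows essentially the same route as the paper: write $A(G_m\cup G'_m)=A(G_m)+A(G'_m)$, bound $\max_i|\mu_i(A(G'_m))|$ by $o_m\big(\sqrt{d_m^{(1)}d_m^{(2)}}\big)$ via the bipartite part of Lemma \ref{lem: max eigenvalue of adjacency matrix}, apply Weyl's inequality, and then transfer back to $\lambda_1$ via Lemma \ref{lem: bipartite almost regular spectral calculation} and Remark \ref{rmk: switching between eigenvalues of graphs}. No gaps.
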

\begin{proof}
Part $i)$ is immediate. For part $ii)$, we see that $A(G_{m}\cup G'_{m})=A(G_{m})+A(G'_{m})$: since the maximum degree of a vertex $v\in V_{i}(G'_{m})=V_{i}(G_{m})$ is $o(d_{m}^{(i)})$, we have by Lemma \ref{lem: max eigenvalue of adjacency matrix} that $\max_{i}\vert \mu_{i}(A(G'_{m}))\vert \leq o_{m}\bigg(\sqrt{d_{m}^{(1)}d_{m}^{(2)}}\bigg)$, and hence $$\max_{i}\bigg\vert \mu_{i}\bigg(\frac{1}{\sqrt{d_{m}^{(1)}d_{m}^{(2)}}}A(G'_{m})\bigg)\bigg\vert =o_{m}(1).$$ By Weyl's inequality, 
\begin{equation*}
    \begin{split}
        &\mu_{2}\bigg(\frac{1}{\sqrt{d_{m}^{(1)}d_{m}^{(2)}}}(A(G_{m})+A(G'_{m}))\bigg)\\
        &\leq \mu_{2}\bigg(\frac{1}{\sqrt{d_{m}^{(1)}d_{m}^{(2)}}}A(G_{m})\bigg)+\mu_{1}\bigg(\frac{1}{\sqrt{d_{m}^{(1)}d_{m}^{(2)}}}A(G'_{m})\bigg) \\
        &=\mu_{2}\bigg(\frac{1}{\sqrt{d_{m}^{(1)}d_{m}^{(2)}}}A(G_{m})\bigg)+o_{m}(1).
    \end{split}
\end{equation*}
Similarly $$\mu_{2}\bigg(\frac{1}{\sqrt{d_{m}^{(1)}d_{m}^{(2)}}}(A(G_{m})+A(G'_{m}))\bigg)\geq \mu_{2}\bigg(\frac{1}{\sqrt{d_{m}^{(1)}d_{m}^{(2)}}}A(G_{m})\bigg)+o_{m}(1),$$
and the result follows by Remark \ref{rmk: switching between eigenvalues of graphs} and Lemma \ref{lem: bipartite almost regular spectral calculation}.
\end{proof}


\subsection{Almost regularity of Erd\"os--R\'enyi random graphs and their eigenvalues}
In this section, we introduce some models of random graphs, and then prove they are almost regular.

\begin{definition}[\emph{Erd\"os--R\'enyi random graph}]
Let $m\geq 1$ and $0<p:=p(m)<1$. The Erd\"os--R\'enyi random graph $G(m,p)$ is the random graph model with vertex set $\{u_{1},\hdots ,u_{m}\}$ and edge set obtained by adding each edge $\{u_{i},u_{j}\}$ independently with probability $p$. For a random graph $G$ we write $G\sim G(m,p)$ to indicate that the distribution of $G$ is that of $G(m,p)$.
\end{definition}

\begin{definition}[\emph{Erd\"os--R\'enyi random bipartite graph}]
Let $m_{1},m_{2}\geq 1$ and let $0<p:=p(m_{1},m_{2})<1$. The Erd\"os--R\'enyi random bipartite graph $G(m_{1},m_{2},p)$ is the random bipartite graph model with vertex set $V_{1}=\{u_{1},\hdots ,u_{m_{1}}\},\; V_{2}=\{v_{1},\hdots ,v_{m_{2}}\},$ and edge set obtained by adding each edge $\{u_{i},v_{j}\}$ independently with probability $p$.
\end{definition}
Given a model of random graphs $\mathcal{M}$, and a random matrix $M$, we write $M\sim A(\mathcal{M})$ to indicate that the distribution of $M$ is the same as that obtained by sampling a graph $G\sim \mathcal{M}$ and then taking its adjacency matrix.

We now analyse the regularity of random bipartite graphs. For this we will use the Chernoff bounds: for $X\sim Bin(n,p)$ and $\delta\in [0,1],$
$$\mathbb{P}(\vert X - np\vert \geq \delta np)\leq 2exp(-np \delta^{2}\slash 3).$$
\begin{lemma}\label{lem: almost regular G(m,n,p)}
Let $m_{2}=m_{2}(m_{1})$ and $p=p(m_{1},m_{2})=p(m_{1})$ be such that $\min\{m_{1},m_{2}\}p=\Omega _{m_{1}} (\log \max\{m_{1},m_{2}\})$. Then \aas{m_{1}} $G(m_{1},m_{2},p)$ is almost $(m_{2}p,m_{1}p)$-regular.
\end{lemma}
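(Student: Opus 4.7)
The plan is a textbook concentration-plus-union-bound argument. Every vertex $v \in V_1(G(m_1, m_2, p))$ has degree $\deg(v) \sim \mathrm{Bin}(m_2, p)$ with $\mathbb{E}[\deg(v)] = m_2 p$, and every $w \in V_2(G(m_1, m_2, p))$ has degree $\deg(w) \sim \mathrm{Bin}(m_1, p)$ with $\mathbb{E}[\deg(w)] = m_1 p$, since the edges incident to a fixed vertex are independent $\mathrm{Bernoulli}(p)$ variables. By the definition of almost $(m_2 p, m_1 p)$-regularity, it is enough to show that \aas{m_1} all $m_1 + m_2$ of these degrees lie within a multiplicative factor $(1 + o_{m_1}(1))$ of their expectations.

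First I would pick a function $\delta = \delta(m_1) \in [0,1]$ with $\delta = o_{m_1}(1)$ (to be specified) and apply the Chernoff bound recalled immediately before the lemma to each individual vertex. For any $v \in V_1$,
$$\mathbb{P}\bigl(|\deg(v) - m_2 p| \geq \delta\, m_2 p\bigr) \leq 2\exp(-m_2 p\, \delta^2 / 3),$$
and analogously on $V_2$ with $m_1 p$ in place of $m_2 p$; both bounds are dominated by $2\exp(-\min\{m_1, m_2\}\, p\, \delta^2/3)$. A union bound over the $m_1 + m_2 \leq 2\max\{m_1, m_2\}$ vertices then gives
$$\mathbb{P}\bigl(\text{some vertex has degree outside } (1 \pm \delta)\text{ times its mean}\bigr) \leq 4\max\{m_1, m_2\}\, \exp\!\left(-\tfrac{1}{3}\min\{m_1, m_2\}\, p\, \delta^2\right).$$

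The entire problem now reduces to choosing $\delta(m_1) \to 0$ such that $\tfrac{1}{3}\min\{m_1, m_2\}\, p\, \delta^2 - \log\max\{m_1, m_2\} \to \infty$. Unpacking the conventions, the hypothesis $\min\{m_1, m_2\}\, p = \Omega_{m_1}(\log \max\{m_1, m_2\})$ says exactly that $\log\max\{m_1, m_2\} / (\min\{m_1, m_2\}\, p) \to 0$, so there is ample slack. A concrete choice that works is
$$\delta(m_1) := \left(\frac{\log\max\{m_1, m_2\}}{\min\{m_1, m_2\}\, p}\right)^{1/4},$$
which tends to $0$, while $\min\{m_1, m_2\}\, p\, \delta^2 = \sqrt{\min\{m_1, m_2\}\, p \cdot \log\max\{m_1, m_2\}}$ exceeds any constant multiple of $\log\max\{m_1, m_2\}$ for $m_1$ large, again by the hypothesis. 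Plugging this $\delta$ into the displayed union bound drives it to $0$, so \aas{m_1} every vertex degree lies within $(1 \pm \delta(m_1))$ of its expected value, which is the desired almost $(m_2 p, m_1 p)$-regularity.

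I do not foresee any genuine obstacle; the only care needed is to select $\delta$ small enough to witness the almost-regularity definition but large enough that the Chernoff exponent overwhelms the $(m_1 + m_2)$ polynomial cost of the union bound, and the stated growth hypothesis is engineered precisely to allow such a choice. An identical argument (with a single degree distribution rather than two) applies to the non-bipartite Erdős--Rényi model $G(m, p)$, which is presumably the shape in which a companion statement will be used for the graph $\Sigma_2$ described in the introduction.
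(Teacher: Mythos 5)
Your argument is correct and is essentially the paper's own proof: a Chernoff bound on each vertex degree with a vanishing relative error $\delta$, followed by a union bound, where the hypothesis $\min\{m_1,m_2\}p=\Omega_{m_1}(\log\max\{m_1,m_2\})$ supplies exactly the slack needed to choose $\delta\to 0$ while the exponent still beats the polynomial union-bound cost. The paper picks $\epsilon=\omega^{-1/3}$ and bounds the two vertex classes separately against $m_1$ and $m_2$, whereas you pick $\delta=(\log\max/(\min p))^{1/4}$ and bound both against $\max\{m_1,m_2\}$; these are cosmetic differences only.
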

\begin{proof}
First note $m_{2}p\geq  \omega \log m_{1}$ and $m_{1}p\geq \omega \log m_{2}$ for some $\omega\rightarrow\infty$ as $m_{1}\rightarrow\infty$. Let $G\sim G(m_{1},m_{2},p)$. Let $v\in V_{1}(G)$, $w\in V_{2}(G)$. Note that $\mathbb{E}(deg(v))=m_{2}p$, $\mathbb{E}(deg(w))=m_{1}p,$ and $Var(deg(v))=m_{2}p(1-p),$ $Var(deg(w))=m_{1}p(1-p).$ Let $\epsilon=\omega^{-1\slash 3}$. By the Chernoff bounds, for a fixed vertex $v$ in $V_{1}$: 
$$\mathbb{P}(\vert deg(v)-m_{2}p\vert\geq \epsilon m_{2}p)\leq 2 exp\{-\epsilon^{2}m_{2}p\slash 3\}.$$
Hence the probability that there exists a vertex in $V_{1}$ with degree too large or small is:
\begin{equation*}
\begin{split}
\mathbb{P}(\exists\; v\in V_{1}\;:\;\vert deg(v)-m_{2}p\vert\geq \epsilon m_{2}p)&\leq 2m_{1}exp\{-\epsilon^{2}m_{2}p\slash 3\}\\
&\leq 2m_{1}exp\{-\omega^{1\slash 3}\log m_{1}\slash 3\}\\
&=2m_{1}^{-\Omega_{m_{1}} (1)}.  
\end{split}
\end{equation*}

Similarly: 
\begin{equation*}
\begin{split}
\mathbb{P}(\exists \;w\in V_{2}\;:\;\vert deg(w)-m_{1}p\vert\geq \epsilon m_{1}p)&\leq 2m_{2}exp\{-\epsilon^{2}m_{1}p\slash 3\}\\
&\leq2 m_{2}exp\{-\omega^{1\slash 3}\log m_{2}\slash 3\}\\
&=2m_{2}^{-\Omega_{m_{1}}(1)}.
\end{split}
\end{equation*}
\end{proof}
Therefore we immediately see the following.

\begin{lemma}\label{lem: leading eigenvalue of random bipartite graph}
Let $m_{2}=m_{2}(m_{1})$ and $p=p(m_{1})$ be such that $\min\{m_{1},m_{2}\}p=\Omega_{m_{1}}( \log \max\{m_{1},m_{2}\})$. Then a.a.s.$(m_{1})$ $$\mu_{1}(A(G(m_{1},m_{2},p)))\leq [1+o_{m_{1}}(1)]p\sqrt{m_{1}m_{2}}.$$
\end{lemma}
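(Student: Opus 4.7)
The plan is to obtain this as an immediate consequence of the two preceding results in the subsection: the almost regularity of $G(m_1,m_2,p)$ (Lemma \ref{lem: almost regular G(m,n,p)}) together with the degree-based eigenvalue bound for bipartite graphs (Lemma \ref{lem: max eigenvalue of adjacency matrix}).

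First, I would sample $G\sim G(m_1,m_2,p)$ and apply Lemma \ref{lem: almost regular G(m,n,p)}, whose hypothesis $\min\{m_1,m_2\}p=\Omega_{m_1}(\log\max\{m_1,m_2\})$ is exactly what is assumed here. This gives, a.a.s.$(m_1)$, that $G$ is almost $(m_2 p, m_1 p)$-regular; in particular, on this high-probability event every $v\in V_1(G)$ satisfies $\deg(v)\leq (1+o_{m_1}(1))m_2 p$ and every $w\in V_2(G)$ satisfies $\deg(w)\leq (1+o_{m_1}(1))m_1 p$.

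Next I would invoke the bipartite bound from Lemma \ref{lem: max eigenvalue of adjacency matrix}, namely
$$\mu_1(A(G)) \leq \max_i |\mu_i(A(G))| \leq \max_{\substack{v\in V_1(G)\\ w\in V_2(G)}}\sqrt{\deg(v)\deg(w)}.$$
On the almost-regularity event this is at most
$$\sqrt{(1+o_{m_1}(1))^2\, m_1 m_2\, p^2} = (1+o_{m_1}(1))\, p\sqrt{m_1 m_2},$$
which is the desired conclusion.

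There is no real obstacle here: the statement is essentially a corollary of the two lemmas immediately preceding it, and the only minor step is absorbing the product of two $(1+o_{m_1}(1))$ factors under the square root into a single $(1+o_{m_1}(1))$ prefactor, which is routine.
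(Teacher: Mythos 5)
Your proposal is correct and follows exactly the paper's own proof: apply Lemma \ref{lem: almost regular G(m,n,p)} to get the a.a.s.\ degree bounds, then the bipartite case of Lemma \ref{lem: max eigenvalue of adjacency matrix} to bound $\max_i|\mu_i(A(G))|$ by $(1+o_{m_1}(1))p\sqrt{m_1m_2}$. No differences to note.
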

\begin{proof}
By Lemma \ref{lem: almost regular G(m,n,p)}, a.a.s.$(m_{1})$ the maximum degree of a vertex in $V_{1}$ is $(1+o_{m_{1}}(1))m_{2}p,$ and the maximum degree of a vertex in $V_{2}$ is $(1+o_{m_{1}}(1))m_{1}p$. By Lemma \ref{lem: max eigenvalue of adjacency matrix}, 
$$\max_{i}\vert \mu_{i}(A(G(m_{1},m_{2}p)))\vert \leq \max_{\substack{v\in V_{1}(G)\\w\in V_{2}(G)}}\sqrt{deg(v)deg(w)}\leq [1+o_{m_{1}}(1)]\sqrt{m_{1}m_{2}p^{2}}$$
with probability tending to $1$ as $m_{1}$ tends to infinity.

\end{proof}

Similarly we can deduce that the Erd\"os--R\'enyi random graph is almost regular. \begin{lemma}\label{lem: almost regular G(m,p)}
Let $m\geq 1$ and $p=p(m)$ be such that $mp=\Omega _{m} (\log m)$. Then \aas{m} $G(m,p)$ is almost $mp$-regular.
\end{lemma}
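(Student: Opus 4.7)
The plan is to mimic the bipartite argument from Lemma \ref{lem: almost regular G(m,n,p)}, replacing the two separate union bounds over $V_1$ and $V_2$ with a single union bound over all $m$ vertices. The only subtlety is that in the simple graph $G(m,p)$ a vertex $v$ has degree distributed as $\mathrm{Bin}(m-1,p)$ rather than $\mathrm{Bin}(m,p)$, but this off-by-one issue is absorbed harmlessly into the $(1+o_m(1))$ error.

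Concretely, I would first fix a function $\omega = \omega(m) \to \infty$ such that $mp \geq \omega \log m$, which is possible by the hypothesis $mp = \Omega_m(\log m)$. Set $\epsilon = \omega^{-1/3}$, so $\epsilon = o_m(1)$ while $\epsilon^2 mp \geq \omega^{1/3}\log m \to \infty$. For a fixed vertex $v$ of $G \sim G(m,p)$, the degree $\deg(v)$ is a sum of $m-1$ independent Bernoulli$(p)$ variables, so $\mathbb{E}(\deg(v)) = (m-1)p$ and by the Chernoff bound
\[
\mathbb{P}\bigl(\vert \deg(v)-(m-1)p\vert \geq \epsilon (m-1)p\bigr) \leq 2\exp\bigl(-\epsilon^2 (m-1)p/3\bigr).
\]

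Then I would apply the union bound over all $m$ vertices to obtain
\[
\mathbb{P}\bigl(\exists\, v \in V(G):\,\vert \deg(v)-(m-1)p\vert \geq \epsilon (m-1)p\bigr) \leq 2m\exp\bigl(-\omega^{1/3}\log m/3\bigr) = 2m^{-\Omega_m(1)},
\]
which tends to zero as $m \to \infty$. On the complementary event, every vertex has degree in the range $(1\pm \epsilon)(m-1)p = (1+o_m(1))mp$, so both the minimum and maximum degree of $G$ equal $(1+o_m(1))mp$, establishing almost $mp$-regularity a.a.s.$(m)$.

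There is no real obstacle here: the proof is a direct and slightly simpler analogue of Lemma \ref{lem: almost regular G(m,n,p)}, with the absorption of the $(m-1)p$ versus $mp$ discrepancy into the $o_m(1)$ error being the only point requiring a line of comment.
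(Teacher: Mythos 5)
Your proposal is correct and is exactly the argument the paper intends: the paper gives no separate proof of this lemma, deriving it ``similarly'' to the bipartite case (Lemma \ref{lem: almost regular G(m,n,p)}) via the same Chernoff-plus-union-bound computation with $\epsilon=\omega^{-1/3}$. Your extra remark about absorbing the $(m-1)p$ versus $mp$ discrepancy into the $(1+o_m(1))$ factor is a harmless and accurate refinement.
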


We now note some results on the eigenvalues of Erd\"os--R\'enyi random graphs. The eigenvalues of $G(m,p)$ were first analysed by \cite{furedikomlos}: we use the following result, due to \cite{furedikomlos} (an extension to a more general model can be found in \cite{chungrandomgraph}).
\begin{theorem}\label{lem: eigenvalue of ER random graph} \cite[Theorem 1]{furedikomlos}
Let $p>0$ be such that $mp=\Omega_{m}(\log^{6} (m))$, and let $G\sim G(m,p)$. Then a.a.s.$(m)$, $$\max_{i\neq 1}\vert \mu_{i}(A(G))\vert \leq 2[1+o_{m}(1)]\sqrt{mp},$$ $$\max_{i\neq 0}\vert 1-\lambda_{i}(G)
\vert =o_{m}(1).$$
\end{theorem}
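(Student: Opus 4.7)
The plan is to prove Theorem \ref{lem: eigenvalue of ER random graph} by the trace method of F\"uredi--Koml\'os. Write $A = A(G)$ and decompose $A = \mathbb{E}(A) + B$, where $B$ has centred, symmetric, independent (modulo symmetry) entries bounded by $1$ with variance $p(1-p)$. The matrix $\mathbb{E}(A) = p(J-I)$, where $J$ is the all-ones matrix, has one eigenvalue equal to $(m-1)p$ (with eigenvector $\mathbf{1}$) and $m-1$ remaining eigenvalues equal to $-p$. Weyl's inequality then gives, for each $i \geq 2$, $-p + \mu_m(B) \leq \mu_i(A) \leq -p + \mu_1(B)$, so that $|\mu_i(A) + p| \leq \|B\|_{\mathrm{op}}$. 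Since $p = o_m(\sqrt{mp})$, it suffices to show $\|B\|_{\mathrm{op}} \leq (2 + o_m(1))\sqrt{mp}$ a.a.s.$(m)$.

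For this I would estimate the moment $\mathbb{E}[\mathrm{tr}(B^{2k})]$ for a parameter $k = k(m) \to \infty$ to be chosen, using
$$\mathbb{E}[\mathrm{tr}(B^{2k})] = \sum_{i_0,\ldots,i_{2k-1}} \mathbb{E}\Bigl[\prod_{j=0}^{2k-1} B_{i_j i_{j+1}}\Bigr],$$
where the sum runs over closed walks of length $2k$ on $[m]$ (indices read cyclically). Only walks in which every unordered edge is traversed at least twice contribute. The dominant term comes from closed walks of length $2k$ that traverse exactly $k$ distinct edges each exactly twice on $k+1$ distinct vertices; these are supported on labelled plane trees and are counted by $C_k \cdot (m)_{k+1}$, where $C_k$ is the $k$-th Catalan number and $(m)_{k+1} = m(m-1)\cdots(m-k)$. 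Each contributes $(p(1-p))^k$, so this family yields $(1 + o(1)) \cdot m \cdot C_k \cdot (mp)^k \leq m(4mp)^k$. Combined with $\|B\|_{\mathrm{op}}^{2k} \leq \mathrm{tr}(B^{2k})$ and Markov's inequality, this gives, for any fixed $\epsilon > 0$,
$$\mathbb{P}\bigl(\|B\|_{\mathrm{op}} \geq (2+\epsilon)\sqrt{mp}\bigr) \leq \frac{(1 + o(1))\, m}{(1 + \epsilon/2)^{2k}},$$
which vanishes once $k \gg \log m$.

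The main obstacle is controlling the ``defective'' walks: those that traverse some edge more than twice, or revisit vertices in a non-tree-like pattern. The standard approach encodes a defective walk by its underlying tree-walk together with a bounded number of extra choices, and shows that each defect costs a factor of $O(k^2/(mp))$; the hypothesis $mp = \Omega_m(\log^6 m)$ is precisely what allows $k$ to be taken polylogarithmic in $m$ so that the total defect contribution remains of lower order than the Catalan term while $m / (1+\epsilon/2)^{2k} \to 0$. The Laplacian statement is then a routine consequence of the adjacency bound: by Lemma \ref{lem: almost regular G(m,p)} the graph $G$ is a.a.s.\ almost $mp$-regular, so $D(G)^{-1/2} A(G) D(G)^{-1/2} = (mp)^{-1} A(G) + K$ with $\|K\|_{\mathrm{op}} = o_m(1)$ (as in the proof of Lemma \ref{lem: bipartite almost regular spectral calculation}). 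Applying Weyl's inequality once more, Remark \ref{rmk: switching between eigenvalues of graphs} gives, for $i \neq 0$,
$$|1 - \lambda_i(G)| = |\mu_{i+1}(D^{-1/2} A D^{-1/2})| \leq (mp)^{-1}|\mu_{i+1}(A(G))| + o_m(1) = O\bigl((mp)^{-1/2}\bigr) + o_m(1) = o_m(1),$$
completing the proof.
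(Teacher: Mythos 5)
The paper offers no proof of this statement at all: it is imported verbatim as \cite[Theorem 1]{furedikomlos} (with a pointer to \cite{chungrandomgraph} for generalisations), so there is no internal argument to compare yours against. What you have written is a reconstruction of the cited external proof, and its architecture is sound: the centering $A=\mathbb{E}(A)+B$ with $\mathbb{E}(A)=p(J-I)$, the Weyl step reducing the non-top eigenvalues of $A$ to $\|B\|_{\mathrm{op}}$ up to the negligible shift $p=o_m(\sqrt{mp})$, the Catalan count $C_k\,(m)_{k+1}(p(1-p))^k$ for the dominant tree-like closed walks, and the Markov bound $m(1+\epsilon/2)^{-2k}$ with $k\gg\log m$ are all correct. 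The reduction of the Laplacian statement to the adjacency statement via almost $mp$-regularity and Remark \ref{rmk: switching between eigenvalues of graphs} is also correct and mirrors how the paper itself passes between $A$ and $D^{-1/2}AD^{-1/2}$ in Lemmas \ref{lem: bipartite almost regular spectral calculation} and \ref{lem: eigenvalue of red}.

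The one substantive caveat is that the step you defer --- bounding the contribution of walks that are not tree-like --- is not a routine remainder but the entire technical content of the theorem in this regime. The original F\"uredi--Koml\'os error term is of order $m^{1/3}\log m$, which only gives $2[1+o_m(1)]\sqrt{mp}$ when $mp\gg m^{2/3}\log^2 m$; obtaining the bound all the way down to $mp=\Omega_m(\log^6 m)$ requires the sharper defect bookkeeping of Vu (or Feige--Ofek), where one must verify that each defect costs a factor like $O(k^{O(1)}/\sqrt{mp})$ and that a choice such as $k\sim\log^2 m$ simultaneously kills the Markov term and keeps the defect contribution subdominant. Your sketch is consistent with that argument, but as written it asserts rather than proves it. Since the paper treats the whole theorem as a black box, this is an acceptable level of detail for the present purpose, but you should be aware that the clause ``the standard approach shows each defect costs $O(k^2/(mp))$'' is where all the work lives.
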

The eigenvalues of the bipartite version, $G(m_{1},m_{2},p)$ were analysed far more recently: see, e.g., \cite[Theorem A]{ashcroft2021eigenvalues}.
\begin{theorem}\label{lem: eigenvalue of ER bipartite random graph}
Let $m_{1}\geq 1$, $m_{2}=m_{2}(m_{1})$, and $p=p(m_{1})$ be such that: $m_{1}p=\Omega(\log^{6}m_{1}),$ $m_{2}p=\Omega(\log^{6}m_{2}).$ Let $G\sim G(m_{1},m_{2},p).$ Then with probability tending to $1$ as $m_{1}$ tends to infinity: 
$$\max\limits_{i\neq 0,m_{1}+m_{2}-1}\bigg\vert1-\lambda_{i}(G)\bigg\vert=o_{m_{1}}(1).$$
\end{theorem}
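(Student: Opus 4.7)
The strategy is to reduce the claim to bounding the second singular value of the bipartite adjacency matrix of $G$. Write the adjacency matrix of $G\sim G(m_{1},m_{2},p)$ in block form $A(G)=\bigl(\begin{smallmatrix}0 & B\\ B^{T}&0\end{smallmatrix}\bigr)$, where $B$ is the $m_{1}\times m_{2}$ matrix with independent $\mathrm{Ber}(p)$ entries. The nonzero eigenvalues of $A(G)$ are precisely $\pm\sigma_{i}(B)$, where $\sigma_{1}(B)\ge\sigma_{2}(B)\ge\cdots$ are the singular values of $B$; in particular $\mu_{1}(A(G))=\sigma_{1}(B)=-\mu_{m_{1}+m_{2}}(A(G))$ and $|\mu_{i}(A(G))|\le\sigma_{2}(B)$ for every $2\le i\le m_{1}+m_{2}-1$. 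By Lemma~\ref{lem: almost regular G(m,n,p)}, $G$ is \aas{m_{1}} almost $(m_{2}p,m_{1}p)$-regular; applying Lemma~\ref{lem: bipartite almost regular spectral calculation} together with Remark~\ref{rmk: switching between eigenvalues of graphs} therefore reduces the theorem to showing that $\sigma_{2}(B)=o_{m_{1}}(p\sqrt{m_{1}m_{2}})$ \aas{m_{1}}.

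To this end, decompose $B=pJ+E$, where $J$ is the $m_{1}\times m_{2}$ all-ones matrix and $E$ has centered Bernoulli entries of variance $p(1-p)$. The rank-one matrix $pJ$ has unique nonzero singular value $p\sqrt{m_{1}m_{2}}$, so Weyl's inequality for singular values gives $\sigma_{2}(B)\le\sigma_{2}(pJ)+\sigma_{1}(E)=\|E\|$. Under the hypothesis $m_{i}p=\Omega(\log^{6}m_{i})$ one has $p\min(m_{1},m_{2})\to\infty$, and hence $\sqrt{\max(m_{1},m_{2})\,p}=o(p\sqrt{m_{1}m_{2}})$. The problem thus reduces to the classical estimate $\|E\|=O(\sqrt{\max(m_{1},m_{2})\,p})$ \aas{m_{1}}.

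I would establish this estimate by a bipartite adaptation of the trace method of Füredi--Komlós \cite{furedikomlos}. Symmetrising via $\widetilde{E}=\bigl(\begin{smallmatrix}0 & E\\ E^{T}&0\end{smallmatrix}\bigr)$ gives $\|E\|=\|\widetilde{E}\|$, so that for any positive integer $k$,
\[
\mathbb{E}\,\|\widetilde{E}\|^{2k}\le\mathbb{E}\,\operatorname{tr}(\widetilde{E}^{2k}).
\]
Expanding the right-hand side as a sum over closed walks of length $2k$ in $K_{m_{1},m_{2}}$ in which every edge is traversed at least twice, and grouping walks by their underlying tree, the dominant contribution comes from double tree walks, giving an expectation of order $(m_{1}+m_{2})\bigl(2\sqrt{\max(m_{1},m_{2})p(1-p)}\bigr)^{2k}$. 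Taking $k$ of order $\log(m_{1}+m_{2})$ and applying Markov's inequality then yields the desired bound.

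The main obstacle is the combinatorial bookkeeping in the trace expansion: the bipartite constraint forces walks to have even length and restricts the colouring of tree vertices, and one needs to verify carefully that the $\log^{6}$ hypothesis is enough to absorb the polylogarithmic losses incurred in taking $k$ grow with $m_{1}$ (this is exactly where the exponent $6$ shows up, mirroring the one in Theorem~\ref{lem: eigenvalue of ER random graph}). A cleaner but less elementary route---presumably the one taken in \cite{ashcroft2021eigenvalues}---is simply to invoke a modern matrix-concentration inequality such as the Bandeira--van Handel sharp bound on the operator norm of a random matrix with independent entries, which yields the required control of $\|E\|$ directly from the entrywise variance $p(1-p)$ and the trivial $L^{\infty}$ bound on the entries, bypassing the combinatorics entirely.
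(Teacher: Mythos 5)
Your argument is correct in outline, but it is worth noting that the paper does not prove this statement at all: it is imported wholesale as \cite[Theorem A]{ashcroft2021eigenvalues}, so there is no internal proof to compare against. What you supply is a legitimate self-contained route. Your reduction is sound: the nonzero spectrum of $A(G)$ is $\{\pm\sigma_{i}(B)\}$, so excluding $i=0$ and $i=m_{1}+m_{2}-1$ (which carry $\pm\sigma_{1}(B)$, i.e.\ $\lambda=0$ and $\lambda=2$) leaves exactly the eigenvalues controlled by $\sigma_{2}(B)$; combining Lemma \ref{lem: almost regular G(m,n,p)}, Lemma \ref{lem: bipartite almost regular spectral calculation} and Remark \ref{rmk: switching between eigenvalues of graphs} correctly converts the claim into $\sigma_{2}(B)=o_{m_{1}}(p\sqrt{m_{1}m_{2}})$. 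The centering $B=pJ+E$ with Weyl's inequality for singular values, and the comparison $\sqrt{\max(m_{1},m_{2})p}=o(p\sqrt{m_{1}m_{2}})$ because $p\min(m_{1},m_{2})\to\infty$, are both right. The remaining norm bound $\|E\|=O(\sqrt{\max(m_{1},m_{2})p})$ is exactly the content you would need to prove; the trace-method sketch is the classical route (and the bipartite bookkeeping is genuinely the hard part, which you have honestly flagged rather than resolved), while the Bandeira--van Handel inequality closes it immediately from the row/column variances $\sqrt{m_{2}p(1-p)}$, $\sqrt{m_{1}p(1-p)}$ plus an $O(\sqrt{\log(m_{1}+m_{2})})$ term that is dominated by $p\sqrt{m_{1}m_{2}}\geq\log^{3}m_{1}\log^{3}m_{2}$. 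So the second route is complete as stated; the first is a credible but unfinished programme.

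One small caveat you inherit from the theorem statement itself: your appeal to Lemma \ref{lem: almost regular G(m,n,p)} needs $\min\{m_{1},m_{2}\}p=\Omega(\log\max\{m_{1},m_{2}\})$, which does not literally follow from $m_{i}p=\Omega(\log^{6}m_{i})$ when $m_{2}$ grows super-polynomially in $m_{1}$ (in that regime the graph can even have isolated vertices on the large side, and the normalised Laplacian statement degenerates). This is an imprecision in the hypotheses as recorded in the paper rather than a flaw in your reasoning, and it is harmless in the only place the theorem is used (Lemma \ref{lem: eigenvalue of bired}, where $m_{2}=(2n-1)m_{1}$), but a careful write-up should state the extra hypothesis explicitly.
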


\subsection{Spectra of unions of regular graphs}
The purpose of this subsection is to analyse the spectral distribution of unions of three graphs with relatively high first eigenvalue. This is already known when all three graphs share the same vertex set.
\begin{lemma}\label{lem: eigenvalue of union of three graphs on same vertex set}\cite[p. 665]{Zuk}
Let $G_{1},G_{2},G_{3}$ be $d$-regular graphs on the same vertex set, and suppose $\lambda_{1}(G_{i})>1-c$ for each $i$. Then $$\lambda_{1}(G_{1}\cup G_{2}\cup G_{3})\geq 1-c.$$
\end{lemma}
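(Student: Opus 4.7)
The plan is to reduce the problem to Weyl's inequality by exploiting the crucial fact that, since all three graphs are $d$-regular on the same vertex set $V$, they share the all-ones vector $\mathbf{1}$ as a common leading eigenvector of their normalized adjacency matrices. This alignment is exactly what prevents the naive application of Weyl's inequality from failing (a direct bound on $\mu_2$ of a sum in terms of the $\mu_2$'s of the summands is not available in general, because the top eigenvalues could all conspire together).

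First, I would set $M_i := \tfrac{1}{d}A(G_i)$, so that $M_i\mathbf{1} = \mathbf{1}$ and $\mu_1(M_i) = 1$. The hypothesis $\lambda_1(G_i) > 1-c$ translates, via $\lambda_1(G_i) = 1 - \mu_2(M_i)$, into $\mu_2(M_i) < c$ for each $i$. The union $G = G_1\cup G_2\cup G_3$ is $3d$-regular with $A(G) = A(G_1)+A(G_2)+A(G_3)$, hence
\[
\tfrac{1}{3d}A(G) \;=\; \tfrac{1}{3}\bigl(M_1 + M_2 + M_3\bigr),
\]
and showing $\lambda_1(G)\geq 1-c$ is, by Remark \ref{rmk: switching between eigenvalues of graphs}, equivalent to showing $\mu_2\bigl(\tfrac{1}{3d}A(G)\bigr) \leq c$.

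The key step is to subtract off the rank-one projection $P := \tfrac{1}{|V|}\mathbf{1}\mathbf{1}^T$ onto the common leading eigenvector. Since each $M_i$ preserves $\mathbf{1}$ with eigenvalue $1$ and is symmetric, the matrix $M_i - P$ has the same eigenvalues as $M_i$ on $\mathbf{1}^\perp$ and eigenvalue $0$ on $\mathbf{1}$; in particular
\[
\mu_1(M_i - P) \;=\; \max\bigl\{\mu_2(M_i),\, 0\bigr\} \;<\; c.
\]
Similarly, $\tfrac{1}{3d}A(G) - P$ has leading eigenvalue $\max\{\mu_2(\tfrac{1}{3d}A(G)),0\}$. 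Now $\tfrac{1}{3d}A(G) - P = \tfrac{1}{3}\sum_{i=1}^3 (M_i - P)$, and applying Weyl's inequality twice gives
\[
\mu_1\bigl(\tfrac{1}{3d}A(G) - P\bigr) \;\leq\; \tfrac{1}{3}\sum_{i=1}^{3}\mu_1(M_i - P) \;<\; c,
\]
which yields $\mu_2\bigl(\tfrac{1}{3d}A(G)\bigr) < c$, and hence $\lambda_1(G_1\cup G_2\cup G_3) > 1 - c$ as required.

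There is no serious obstacle here: once one recognises that all three graphs share $\mathbf{1}$ as their Perron eigenvector, the argument is a short application of Weyl's inequality to the rank-one corrections $M_i - P$. The only mildly subtle point is the need to project out $\mathbf{1}$ before invoking Weyl, since otherwise the top eigenvalues $\mu_1(M_i) = 1$ dominate and give no useful bound on $\mu_2$ of the sum.
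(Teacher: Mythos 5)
Your argument is correct. The paper itself does not prove this lemma (it is quoted from \.{Z}uk), but it does prove the generalization, Lemma \ref{thm: eigenvalue of union of three graphs}, and there the route is the Courant--Fischer theorem: one takes the optimal test vector $\vect{\phi}$ for $\mu_{2}(\Lambda)$, decomposes it orthogonally relative to the leading eigenvectors, and bounds each Rayleigh quotient $\langle \Lambda_{i}\vect{\phi},\vect{\phi}\rangle$ separately. Your version packages the same core idea differently: since each $M_{i}$ is symmetric and fixes $\mathbf{1}$, the matrix $M_{i}-P$ is block-diagonal with respect to $\mathrm{span}(\mathbf{1})\oplus\mathbf{1}^{\perp}$, so indeed $\mu_{1}(M_{i}-P)=\max\{\mu_{2}(M_{i}),0\}$, and Weyl's inequality applied to the sum of the rank-one corrections finishes the proof. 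Both arguments hinge on the graphs sharing $\mathbf{1}$ as Perron eigenvector so that the estimate can be confined to $\mathbf{1}^{\perp}$; yours is tidier in the equal-vertex-set, equal-degree setting, whereas the paper's Rayleigh-quotient formulation is the one that survives the passage to the mixed bipartite situation of Lemma \ref{thm: eigenvalue of union of three graphs}, where the three normalized operators no longer have a common leading eigenvector and the cross terms must be computed by hand. Two minor points: your final inequality is strict, which is harmless since it is stronger than the stated $\geq 1-c$; and the step $\max\{\mu_{2}(M_{i}),0\}<c$ implicitly requires $c>0$, which is the only regime in which the lemma is used.
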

We now wish to extend this to the case where the graphs are relatively well connected, and they do not share the same vertex set. We first recall (a partial consequence of) the Courant-Fischer Theorem, as follows.

\begin{theorem*}[Courant-Fischer Theorem]
Let $M$ be a symmetric $m\times m$ matrix with first eigenvalue $\mu_{1}(M)$ and corresponding eigenvector $\underline{e}$. Then 
$$\mu_{2} (M)= \max\limits_{\substack{\vect{x}\perp \underline{e}\\ \vert \vert \vect{x}\vert \vert =1}}\langle M\vect{x},\vect{x}\rangle=\max\limits_{\vect{x}\perp \underline{e}}\dfrac{\langle M\vect{x},\vect{x}\rangle}{\langle \vect{x},\vect{x}\rangle}.$$
\end{theorem*}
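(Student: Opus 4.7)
The plan is to invoke the spectral theorem to diagonalise $M$ and then exploit the orthogonality constraint $\vect{x}\perp \underline{e}$ directly. Since $M$ is a real symmetric $m\times m$ matrix, I would first produce an orthonormal basis $\underline{e}_{1},\underline{e}_{2},\ldots,\underline{e}_{m}$ of eigenvectors of $M$, ordered so that the corresponding eigenvalues satisfy $\mu_{1}(M)\geq \mu_{2}(M)\geq \cdots \geq \mu_{m}(M)$, and (after normalisation) taking $\underline{e}_{1}=\underline{e}$.

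Next, any $\vect{x}$ with $\vect{x}\perp \underline{e}$ may be expanded as $\vect{x}=\sum_{i=2}^{m}c_{i}\underline{e}_{i}$ for some scalars $c_{i}$, since the $\underline{e}_{i}$ are an orthonormal basis. A direct calculation using orthonormality then gives
$$\langle M\vect{x},\vect{x}\rangle =\sum_{i=2}^{m}c_{i}^{2}\mu_{i}(M),\qquad \langle \vect{x},\vect{x}\rangle =\sum_{i=2}^{m}c_{i}^{2}.$$
Therefore the Rayleigh quotient $\langle M\vect{x},\vect{x}\rangle/\langle \vect{x},\vect{x}\rangle$ is a convex combination of the eigenvalues $\{\mu_{i}(M):i\geq 2\}$, and hence at most $\mu_{2}(M)$. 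Equality is realised by taking $\vect{x}=\underline{e}_{2}$, which lies in the orthogonal complement of $\underline{e}$ and has unit norm, proving both the maximum and its attainment.

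To conclude, the equivalence of the two displayed expressions for $\mu_{2}(M)$ is just homogeneity of the Rayleigh quotient: scaling $\vect{x}$ by any nonzero real does not change the ratio, so maximising the ratio over all $\vect{x}\perp\underline{e}$ with $\vect{x}\ne 0$ yields the same value as maximising $\langle M\vect{x},\vect{x}\rangle$ over the unit sphere in $\underline{e}^{\perp}$.

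The main obstacle is essentially absent, since this is a textbook consequence of the spectral theorem. The only mild subtlety is when $\mu_{1}(M)$ has multiplicity greater than one: then $\mu_{2}(M)=\mu_{1}(M)$, and the statement remains correct because $\underline{e}^{\perp}$ still contains eigenvectors of $M$ with eigenvalue $\mu_{1}(M)$, which realise the claimed maximum. This case should be flagged in the write-up to make clear that the choice of which unit eigenvector in the top eigenspace one calls $\underline{e}$ is immaterial.
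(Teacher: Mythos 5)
Your proof is correct: it is the standard spectral-theorem argument (expand $\vect{x}\perp\underline{e}$ in an orthonormal eigenbasis, observe the Rayleigh quotient is a convex combination of $\mu_{2},\ldots,\mu_{m}$, and realise equality at $\underline{e}_{2}$), and your remark about the multiplicity of $\mu_{1}$ is apt. The paper simply recalls this statement without proof, so there is nothing to compare against; your write-up would serve as a complete justification.
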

Hence we can prove the following (recall that for a bipartite graph $G$, $V_{1}(G)$ and $V_{2}(G)$ are the vertex partitions of $G$).
\begin{lemma}\label{thm: eigenvalue of union of three graphs}
Let $G_{1},G_{2},G_{3}$ be graphs such that:
\begin{enumerate}[label=$\roman *)$]
    \item $G_{2},G_{3}$ are bipartite, $V(G_{1})=V_{1}(G_{2})=V_{1}(G_{3})$, and $V_{2}(G_{2})=V_{2}(G_{3}),$
    \item $G_{1}$ is $2d_{1}$-regular, and $G_{2},G_{3}$ are $(d_{1},d_{2})$-regular,
    \item and for $i=1,2,3$ there exists $0\leq c_{i}<1$ with $\lambda_{1}(G_{i})\geq 1-c_{i}$.
\end{enumerate}
Then $$\lambda_{1}(G_{1}\cup G_{2}\cup G_{3})\geq 1-\dfrac{\sqrt{2}c_{1}+c_{2}+c_{3}}{2\sqrt{2}}.$$
\end{lemma}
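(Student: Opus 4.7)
The plan is to apply Courant--Fischer directly to $M := D^{-1/2}AD^{-1/2}$ with $D = D(G_1\cup G_2\cup G_3)$ and $A = A(G_1\cup G_2\cup G_3)$, and to exploit a cancellation forced by the orthogonality constraint. By the regularity hypotheses, $D$ equals $4d_1 I$ on $V_1 := V(G_1)$ and $2d_2 I$ on $V_2 := V_2(G_2) = V_2(G_3)$, and one obtains the clean decomposition
\[
M \;=\; \tfrac{1}{2}\tilde{M}_1 \;+\; \tfrac{1}{2\sqrt{2}}(\tilde{M}_2 + \tilde{M}_3),
\]
where $\tilde{M}_i := D(G_i)^{-1/2}A(G_i)D(G_i)^{-1/2}$, extended by zero outside $V(G_i)$. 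The hypothesis $\lambda_1(G_i)\geq 1-c_i$ gives $\mu_2(\tilde{M}_i) \leq c_i$, and for the bipartite $\tilde{M}_2, \tilde{M}_3$ the standard sign-flip on one side yields spectral symmetry about $0$; hence every $\tilde{M}_i$-eigenvalue other than the extremal ones ($1$ for $i=1$, and $\pm 1$ for $i=2,3$) lies in $[-c_i,c_i]$. The vector $D^{1/2}\mathbf{1}$ is a unit-eigenvalue eigenvector of $M$.

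Next, I would take any unit vector $\mathbf{x}\perp D^{1/2}\mathbf{1}$ and split it as $\mathbf{x} = \alpha_1\hat{\mathbf{1}}_{V_1} + \mathbf{y}_1 + \alpha_2\hat{\mathbf{1}}_{V_2} + \mathbf{y}_2$, where $\hat{\mathbf{1}}_{V_i} := \mathbf{1}_{V_i}/\sqrt{|V_i|}$ and each $\mathbf{y}_i$ is supported on $V_i$ and orthogonal to $\mathbf{1}_{V_i}$. Writing out $\langle\mathbf{x}, D^{1/2}\mathbf{1}\rangle = 0$ and using the bipartite edge-count identity $d_1|V_1| = d_2|V_2|$, this single constraint forces $\alpha_2 = -\sqrt{2}\,\alpha_1$, so $\alpha_1^2 : \alpha_2^2 = 1:2$ and $\alpha_1\alpha_2 \leq 0$. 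Each $\langle\tilde{M}_i\mathbf{x},\mathbf{x}\rangle$ is then estimated by invariant-subspace decomposition: $\tilde{M}_1$ annihilates $V_2$ and has $\mu_2\leq c_1$ on $V_1$, yielding $\langle\tilde{M}_1\mathbf{x},\mathbf{x}\rangle \leq \alpha_1^2 + c_1\|\mathbf{y}_1\|^2$; for $i=2,3$, the same edge-count identity shows $\tilde{M}_i$ swaps $\hat{\mathbf{1}}_{V_1}$ and $\hat{\mathbf{1}}_{V_2}$, so $\mathrm{span}\{\hat{\mathbf{1}}_{V_1}, \hat{\mathbf{1}}_{V_2}\}$ is the full $\pm 1$-eigenspace of $\tilde{M}_i$, contributing exactly $2\alpha_1\alpha_2$ plus a residue bounded by $c_i(\|\mathbf{y}_1\|^2+\|\mathbf{y}_2\|^2)$.

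Assembling gives $\langle M\mathbf{x},\mathbf{x}\rangle \leq \bigl[\tfrac{1}{2}\alpha_1^2 + \sqrt{2}\,\alpha_1\alpha_2\bigr] + \tfrac{c_1}{2}\|\mathbf{y}_1\|^2 + \tfrac{c_2+c_3}{2\sqrt{2}}(\|\mathbf{y}_1\|^2+\|\mathbf{y}_2\|^2)$. With $t := \alpha_1^2+\alpha_2^2$, the bracketed ``top-eigenspace'' term collapses to $\tfrac{t}{6}-\tfrac{2t}{3} = -\tfrac{t}{2} \leq 0$ by the pinned ratio and can simply be dropped, while the residue is at most $(1-t)\bigl[\tfrac{c_1}{2}+\tfrac{c_2+c_3}{2\sqrt{2}}\bigr] \leq \tfrac{\sqrt{2}c_1+c_2+c_3}{2\sqrt{2}}$. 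Courant--Fischer then gives $\mu_2(M) \leq \tfrac{\sqrt{2}c_1+c_2+c_3}{2\sqrt{2}}$, and Remark~\ref{rmk: switching between eigenvalues of graphs} converts this into the claimed lower bound on $\lambda_1(G_1\cup G_2\cup G_3)$. The principal subtlety is the sign cancellation in the top-eigenspace contributions: the bipartite $\pm 1$-eigenvalues of $\tilde{M}_2,\tilde{M}_3$ would individually contribute a prohibitively large term $\tfrac{1}{2\sqrt{2}}\cdot 2\alpha_1\alpha_2$, but the orthogonality $\mathbf{x}\perp D^{1/2}\mathbf{1}$ pins its sign so that it precisely swallows the $\tfrac{1}{2}\alpha_1^2$ contribution coming from $\tilde{M}_1$.
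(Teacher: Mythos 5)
Your proposal is correct and follows essentially the same route as the paper's proof: the same decomposition $\Lambda=\tfrac12\Lambda_1+\tfrac{1}{2\sqrt2}(\Lambda_2+\Lambda_3)$, the same splitting of the test vector into constant-on-parts and mean-zero components, the same use of $\langle\vect{x},D^{1/2}\vect{1}\rangle=0$ to pin $\alpha_2=-\sqrt2\,\alpha_1$, and the same cancellation of the $\tfrac12\alpha_1^2$ term against the $\sqrt2\,\alpha_1\alpha_2$ contribution of the bipartite pieces. The paper carries out the computation with unnormalised $\alpha,\beta$ and $\gamma=3\alpha^2 m_1$ in place of your $t$, but the estimates and the final bound are identical.
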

\begin{proof}
Let $\vect{1}_{l}$ be the all $1$ vector with $l$ entries, and let $G=G_{1}\cup G_{2}\cup G_{3}$. Let $V_{1}=V(G_{1})= V_{1}(G_{2})=V_{1}(G_{3})$ and $V_{2}= V_{2}(G_{2})=V_{2}(G_{3})$. Let $m_{1}=\vert V_{1}\vert$ and $m_{2}=\vert V_{2}\vert$. For $i=1,2,3$, let $\Lambda_{i}=D_{i}^{-1\slash 2}A_{i}D_{i}^{-1\slash 2}$ where $D_{i}=D(G_{i}),A_{i}=A(G_{i})$ (here we view $G_{1}$ as a graph on $V_{1}\sqcup V_{2}$).  Let $D=D(G)$, $A=A(G)$, and consider $\Lambda = D(G)^{-1\slash 2}A(G)D(G)^{-1\slash 2}$, so that $$\Lambda = \frac{1}{2}\Lambda_{1}+\frac{1}{2\sqrt{2}} \Lambda_{2}+\frac{1}{2\sqrt 2}\Lambda_{3}:$$
each of $\Lambda, \Lambda_{1},\Lambda_{2},\Lambda_{3}$ is symmetric and hence self-adjoint.
We remark again that $\mu_{2}(\Lambda_{i})=1-\lambda_{1}(G_{i}).$ We also note that $m_{2}d_{2}=m_{1}d_{1},$ so that $d_{2}=m_{1}d_{1}\slash m_{2}$.

Now, we consider the first eigenvalues of the matrices $\Lambda$ and $\Lambda_{i}$. 
The eigenvector corresponding to $\mu_{1}(\Lambda)=1$ is $$D^{1\slash 2}\vect{1}_{m_{1}+m_{2}}=\begin{pmatrix}
2\sqrt{d_{1}}\vect{1}_{m_{1}}\\
\sqrt{2d_{2}}\vect{1}_{m_{2}}
\end{pmatrix}.$$ The eigenvector corresponding to $\mu_{1}(\Lambda_{2})=1$ and $\mu_{1}(\Lambda_{3})=1$ is $$D_{2}^{1\slash 2}\vect{1}_{m_{1}+m_{2}}=D_{3}^{1\slash 2}\vect{1}_{m_{1}+m_{2}}=\begin{pmatrix}
\sqrt{d_{1}}\vect{1}_{m_{1}}\\
\sqrt{d_{2}}\vect{1}_{m_{2}}
\end{pmatrix}.$$ The eigenvector corresponding to $\mu_{1}(\Lambda_{1})=1$ is $$D_{1}^{1\slash 2}\vect{1}_{m_{1}+m_{2}}=\begin{pmatrix}
\sqrt{2d_{1}}\vect{1}_{m_{1}}\\
0\end{pmatrix}.$$ 

Let $\vect{\phi}$ be a vector with $\norm{\phi}=1,$ $\vect{\phi}\cdot D^{1\slash 2}\vect{1}_{m_{1}+m_{2}}=0$, and $\mu_{2} (\Lambda)= \langle \Lambda\vect{\phi},\vect{\phi}\rangle,$ which exists by the Courant-Fischer Theorem. We may write $$\vect{\phi}=\begin{pmatrix}
\alpha \vect{1}_{m_{1}}+\vect{u}\\
\beta\vect{1}_{m_{2}}+\vect{v}
\end{pmatrix}$$
where $\vect{u}\cdot \vect{1}_{m_{1}}=\vect{v}\cdot\vect{1}_{m_{2}}=0.$ As $$\vect{\phi}\cdot D^{1\slash 2}\vect{1}_{m_{1}+m_{2}}=2\sqrt{d_{1}}\alpha m_{1}+\sqrt{2d_{2}}\beta m_{2}=2\sqrt{d_{1}}\alpha m_{1}+\sqrt{2d_{1}m_{1}\slash m_{2}}\beta m_{2},$$
we see $$\beta = \dfrac{-\sqrt{2m_{1}}\alpha }{\sqrt{m_{2}}}.$$ Let 
$$\vect{\phi_{1}}=\begin{pmatrix}
\alpha \vect{1}_{m_{1}}\\
\beta \vect{1}_{m_{2}}
\end{pmatrix},\;\vect{\phi_{2}}=\begin{pmatrix}
\vect{u}\\
\vect{v}
\end{pmatrix},$$
so that $\vect{\phi_{1}}\cdot D^{1\slash 2}\vect{1}_{m_{1}+m_{2}}=\vect{\phi_{2}}\cdot D^{1\slash 2}\vect{1}_{m_{1}+m_{2}}=0.$
Write $\gamma = \norm{\phi_{1}}^{2},$ with $\norm{\phi_{2}}^{2}=1-\gamma.$
Note that 
\begin{equation*}
    \gamma=\alpha^{2}m_{1}+\beta^{2}m_{2}=3\alpha^{2}m_{1},
\end{equation*}
so that $3\alpha^{2}m_{1}\leq 1.$
We now calculate:
\begin{equation*}
    \begin{split}
        \langle \Lambda_{1}\vect{\phi_{1}},\vect{\phi_{1}}\rangle =\begin{pmatrix}
\alpha \vect{1}_{m_{1}}\\
0
\end{pmatrix}\cdot  \begin{pmatrix}
\alpha \vect{1}_{m_{1}}\\
\beta \vect{1}_{m_{2}}
\end{pmatrix}
=\alpha^{2}m_{1}.
    \end{split}
\end{equation*}
Secondly 
\begin{equation*}
    \begin{split}
        \langle \Lambda_{1}\vect{\phi_{1}},\vect{\phi_{2}}\rangle = \begin{pmatrix}
\alpha \vect{1}_{m_{1}}\\
0
\end{pmatrix}\cdot \begin{pmatrix}
\vect{u}\\
\vect{v}
\end{pmatrix}=\alpha \vect{1}_{m_{1}}\cdot \vect{u}=0.
    \end{split}
\end{equation*}
Since $\Lambda_{1}$ is self-adjoint, $\langle \vect{\phi_{1}},\Lambda_{1}\vect{\phi_{2}}\rangle =\langle \Lambda_{1}\vect{\phi_{1}},\vect{\phi_{2}}\rangle =0.$ Also, since $\vect{u}\cdot D_{1}^{1\slash 2}\vect{1}_{m_{1}}=0,$ we have by the Courant-Fischer Theorem:
\begin{equation*}
        \langle \Lambda_{1}\vect{\phi_{2}},\vect{\phi_{2}}\rangle=\langle \Lambda_{1}'\vect{u},\vect{u}\rangle \leq \mu_{2}(\Lambda_{1})\norm{u}^{2}=c_{1} \norm{u}^{2}\leq c_{1} \norm{\phi_{2}}^{2}=c_{1}(1-\gamma ),
        \end{equation*}
where $\Lambda_{1}'$ is $D(G_{1})^{-1\slash 2}A(G_{1})D(G_{1})^{-1\slash 2}$, with $G_{1}$ considered as a graph on the vertex set $V_{1}.$

We now perform the same calculations for $\Lambda_{2}$. Firstly, for some matrix $B_{2}$
\begin{equation*}
\begin{split}
    \Lambda_{2}\vect{\phi_{1}}&=\dfrac{1}{\sqrt{d_{1}d_{2}}}\begin{pmatrix}0&B_{2}\\
    B_{2}^{T}&0\end{pmatrix}\begin{pmatrix}
\alpha \vect{1}_{m_{1}}\\
\beta \vect{1}_{m_{2}}
\end{pmatrix}=\begin{pmatrix}
\beta \sqrt{\dfrac{d_{1}}{d_{2}}}\vect{1}_{m_{1}}\\
\alpha\sqrt{\dfrac{d_{2}}{d_{1}}} \vect{1}_{m_{2}}
\end{pmatrix}=\begin{pmatrix}
\beta \sqrt{\dfrac{m_{2}}{m_{1}}}\vect{1}_{m_{1}}\\
\alpha\sqrt{\dfrac{m_{1}}{m_{2}}} \vect{1}_{m_{2}}
\end{pmatrix},
\end{split}
\end{equation*}
so that 
\begin{equation*}
    \begin{split}
        \langle \Lambda_{2}\vect{\phi_{1}},\vect{\phi_{1}}\rangle = \begin{pmatrix}
\beta \sqrt{\dfrac{m_{2}}{m_{1}}}\vect{1}_{m_{1}}\\
\alpha\sqrt{\dfrac{m_{1}}{m_{2}}} \vect{1}_{m_{2}}
\end{pmatrix}\cdot \begin{pmatrix}
\alpha \vect{1}_{m_{1}}\\
\beta \vect{1}_{m_{2}}\
\end{pmatrix}
=2\alpha \beta \sqrt{m_{1}m_{2}}.
    \end{split}
\end{equation*}
Next, by the Courant-Fischer Theorem, $\langle \Lambda_{2}\vect{\phi_{2}},\vect{\phi_{2}}\rangle\leq c_{2} \norm{\vect{\phi_{2}}}^{2}=c_{2}(1-\gamma)$ (since $\vect{\phi_{2}}\cdot D_{2}^{1\slash 2 }\vect{1}_{m_{1}+m_{2}}=0$). Furthermore,

\begin{equation*}
    \begin{split}
        \langle\Lambda_{2}\vect{\phi_{1}}, \vect{\phi_{2}}\rangle&=\begin{pmatrix}
\beta \sqrt{\dfrac{m_{2}}{m_{1}}}\vect{1}_{m_{1}}\\
\alpha\sqrt{\dfrac{m_{1}}{m_{2}}} \vect{1}_{m_{2}}
\end{pmatrix}\cdot \begin{pmatrix}
\vect{u}\\
\vect{v}
\end{pmatrix} = 
\beta\sqrt{\dfrac{m_{2}}{m_{1}}}\vect{1}_{m_{1}}\cdot \vect{u}+ \alpha\sqrt{\dfrac{m_{1}}{m_{2}}}\vect{1}_{m_{2}}\cdot \vect{v} =0,
\end{split}
\end{equation*}
since $\vect{1}_{m_{1}}\cdot \vect{u}=\vect{1}_{m_{2}}\cdot \vect{v}=0.$ Finally, since $\Lambda_{2}$ is symmetric and hence self-adjoint, we see that
\begin{equation*}
    \langle \Lambda_{2}\vect{\phi_{2}},\vect{\phi_{1}}\rangle =\langle \vect{\phi_{2}},\Lambda_{2}\vect{\phi_{1}}\rangle=0.
\end{equation*}
We can perform similar calculations for $\Lambda_{3}$.
Putting this all together, we have
\begin{equation*}
\begin{split}
  \langle \Lambda_{1}\vect{\phi}, \vect{\phi}\rangle &\leq   \alpha^{2}m_{1}+c_{1}(1-\gamma),
\end{split}
\end{equation*}
\begin{equation*}
\begin{split}
  \langle \Lambda_{2}\vect{\phi}, \vect{\phi}\rangle& \leq   2\alpha \beta \sqrt{m_{1}m_{2}}+ c_{2}(1-\gamma),
\end{split}
\end{equation*}
\begin{equation*}
\begin{split}
  \langle \Lambda_{3}\vect{\phi}, \vect{\phi}\rangle& \leq    2\alpha \beta \sqrt{m_{1}m_{2}}+ c_{3}(1-\gamma).
\end{split}
\end{equation*}
We calculate 
\begin{equation*}
\dfrac{1}{\sqrt{2}}\alpha \beta\sqrt{m_{1}m_{2}}=  -\alpha^{2}\sqrt{\dfrac{m_{1}}{m_{2}}}\sqrt{m_{1}m_{2}}=-\alpha^{2}m_{1}.
\end{equation*}
Therefore 
\begin{equation*}
\begin{split}
  \langle \Lambda\vect{\phi}, \vect{\phi}\rangle &=\dfrac{1}{2}\langle \Lambda_{1}\vect{\phi}, \vect{\phi}\rangle+\dfrac{1}{2\sqrt{2}}\langle \Lambda_{2}\vect{\phi}, \vect{\phi}\rangle+\dfrac{1}{2\sqrt{2}}\langle \Lambda_{3}\vect{\phi}, \vect{\phi}\rangle\\
  &\leq \dfrac{1}{2}c_{1}(1-\gamma)+ \dfrac{1}{2}\alpha^{2}m_{1}+\dfrac{1}{\sqrt{2}}\alpha \beta\sqrt{m_{1}m_{2}}+ \dfrac{1}{2\sqrt{2}}c_{2}(1-\gamma)
  +\dfrac{1}{\sqrt{2}}\alpha \beta\sqrt{m_{1}m_{2}}\\ &\hspace*{20 pt}+\dfrac{1}{2\sqrt{2}}c_{3}(1-\gamma)\\
  &=\dfrac{1}{2}\alpha^{2}m_{1}-2\alpha^{2}m_{1}+\dfrac{1-\gamma }{2\sqrt{2}}(\sqrt{2}c_{1}+ c_{2}+c_{3})\\
  &=\dfrac{-3}{2}\alpha^{2}m_{1}+\dfrac{1-\gamma }{2\sqrt{2}}(\sqrt{2}c_{1}+ c_{2}+c_{3})\\
  &=-\dfrac{1}{2}\gamma +\dfrac{1-\gamma }{2\sqrt{2}}(\sqrt{2}c_{1}+ c_{2}+c_{3})\\
  &\leq \dfrac{\sqrt{2}c_{1}+ c_{2}+c_{3} }{2\sqrt{2}},
  \end{split}
\end{equation*}
since $0\leq \gamma \leq 1.$
  
  As $\vect{\phi}$ was chosen with $\mu_{2}(\Lambda)=\langle \Lambda\vect{\phi}, \vect{\phi}\rangle$, we see that 
$\mu_{2}(\Lambda)\leq \dfrac{\sqrt{2}c_{1}+c_{2}+c_{3}}{2\sqrt{2}},$ and hence
$$\lambda_{1}(G)=1-\mu_{2}(\Lambda)\geq 1-\dfrac{\sqrt{2}c_{1}+c_{2}+c_{3} }{2\sqrt{2}}.$$

\end{proof}

 \begin{lemma}\label{cor: eigenvalue of union of three graphs}
 Let $G_{i}$, $c_{i}$ be as above. Suppose $c_{1}=\epsilon, c_{2}=c_{3}=\epsilon +\frac{1}{3} $ for some $\epsilon<1\slash 100.$ Then 
 $$\lambda_{1}(G_{1}\cup G_{2}\cup G_{3})\geq\dfrac{3}{4}.$$
 \end{lemma}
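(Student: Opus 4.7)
The plan is essentially to apply Lemma \ref{thm: eigenvalue of union of three graphs} as a black box and check the arithmetic, since the hypotheses of that lemma are exactly the hypotheses of the corollary. The only thing to do is substitute $c_1 = \epsilon$, $c_2 = c_3 = \epsilon + 1/3$ into the bound
$$\lambda_1(G_1 \cup G_2 \cup G_3) \geq 1 - \frac{\sqrt{2}c_1 + c_2 + c_3}{2\sqrt{2}},$$
and verify that the right-hand side is at least $3/4$ under the assumption $\epsilon < 1/100$.

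Concretely, I would first compute $\sqrt{2}c_1 + c_2 + c_3 = (\sqrt{2} + 2)\epsilon + 2/3$, so the bound becomes
$$\lambda_1(G_1 \cup G_2 \cup G_3) \geq 1 - \frac{2/3 + (\sqrt{2} + 2)\epsilon}{2\sqrt{2}} = 1 - \frac{1}{3\sqrt{2}} - \frac{(\sqrt{2} + 2)\epsilon}{2\sqrt{2}}.$$
Then it remains to verify numerically that this quantity exceeds $3/4$ for all $\epsilon < 1/100$. Since $1/(3\sqrt{2}) = \sqrt{2}/6 < 0.2358$ and $(\sqrt{2} + 2)/(2\sqrt{2}) = 1/2 + 1/\sqrt{2} < 1.208$, the subtracted quantity is at most $0.2358 + 1.208/100 < 0.248 < 1/4$, so the lower bound exceeds $3/4$ as required.

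There is no genuine obstacle here: the work has already been done in Lemma \ref{thm: eigenvalue of union of three graphs}, and this corollary simply records the quantitative consequence that will be invoked later (for the spectral criterion $\lambda_1 > 1/2$ of Lemma \ref{lem: spectral criterion for Property (T)}, a gap $\lambda_1 \geq 3/4$ is comfortably more than needed). If anything, the only thing to double-check is that the constant $1/100$ in the hypothesis is indeed loose enough to absorb the $(\sqrt{2} + 2)\epsilon$ term, which the numerical estimate above confirms.
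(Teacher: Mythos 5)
Your proposal is correct and is essentially identical to the paper's own proof: both substitute $c_{1}=\epsilon$, $c_{2}=c_{3}=\epsilon+1/3$ into the bound of Lemma \ref{thm: eigenvalue of union of three graphs} and check numerically that $\frac{2/3+(\sqrt{2}+2)/100}{2\sqrt{2}}\leq\frac{1}{4}$. The arithmetic checks out.
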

 \begin{proof}
 We may apply Lemma \ref{thm: eigenvalue of union of three graphs} to deduce that 
 \begin{equation*}
 \begin{split}
     \lambda_{1}(G_{1}\cup G_{2}\cup G_{3})&\geq 1- \dfrac{(\sqrt{2}+2)\epsilon +2\slash 3}{2\sqrt{2}}\\
     & \geq 1- \dfrac{2\slash 3 +(2+\sqrt {2})\slash 100}{2\sqrt{2}}\\
     &\geq \dfrac{3}{4}.
     \end{split}
 \end{equation*}
 \end{proof}

\section{The spectrum of reduced random graphs}\label{sec: Spectral theory of unions of reduced random graphs}
We have almost understood the spectral distribution of $\Del{k}$ for $\langle A_{n}\;\vert\;R\rangle$ in the $\Gamma(n,k,d)$ model. However, there is one small complication which arises from the fact that we insist upon using cyclically reduced words as relators: the random graphs $\Delta_{k}( A_{n}\;\vert\;R)$ will not allow edges between certain types of words. Therefore we need to introduce a slightly altered model of random graphs.

Some of the results contained within this section are already known. Indeed, \cite[Section 11,12]{DRUTU_Mackay} provides far more general results concerning the eigenvalues of reduced random graphs: we provide alternate proofs of the results we require (again we stress that the results of \cite{DRUTU_Mackay} are far more general than the results we obtain) as the proofs provide an introduction to the proof strategies of alternate results we require that are not covered by \cite{DRUTU_Mackay}. We indicate in the text the results already known.
\subsection{Reduced random graphs}

\begin{definition}
Fix $n,l\geq 1,$ and let $0<p<1.$ For $i=1,\hdots , n$, let  $a_{i+n}:=a_{i}^{-1}$, and for $i=1,\hdots , 2n$ let $$S_{i}=\{w_{1}\hdots w_{l}\in \W{l}\;:\;w_{1}=a_{i}\}=\{(w_{1}\hdots w_{l})^{-1}\in \W{l}\;:\;w_{l}=a_{i}^{-1}\}.$$

For $v\in \W{l}$, let $i(v)$ be the unique integer such that $v\in S_{i(v)}$.
The \emph{reduced random graph} $\mathfrak{Red}(n,l,p)$ is the random graph obtained with vertex set $\W{l}$, and edge set constructed as follows. 

Let $i=1,\hdots, 2n.$ For each pair of vertices $v,w\in \W{l}$, add (each of) the directed edges:
\begin{itemize}
    \item $(v,w)$ labelled by $i(v)$ with probability $p(v,w)$,
    \item $(w,v)$, labelled by $i(w)$ with probability $p(w,v)$, where:
\end{itemize}
 $$p(s,t)=\begin{cases}p\mbox{ if }i(s)\neq i(t),\\
 0\mbox{ if }i(s)=i(t).
 \end{cases}$$
\end{definition}
Note that $\vert \W{l}\vert=2n(2n-1)^{l-1}.$ Furthermore, we can break $\mathfrak{Red}(n,l,p)$ into a union of graphs $\mathfrak{R}_{i}$, where for $i=1,\hdots ,2n$ each $\mathfrak{R}_{i}$ is a bipartite graph with vertex set $V_{1}=S_{i}$, $V_{2}=\W{l}\setminus S_{i}$, and each edge is added with probability $p$. Note that $\mathfrak{R}_{i}\sim G((2n-1)^{l-1},(2n-1)^{l},p)$: therefore, for large $p$, a.a.s. each graph $\mathfrak{R}_{i}$ is almost $((2n-1)^{l-1}p,(2n-1)^{l}p)$-regular. Hence for large $p$ a.a.s. the graph $\mathfrak{Red}(n,l,p)$ is almost $2(2n-1)^{l}p$-regular. Next we prove the following.
\begin{lemma}\label{lem: Red extension}
Let $n,l\geq 1$, let $p$ be such that $(2n-1)^{l}p=\Omega_{l}(\log^{6}(2n-1)^{l}))$, and let $G\sim \mathfrak{Red}(n,l,p)$. There exists a random graph $$G'\sim G(2n(2n-1)^{l-1},2p-p^{2})$$ such that $a.a.s.(l),$ 
$$\mu_{1}(A(G)-A(G'))\leq O_{l}\bigg(\max\bigg\{l,(2n-1)^{l}p^{2},\sqrt{(2n-1)^{l-1}p}\bigg\}\bigg).$$
\end{lemma}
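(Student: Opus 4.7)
The plan is to construct an explicit coupling of $G \sim \mathfrak{Red}(n,l,p)$ with a random graph $G' \sim G(2n(2n-1)^{l-1}, 2p-p^2)$ so that $A(G) - A(G')$ splits as a difference of two structurally simple random matrices, each of whose positive spectrum can be controlled by results from the previous section. For each unordered pair $\{v,w\}$ with $v \neq w$ and $i(v) \neq i(w)$, the two directed-edge indicators $X_{vw}, X_{wv}$ from the definition of $\mathfrak{Red}$ are independent Bernoulli$(p)$, giving $A(G)_{v,w} = X_{vw} + X_{wv} \in \{0,1,2\}$; I would set $A(G')_{v,w} := \max(X_{vw}, X_{wv})$, which is Bernoulli$(2p-p^2)$. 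For each pair $\{v,w\}$ with $v \neq w$ and $i(v) = i(w)$, draw an independent $Z_{vw} \sim \mathrm{Bern}(2p-p^2)$ and set $A(G')_{v,w} := Z_{vw}$. All off-diagonal entries of $A(G')$ are then mutually independent Bernoulli$(2p-p^2)$, so $G'$ has the required Erd\"os--R\'enyi distribution.

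With this coupling, $A(G) - A(G') = M_+ - M_-$, where $(M_+)_{v,w} = \min(X_{vw}, X_{wv}) \sim \mathrm{Bern}(p^2)$ whenever $i(v) \neq i(w)$ (and $0$ otherwise), while $(M_-)_{v,w} = Z_{vw} \sim \mathrm{Bern}(2p-p^2)$ whenever $v \neq w$ and $i(v) = i(w)$ (and $0$ otherwise). Both $M_+$ and $M_-$ are adjacency matrices of simple graphs. Setting $N := 2n(2n-1)^{l-1} = |V(G)|$ and applying Weyl's inequality yields
\[
\mu_1(M_+ - M_-) \leq \mu_1(M_+) + \mu_1(-M_-) = \mu_1(M_+) - \mu_N(M_-),
\]
so it suffices to bound the two terms on the right separately.

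For $M_+$, each vertex has degree distributed as $\mathrm{Bin}((2n-1)^l, p^2)$ with mean $\mu := (2n-1)^l p^2$. A multiplicative Chernoff bound in the regime $\mu \geq l$, and the standard tail $\mathbb{P}(\mathrm{Bin}(a,q) \geq k) \leq (eaq/k)^k$ applied with $k$ a sufficiently large multiple of $l$ in the regime $\mu < l$, combined with a union bound over the $N$ vertices, show that a.a.s.$(l)$ the maximum degree of $M_+$ is $O_l(\max\{l, (2n-1)^l p^2\})$; Lemma \ref{lem: max eigenvalue of adjacency matrix} then bounds $\mu_1(M_+)$ by the same quantity. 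For $M_-$, note that it is the adjacency matrix of the disjoint union of $2n$ independent Erd\"os--R\'enyi graphs $G((2n-1)^{l-1}, 2p-p^2)$. The hypothesis $(2n-1)^l p = \Omega_l(\log^6 (2n-1)^l)$ transfers to $(2n-1)^{l-1}(2p-p^2) = \Omega_l(\log^6 (2n-1)^{l-1})$, so Theorem \ref{lem: eigenvalue of ER random graph} applies to each block and yields $\max_{i \neq 1} |\mu_i| \leq O_l(\sqrt{(2n-1)^{l-1} p})$ a.a.s.$(l)$ on each. In particular $\mu_N(M_-) \geq -O_l(\sqrt{(2n-1)^{l-1} p})$, so $\mu_1(-M_-) \leq O_l(\sqrt{(2n-1)^{l-1} p})$, and combining with the bound on $\mu_1(M_+)$ proves the lemma.

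The main obstacle is conceptual rather than technical. The naive coupling makes $\|A(G) - A(G')\|_{\mathrm{op}}$ of order $(2n-1)^{l-1} p$, since $G'$ contains dense within-block Erd\"os--R\'enyi pieces entirely absent from $G$; this is far larger than the target $\sqrt{(2n-1)^{l-1} p}$. However, the lemma only asks for the top eigenvalue, not the operator norm, and the Perron eigenvalues of the within-block pieces of $M_-$ sit on the \emph{negative} side of $-M_-$, where Weyl's inequality discards them. This lets us trade each block's dangerous Perron eigenvalue $(2n-1)^{l-1} p$ for its spectral spread $\sqrt{(2n-1)^{l-1} p}$, which is precisely the bound asserted in the lemma.
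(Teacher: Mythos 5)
Your proof is correct and follows essentially the same route as the paper's: the paper constructs $G'$ by adjoining independent Erd\H{o}s--R\'enyi graphs $\Sigma_i$ on each block $S_i$ (with edge probability $2p-p^2$) and then collapsing multi-edges, which produces precisely your decomposition $A(G)-A(G') = M_+ - M_-$ with $M_+$ the inter-block ``both-directions-present'' graph (Bernoulli$(p^2)$) and $M_-$ the block-diagonal sum of the $\Sigma_i$'s; it then bounds $\mu_1(M_+)$ by a Chernoff max-degree estimate and $\mu_1(-M_-)$ via Theorem \ref{lem: eigenvalue of ER random graph} on each block, exactly as you do. The only difference is cosmetic: your coupling via $\max(X_{vw},X_{wv})$ and independent $Z_{vw}$ makes the one-sidedness of the estimate more transparent than the paper's ``collapse duplicates'' phrasing, and your closing remark about why the block Perron eigenvalues are harmless under Weyl is a correct and useful reading of the same computation.
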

\begin{proof}
Let $\Sigma_{i}$ be the random graph with vertex set $S_{i}$ and each edge added with probability $2p(1-p)$, so that $\Sigma_{i}\sim G((2n-1)^{l-1},2p-p^{2})$. By our assumptions on $p$, we see by Theorem \ref{lem: eigenvalue of ER random graph} that a.a.s.$(l)$ for all $i$ (there are 2n such $i$, so we take the intersection of the $2n$ events) $$\max_{j\neq 1}\vert \mu_{j}(A(\Sigma_{i}))\vert\leq O_{l}\bigg(\sqrt {(2n-1)^{l-1}p}\bigg).$$

Let $H=\bigcup\limits_{i}(\mathfrak{R}_{i}\cup \Sigma_{i})$. The probability that at least one edge connects two vertices $v,w\in S_{i}$ is $2p-p^{2}$. If $v\in S_{i}$ and $w\in S_{j}$ for $i\neq j$ the probability that at least one edge connects $v$ and $w$ is $1-(1-p)^{2}=2p-p^{2}.$ Hence, by collapsing duplicate edges in $H$ we obtain $G'\sim G(2n(2n-1)^{l-1},2p-p^{2})$. Next, note that 
$$A(G')=A(G)+\sum A_{i}+K,$$ where $K$ takes into account the double edges obtained from the unions, 
and $A_{i}$ is the adjacency matrix of the graph $G_{i}$ which has vertex set $V(G)$ and edge set $E(\Sigma _{i})$. Since the edge sets of each $\Sigma _{i}$ are pairwise disjoint, one can easily see that $\mu_{1}(-\sum_{i}A_{i})=\max_{i}\mu_{1}(-A_{i})$.

$K$ is the adjacency matrix of a random graph where edges are added with probability $0$ or $p^{2}$. Using the Chernoff bounds for the degrees, we can see that if $(2n-1)^{l}p^{2}=\Omega_{l}(l)$, then a.a.s.$(l)$ $\vert\vert K\vert\vert_{\infty}=O_{l}((2n-1)^{l}p^{2})$. Otherwise, we may deduce that $\vert \vert K\vert \vert_{\infty}=O_{l}(\log (2n-1)^{l})=O_{l}(l)$.

Hence by Weyl's inequality:
\begin{align*}
\mu_{1}(A(G)-A(G'))&=\mu_{1}(-K-\sum A_{i})\\
&\leq \mu_{1}(-K)+\mu_{1}(-\sum A_{i})
\\&=O_{l}\bigg(\max\bigg\{\vert \vert K\vert\vert_{\infty},\mu_{1}\bigg(-\sum A_{i}\bigg)\bigg\}\bigg)\\
&=O_{l}\bigg(\max\bigg\{\vert \vert K\vert\vert_{\infty},\max_{i}\{\mu_{1}(-A_{i})\}\bigg\}\bigg)\\
&\leq O_{l}\bigg(\max\bigg\{l, (2n-1)^{l}p^{2},\sqrt{(2n-1)^{l-1}p}\bigg\}\bigg).
    \end{align*}
\end{proof}

Similarly we define the following.
\begin{definition}
Fix $n\geq 1,l\geq 3, 0<p<1.$ Let $a_{i+n}:=a_{i}^{-1}$. For $i=1,\hdots , 2n$, let $$S_{i}'=\{w_{1}\hdots w_{l}\in \W{l}\;:\;w_{1}=a_{i}\},$$ and $$T_{i}'=\{(w_{1}\hdots w_{l+1})^{-1}\in \W{l+1}\;:\;w_{l+1}=a_{i}^{-1}\}.$$
The \emph{reduced random bipartite graph} $\mathfrak{BRed}(n,l,p)$ is the random graph with vertex set $V_{1}=\W{l},$ $V_{2}=\W{l+1}$, and for each $v\in S_{i}'$ and vertex $w\in V_{2}- T_{i}'$, the edge $(v,w)$ is added with probability $p$. The graph $\mathfrak{BR}_{i}$ is the random bipartite graph obtained as a subgraph with vertex set $V_{1}=S_{i}'$ and $V_{2}=\W{l+1}\setminus T_{i}'.$ \end{definition} Again, for large $p$ the graph $\mathfrak{BRed}(n,l,p)$ is almost $((2n-1)^{l+1}p,(2n-1)^{l}p)$-regular. We can approximate this graph by an Erd\"os--R\'enyi random bipartite graph, similarly to the case of $\mathfrak{Red}(n,l,p).$

\begin{lemma}\label{lem: extending bired}
Let $G\sim \mathfrak{BRed}(n,l,p)$, where $(2n-1)^{l}p=\Omega_{l}(\log (2n-1)^{l})$. There exists a random graph $G'\sim G(2n(2n-1)^{l-1},2n(2n-1)^{l},p)$ such that $a.a.s.(l),$ 
$$\mu_{1}(A(G)-A(G'))\leq (1+o_{l}(1))(2n-1)^{l-1\slash 2}p.$$

\end{lemma}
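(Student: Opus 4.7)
The strategy mirrors that of Lemma \ref{lem: Red extension}: the graph $\mathfrak{BRed}(n,l,p)$ fails to be a genuine Erd\"os--R\'enyi bipartite random graph exactly because, for each $i$, the edges between $S_i' \subseteq V_1 = \mathcal{W}(n,l)$ and $T_i' \subseteq V_2 = \mathcal{W}(n,l+1)$ are forbidden. I would recover a true Erd\"os--R\'enyi graph by inserting the missing edges independently, and then bound the spectral norm of the inserted block using the tools already developed.

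Concretely, for each $i = 1,\hdots,2n$, let $\Sigma_{i}$ be a copy of $G((2n-1)^{l-1},(2n-1)^{l},p)$ with vertex partition $(S_i', T_i')$, sampled independently of $G$ and of the other $\Sigma_j$'s, and set $G' := G \cup \bigcup_{i=1}^{2n} \Sigma_i$. Since the sets $S_i'$ partition $V_1$ and the sets $T_i'$ partition $V_2$, the potential edges of the $\Sigma_i$'s are precisely those forbidden to $\mathfrak{BRed}(n,l,p)$, so every potential bipartite edge between $V_1$ and $V_2$ appears in $G'$ independently with probability $p$. Hence $G' \sim G(2n(2n-1)^{l-1},2n(2n-1)^{l},p)$, as required.

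It remains to bound $\mu_1(A(G) - A(G')) = \mu_1\bigl(-\sum_{i} A(\Sigma_i)\bigr)$, where each $A(\Sigma_i)$ is viewed on the ambient vertex set $V_1 \sqcup V_2$ with isolated vertices elsewhere. Because the pairs $(S_i', T_i')$ are pairwise disjoint, $\sum_i A(\Sigma_i)$ is block-diagonal up to relabelling, so its extreme eigenvalues are the max/min over the individual blocks. Each $A(\Sigma_i)$ is a bipartite adjacency matrix, and therefore has spectrum symmetric about $0$, giving $\mu_1(-A(\Sigma_i)) = \mu_1(A(\Sigma_i))$. Combining these two observations yields
$$\mu_1(A(G) - A(G')) = \max_{1 \leq i \leq 2n} \mu_1(A(\Sigma_i)).$$
The hypothesis $(2n-1)^l p = \Omega_l(\log(2n-1)^l)$ (with $n$ fixed) implies $(2n-1)^{l-1} p = \Omega_l(\log(2n-1)^l)$, so Lemma \ref{lem: leading eigenvalue of random bipartite graph} applies to each $\Sigma_i$ and delivers $\mu_1(A(\Sigma_i)) \leq (1+o_l(1)) p \sqrt{(2n-1)^{l-1}\cdot (2n-1)^{l}} = (1+o_l(1))(2n-1)^{l-1/2}p$ a.a.s.$(l)$. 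A union bound over the $2n$ (constantly many) choices of $i$ preserves the a.a.s.$(l)$ conclusion and gives the claimed inequality.

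The argument is largely routine bookkeeping once the decomposition $A(G)-A(G') = -\sum_i A(\Sigma_i)$ is in place; the only subtlety to flag is the passage from $\mu_1$ of the negative to $\mu_1$ of the positive via bipartite symmetry of the spectrum, which is what makes the stated direction of the inequality work.
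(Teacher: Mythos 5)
Your construction and bounding argument coincide with the paper's proof: the paper forms the same $G'=G\cup\bigcup_i\Sigma_i$, uses disjointness of the blocks plus bipartite spectral symmetry to reduce to $\max_i\mu_1(A(\Sigma_i))$, and bounds this via the degree/adjacency-norm lemmas (which is exactly the content of Lemma \ref{lem: leading eigenvalue of random bipartite graph} that you cite). The proposal is correct and essentially identical in approach.
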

\begin{proof}
This follows similarly to the proof of Lemma \ref{lem: Red extension} for $\mathfrak{Red}(n,l,p)$.

For $i=1,\hdots , 2n$, let $\Sigma_{i}$ be the random graph with vertex set $V_{1}=S_{i}, V_{2}=T_{i}$ and each edge added with probability $p$, so that $\Sigma_{i}\sim G((2n-1)^{l-1},(2n-1)^{l},p)$. 

Then $$G'=G\cup\bigcup_{i}\Sigma_{i}\sim G(2n(2n-1)^{l-1},2n(2n-1)^{l},p).$$

We see that $\mu_{1}(A(G)-A(G'))=\mu_{1}(-\sum_{i}A_{i})$, where $A_{i}$ is the adjacency matrix of the graph with vertex set $V(G)$ and edge set $E(\Sigma_{i})$. Since the edge sets of the $\Sigma_{i}$ are pairwise disjoint, (and the graphs are bipartite, so their spectrum is symmetric around $0$) we see that $$\mu_{1}(-\sum_{i}A_{i})\leq \max_{i}\mu_{1}(-A_{i})=\max_{i}\mu_{1}(A_{i})\leq (1+o_{l}(1))(2n-1)^{l-1\slash  2}p,$$ by Lemmas \ref{lem: max eigenvalue of adjacency matrix} and \ref{lem: almost regular G(m,n,p)}.
\end{proof}

We may analyse the eigenvalues of reduced random graphs, as follows.

\begin{lemma}\label{lem: eigenvalue of red}\cite[Theorem 11.8, 11.9]{DRUTU_Mackay}
Let $n\geq 2$, and $p$ be such that $p=o_{l}(1)$ and $(2n-1)^{l}p=\Omega_{l}(l^{6}).$ Let $G\sim \mathfrak{Red}(n,l,p)$. Then a.a.s.$(l)$ $\lambda_{1}(G)\geq 1-o_{l}(1).$
\end{lemma}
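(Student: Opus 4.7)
The strategy is to compare $G$ to an ordinary Erd\"os--R\'enyi graph via Lemma~\ref{lem: Red extension}, apply the spectral bound of Theorem~\ref{lem: eigenvalue of ER random graph} there, and then transfer the bound back to $G$ using Weyl's inequality and the almost regularity of $G$.

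First I would record that, as already noted in the paragraph preceding Lemma~\ref{lem: Red extension}, the assumption $(2n-1)^l p = \Omega_l(l^6)$ forces $G \sim \mathfrak{Red}(n,l,p)$ to be almost $d$-regular \aas{l} with $d := 2(2n-1)^l p$. By Lemma~\ref{lem: Red extension}, there is a coupled $G' \sim G(2n(2n-1)^{l-1}, 2p-p^2)$ with
$$\mu_1\bigl(A(G) - A(G')\bigr) \leq O_l\bigl(\max\bigl\{l,\ (2n-1)^l p^2,\ \sqrt{(2n-1)^{l-1}p}\bigr\}\bigr).$$
Setting $m := 2n(2n-1)^{l-1}$ and $p' := 2p - p^2$, one checks $mp' = (2-p)\cdot 2n(2n-1)^{l-1}p = \Theta_l((2n-1)^l p) = \Omega_l(l^6) = \Omega_l(\log^6 m)$, so Theorem~\ref{lem: eigenvalue of ER random graph} yields $\max_{i\neq 1}|\mu_i(A(G'))| = O_l(\sqrt{mp'}) = O_l(\sqrt{(2n-1)^l p})$ \aas{l}.

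Weyl's inequality, applied to $A(G) = A(G') + (A(G) - A(G'))$, then gives
$$\mu_2(A(G)) \leq \mu_2(A(G')) + \mu_1\bigl(A(G) - A(G')\bigr) = o_l(d),$$
where the final equality uses: (a) $l = o((2n-1)^l p)$ since $(2n-1)^l p = \Omega(l^6)$; (b) $(2n-1)^l p^2 = p \cdot (2n-1)^l p = o((2n-1)^l p)$ since $p = o_l(1)$; and (c) both $\sqrt{(2n-1)^{l-1}p}$ and $\sqrt{(2n-1)^l p}$ are $o((2n-1)^l p)$ because $(2n-1)^l p \to \infty$. The Kotowski--Kotowski lemma \cite[Lemma 4.4]{kotowski}, applied to the almost $d$-regular graph $G$, then delivers $\lambda_1(G) \geq 1 - o_l(1)$, as required.

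The substantive spectral work is already packaged inside Lemma~\ref{lem: Red extension} (which encodes how the exclusion rule for the reduced graph model perturbs the adjacency matrix away from that of $G(m, 2p-p^2)$) and inside the F\"uredi--Koml\'os bound of Theorem~\ref{lem: eigenvalue of ER random graph}; the main obstacle here is really just the bookkeeping above, and the point is that the hypotheses $p = o_l(1)$ and $(2n-1)^l p = \Omega_l(l^6)$ are precisely what is needed so that every error term in the comparison is negligible against $d$.
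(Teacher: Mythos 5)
Your argument is correct and follows the same route as the paper: couple $G$ to $G'\sim G(2n(2n-1)^{l-1},2p-p^{2})$ via Lemma~\ref{lem: Red extension}, bound the second eigenvalue of $G'$ by the F\"uredi--Koml\'os theorem (Theorem~\ref{lem: eigenvalue of ER random graph}), and absorb the difference $A(G)-A(G')$ with Weyl's inequality, checking that each error term is $o_{l}((2n-1)^{l}p)$. The only (harmless) difference is that you apply Weyl to the unnormalised adjacency matrices and convert to $\lambda_{1}$ at the very end via \cite[Lemma 4.4]{kotowski}, whereas the paper works with $D^{-1\slash 2}AD^{-1\slash 2}$ throughout and handles the degree mismatch between $G$ and $G'$ by an explicit correction matrix of $o_{l}(1)$ norm.
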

\begin{proof}
Let $G'$ be the graph from Lemma \ref{lem: Red extension}, so that $G'\sim G(2n(2n-1)^{l-1},2p-p^{2})$ and $$\mu_{1}(A(G)-A(G'))\leq O_{l}\bigg(\max\bigg\{l,(2n-1)^{l}p^{2},\sqrt{(2n-1)^{l-1}p}\bigg\}\bigg).$$ Let $D'=D(G')$, and $A'=A(G')$. Note that $G$ is almost $2(2n-1)^{l}p$-regular, and hence, 
\begin{align*}
  \mu_{1}(D^{-1\slash 2}(A-A')D^{-1\slash 2})&\leq O_{l}\bigg(\dfrac{1+o_{l}(1)}{(2n-1)^{l}p}\max\bigg\{l,(2n-1)^{l}p^{2},\sqrt{(2n-1)^{l-1}p}\bigg\}\bigg)\\
  &= o_{l}(1).      
    \end{align*}

Next, by our assumption on p, $$2n(2n-1)^{l}p=\Omega_{l}(l^{6})=\Omega_{l}\bigg(\log^{6} 2n(2n-1)^{l-1}\bigg),$$
so that by Theorem \ref{lem: eigenvalue of ER random graph}, a.a.s.$(l)$, $$\mu_{2}\bigg(D'^{-1\slash 2}A'D'^{-1\slash 2}\bigg)=o_{l}(1).$$
Next, $D(G)^{-1\slash 2}AD(G)^{-1\slash 2}=\frac{(2-p)n}{2n-1}D'^{-1\slash 2}A'D^{-1\slash 2}+K$, where $\vert \vert K\vert \vert_{\infty}=o_{l}(1)$. Hence $\mu_{1}(K)=o_{l}(1)$. Therefore, by Theorem \ref{lem: eigenvalue of ER random graph} and Weyl's inequality, $a.a.s.(l)$
\begingroup
\allowdisplaybreaks
\begin{align*}
\mu_{2}(D^{-1\slash 2}AD^{-1\slash 2})&=\mu_{2}(D^{-1\slash 2}A'D^{-1\slash 2}+D^{-1\slash 2}AD^{-1\slash 2}-D^{-1\slash 2}A'D^{-1\slash 2})\\
        &\leq \mu_{2}(D^{-1\slash 2}A'D^{-1\slash 2})+\mu_{1}(D^{-1\slash 2}(A-A')D^{-1\slash 2})\\
        &=\mu_{2}\bigg(\frac{(2-p)n}{2n-1}D'^{-1\slash 2}A'D'^{-1\slash 2}+K\bigg)+o_{l}(1)\\
          &\leq \frac{(2-p)n}{2n-1}\mu_{2}\bigg(D'^{-1\slash 2}A'D'^{-1\slash 2}\bigg)+\mu_{1}(K)+o_{l}(1)\\
        &\leq \frac{(2-p)n}{2n-1}\mu_{2}\bigg(D'^{-1\slash 2}A'D'^{-1\slash 2}\bigg)+o_{l}(1)\\
        &=o_{l}(1).
\end{align*}
\endgroup
The result follows by Remark \ref{rmk: switching between eigenvalues of graphs}.
\end{proof}

\begin{lemma}\label{lem: eigenvalue of bired}
Let $n\geq 2$, and $p$ be such that $p=o_{l}(1)$ and $(2n-1)^{l}p=\Omega_{l}(l^{6}).$ Let $G\sim \mathfrak{BRed}(n,l,p)$. Then a.a.s.$(l)$ $$\lambda_{1}(G)\geq 1-1\slash(2n-1)-o_{l}(1).$$
\end{lemma}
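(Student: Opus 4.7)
The plan is to adapt the proof of Lemma \ref{lem: eigenvalue of red} to the bipartite setting, using Lemma \ref{lem: extending bired} in place of Lemma \ref{lem: Red extension}, and Theorem \ref{lem: eigenvalue of ER bipartite random graph} in place of Theorem \ref{lem: eigenvalue of ER random graph}. Let $G\sim\mathfrak{BRed}(n,l,p)$ and let $G'\sim G(2n(2n-1)^{l-1},2n(2n-1)^l,p)$ be the coupled bipartite random graph supplied by Lemma \ref{lem: extending bired}. A Chernoff bound applied separately to the two vertex classes (as in Lemma \ref{lem: almost regular G(m,n,p)}, using the hypothesis $(2n-1)^lp=\Omega_l(l^6)$) shows that a.a.s.$(l)$ the graph $G$ is almost $((2n-1)^{l+1}p,\,(2n-1)^lp)$-regular.

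Since $(2n-1)^lp=\Omega_l(l^6)$ forces both $2n(2n-1)^{l-1}p$ and $2n(2n-1)^lp$ to be $\Omega_l(l^6)$, Theorem \ref{lem: eigenvalue of ER bipartite random graph} applies to $G'$, giving that a.a.s.$(l)$,
$$\mu_2\bigl(D(G')^{-1/2}A(G')D(G')^{-1/2}\bigr)=o_l(1).$$
The bipartite degree parameters of $G$ and $G'$ differ only by a multiplicative factor of $(2n-1)/(2n)$, so a Weyl-type perturbation argument (identical in spirit to the transition between $D(G')$- and $D(G)$-normalisations in the proof of Lemma \ref{lem: eigenvalue of red}) yields that a.a.s.$(l)$,
$$\mu_2\bigl(D(G)^{-1/2}A(G')D(G)^{-1/2}\bigr)=o_l(1).$$

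The central computation, which accounts for the appearance of the constant $1/(2n-1)$, is the bound on $\mu_1\bigl(D(G)^{-1/2}(A(G)-A(G'))D(G)^{-1/2}\bigr)$. By Lemma \ref{lem: extending bired}, a.a.s.$(l)$, $\mu_1(A(G)-A(G'))\leq (1+o_l(1))(2n-1)^{l-1/2}p$. The argument used to prove Lemma \ref{lem: bipartite almost regular spectral calculation} goes through verbatim for any symmetric bipartite matrix supported on the vertex classes of an almost $(d_1,d_2)$-bipartite regular graph, not just for the adjacency matrix itself, so conjugation by $D(G)^{-1/2}$ differs from division by $\sqrt{d_1d_2}=(2n-1)^{l+1/2}p$ by an error of operator norm $o_l(1)$. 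Computing
$$\frac{(2n-1)^{l-1/2}p}{(2n-1)^{l+1/2}p}=\frac{1}{2n-1}$$
then produces the bound $\mu_1\bigl(D(G)^{-1/2}(A(G)-A(G'))D(G)^{-1/2}\bigr)\leq 1/(2n-1)+o_l(1)$.

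Combining via Weyl's inequality,
\begin{align*}
\mu_2\bigl(D(G)^{-1/2}A(G)D(G)^{-1/2}\bigr) &\leq \mu_2\bigl(D(G)^{-1/2}A(G')D(G)^{-1/2}\bigr)\\
&\qquad + \mu_1\bigl(D(G)^{-1/2}(A(G)-A(G'))D(G)^{-1/2}\bigr)\\
&\leq \frac{1}{2n-1}+o_l(1),
\end{align*}
and Remark \ref{rmk: switching between eigenvalues of graphs} converts this into $\lambda_1(G)\geq 1-1/(2n-1)-o_l(1)$. The only subtle step I anticipate is the $D(G)$-normalisation of the signed bipartite matrix $A(G)-A(G')$; once one accepts that Lemma \ref{lem: bipartite almost regular spectral calculation}'s argument applies in this setting, the constant $1/(2n-1)$ arises simply as the ratio of $(2n-1)^{l-1/2}$ (the operator norm of the union of complete bipartite graphs on the $(S_i',T_i')$ that encode the cyclic-reducedness constraint, weighted by $p$) to $(2n-1)^{l+1/2}$ (the geometric mean of the two vertex-class degrees of $G$).
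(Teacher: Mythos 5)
Your proposal is correct and follows essentially the same route as the paper: couple $G$ to $G'\sim G(2n(2n-1)^{l-1},2n(2n-1)^{l},p)$ via Lemma \ref{lem: extending bired}, control $\mu_{2}(D^{-1/2}A'D^{-1/2})$ through Theorem \ref{lem: eigenvalue of ER bipartite random graph} plus a degree-normalisation perturbation, and bound $\mu_{1}(D^{-1/2}(A-A')D^{-1/2})$ by $(2n-1)^{l-1/2}p\big/(2n-1)^{l+1/2}p=1/(2n-1)$ before combining with Weyl's inequality and Remark \ref{rmk: switching between eigenvalues of graphs}. The paper performs exactly this normalisation step (stating the $1/(2n-1)$ bound directly after invoking Lemma \ref{lem: extending bired}), so there is no substantive difference.
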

We note that we cannot prove that the above bound is sharp, but it is sufficient for our needs.
\begin{proof}
Let $G'$ be the graph from Lemma \ref{lem: extending bired}, so that $G'\sim G(2n(2n-1)^{l-1},2n(2n-1)^{l},p)$, and $\mu_{1}(A(G)-A(G'))\leq (1+o_{l}(1))(2n-1)^{l-1\slash 2}p.$ By Lemma \ref{lem: extending bired}, $$\mu_{1}(D^{-1\slash 2}(A-A')D^{-1\slash 2})\leq [1+o_{l}(1)]\dfrac{1}{2n-1}.$$
Next, $D(G)^{-1\slash 2}A' D^{-1\slash 2}=\frac{2n}{2n-1}D'^{-1\slash 2}A'D'^{-1\slash 2}+K$, where $$K=\begin{pmatrix}
0&H\\
H^{T}&0
\end{pmatrix}$$ and $\sqrt{\vert \vert H\vert\vert_{\infty}\vert \vert H\vert\vert_{1}}=o_{l}(1)$. Hence $\mu_{1}(K)=o_{l}(1)$. Therefore, by Theorem \ref{lem: eigenvalue of ER bipartite random graph}, and using Remark \ref{rmk: switching between eigenvalues of graphs} and Weyl's inequalities similarly to the proof of Lemma \ref{lem: eigenvalue of red},
\begin{align*}
        \mu_{2}(D^{-1\slash 2}AD^{-1\slash 2})&=\mu_{2}(D^{-1\slash 2}A'D^{-1\slash 2}+D^{-1\slash 2}AD^{-1\slash 2}-D^{-1\slash 2}A'D^{-1\slash 2})\\
        &\leq \mu_{2}(D^{-1\slash 2}A'D^{-1\slash 2})+\mu_{1}(D^{-1\slash 2}(A-A')D^{-1\slash 2})\\
        &\leq\mu_{2}\bigg(\frac{2n}{2n-1}D'^{-1\slash 2}A'D'^{-1\slash 2}+K\bigg)+\dfrac{1}{2n-1}+o_{l}(1)\\
        &\leq  \frac{2n}{2n-1}\mu_{2}(D'^{-1\slash 2}A'D'^{-1\slash 2})+\mu_{1}(K)+\dfrac{1}{2n-1}+o_{l}(1)\\
        &=\dfrac{1}{2n-1}+o_{l}(1).
    \end{align*}
The result follows by Remark \ref{rmk: switching between eigenvalues of graphs}.
\end{proof}

\subsection{Regular subgraphs of random graphs}
We now need an auxiliary result concerning regular subgraphs of random graphs. Recall that a subgraph $H$ of $G$ is \emph{spanning} if $V(H)=V(G).$ We first note the following.
\begin{theorem*}\cite{shamirupfal}\label{lem: large regular factors in er graph}
Suppose $mp=\omega(m)\log(m)$ for some $\omega (m)\rightarrow\infty. $ Let $\delta\geq \omega^{-\theta}$ for some $0<\theta< 1\slash 2$, and let $G\sim G(m,p)$. Then $a.a.s.(m)$, $G$ contains a $(1-\delta)mp$-regular spanning subgraph.
\end{theorem*}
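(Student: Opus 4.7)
The plan is to combine the near-regularity of $G\sim G(m,p)$ in this density regime with a Tutte/Hall-type existence theorem for degree-constrained spanning subgraphs. Set $d := \lfloor(1-\delta)mp\rfloor$.

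I would first establish near-regularity. Choose $\theta'\in(\theta,1/2)$ and set $\varepsilon := \omega^{-\theta'}$, so that $\varepsilon=o(\delta)$ and $\varepsilon^{2} mp = \omega^{1-2\theta'}\log m\to\infty$. The Chernoff bound together with a union bound over the $m$ vertices gives
$$\mathbb{P}\bigg(\exists\,v\in V(G):\;|\deg_{G}(v)-mp|\geq\varepsilon mp\bigg)\leq 2m\exp(-\varepsilon^{2} mp/3)=o(1),$$
so \aas{m} every vertex satisfies $|\deg_{G}(v)-mp|\leq\varepsilon mp$. In particular $\deg_{G}(v)\geq d$ for every $v$, and the \emph{surplus} $c_{v}:=\deg_{G}(v)-d$ is $(1+o(1))\delta mp$ uniformly in $v$. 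It therefore suffices to produce a spanning subgraph $F\subseteq G$ with $\deg_{F}(v)=c_{v}$ for every $v$, since then $H:=G\setminus F$ is $d$-regular spanning.

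The existence of such an $F$ is an $f$-factor problem; by Tutte's $f$-factor theorem (or, equivalently, by Gale--Ryser applied to the bipartite double cover of $G$), solvability reduces to a Hall-type combinatorial condition. In the near-regular setting, with all $c_{v}\in (1\pm o(1))\delta mp$, the condition collapses up to lower-order corrections to the edge-expansion inequality
$$e_{G}(S,V\setminus S)\;\geq\;\Omega\bigl(\delta mp\cdot\min\{|S|,m-|S|\}\bigr)\quad\text{for every }S\subseteq V.$$
For fixed $S$ of size $s\leq m/2$, the variable $e_{G}(S,V\setminus S)\sim\mathrm{Bin}(s(m-s),p)$ has mean $\geq smp/2$, and since $\delta\leq 1$ the Chernoff bound gives failure probability $\exp(-\Omega(smp))$. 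Summing $\binom{m}{s}\exp(-\Omega(smp))\leq (em/s)^{s}\cdot\exp(-\Omega(s\omega\log m))$ over $s=1,\dots, m/2$ yields $o(1)$, using $\omega\to\infty$ precisely to absorb the entropy cost $s\log(em/s)$.

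The main obstacle is extracting the clean edge-expansion statement from Tutte's theorem, which in full generality involves a parity-type constraint on the components of $G-S-T$. In the dense random regime this constraint is vacuous with overwhelming probability (no small odd components of the relevant form exist when $mp=\omega\log m$ with $\omega\to\infty$), but verifying this carefully requires attention. Modulo this detail the strategy is routine: the factor $\omega$ in $mp=\omega\log m$ provides exactly the asymptotic slack needed to verify the Hall condition across all $2^{m}$ subsets of $V$ simultaneously, and the hypothesis $\delta\geq\omega^{-\theta}$ with $\theta<1/2$ is precisely what aligns the target regularity $(1-\delta)mp$ with the Chernoff-scale deviation of $\deg_{G}(v)$.
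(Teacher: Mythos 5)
You have set out to prove a statement that the paper itself does not prove: it is quoted verbatim from \cite{shamirupfal}, and the only argument of this kind actually carried out in the paper is the proof of the bipartite analogue, Theorem \ref{thm: regular factor in random bipartite graphs}, via the Ore--Reyser theorem. Your overall shape --- Chernoff concentration of degrees, reduction to a degree-constrained spanning subgraph problem, verification of the combinatorial solvability criterion by a union bound, with $\omega\to\infty$ absorbing the entropy cost and $\theta<1/2$ matching $\delta$ to the degree fluctuations --- is exactly the shape of that bipartite proof, so the philosophy is the right one.

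The gap is at the one step that carries the weight in the non-bipartite setting. Tutte's $f$-factor criterion is not a single-set edge-expansion condition: it quantifies over pairs of disjoint sets $(S,T)$ and reads $\sum_{v\in S}f(v)+\sum_{v\in T}\big(\deg_{G-S}(v)-f(v)\big)\geq q(S,T)$, where $q(S,T)$ counts components of $G-S-T$ carrying an odd parity invariant. You assert this ``collapses up to lower-order corrections'' to $e_{G}(S,V\setminus S)=\Omega(\delta mp\cdot\min\{|S|,m-|S|\})$, but no such collapse is exhibited and it is not routine: $\sum_{v\in T}\deg_{G-S}(v)$ contains the term $-e_{G}(S,T)$, which for $|S|,|T|$ of order $m$ has size $\Theta(m^{2}p)$ and dwarfs the claimed expansion bound, so the inequality must be tracked with its correct signs over all $\Theta(3^{m})$ pairs $(S,T)$ rather than over single cuts; and $q(S,T)$ is not vacuous --- for suitable large $S\cup T$ the graph $G-S-T$ genuinely has isolated vertices (take $S\cup T\supseteq N(v)$), so the parity term must be dominated by the main terms, not dismissed. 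The parenthetical shortcut is also false: a $(d,d)$-regular spanning subgraph of the bipartite double cover, obtained from Gale--Ryser/Ore--Reyser, projects to a $2d$-regular sub-\emph{multigraph} of $G$ with edge multiplicities in $\{0,1,2\}$, not to a $d$-regular simple spanning subgraph, and extracting the latter is a separate Petersen-type argument that fails for odd $d$ in general. As written, your proposal establishes the easy surrounding estimates but not the theorem; the clean options are to cite \cite{shamirupfal} as the paper does, or to carry out the full Tutte-condition verification.
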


We wish to prove the analogue for random bipartite graphs. We do this similarly to \cite[Theorem 1.4]{ferberkirvelevichsudakov}, which proves the result in the regime $m_{1}=m_{2}$.

\begin{theorem*}\cite[Theorem 1.4]{ferberkirvelevichsudakov}
Let $m\geq 1$ and $p=p(m)>0$ be such that $mp=\omega (m)\log m$ for some $\omega \rightarrow\infty $ as $m\rightarrow\infty$. Let $\delta \geq \omega^{-\theta}$ for some $\theta <1\slash 2$, and $G\sim G(m,m,p)$. Then a.a.s.$(m)$ $G$ contains a $((1-\delta)mp,(1-\delta)mp)$-regular spanning subgraph.
\end{theorem*}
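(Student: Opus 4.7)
The plan is to reduce the problem to a Hall-type (max-flow) condition and then verify it for $G\sim G(m,m,p)$ via concentration and a careful union bound. Set $r := (1-\delta)mp$ and consider the standard max-flow network: a source $\sigma$ joined to each $u\in V_1(G)$ by an edge of capacity $r$, each $v\in V_2(G)$ joined to a sink $\tau$ by an edge of capacity $r$, and each edge of $G$ oriented from $V_1(G)$ to $V_2(G)$ with unit capacity. By integrality of max-flow, an $(r,r)$-regular spanning subgraph exists if and only if the max flow equals $rm$, which via max-flow min-cut is equivalent (after a short rewrite) to the Ore--Ryser condition
$$e_G(A,B)\geq r(|A|+|B|-m)$$
holding for every $A\subseteq V_1(G)$ and $B\subseteq V_2(G)$. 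The inequality is only non-trivial when $|A|+|B|>m$, so it suffices to show it holds for all such pairs a.a.s.$(m)$.

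For a fixed pair of sizes $|A|=s$, $|B|=t$ with $s+t>m$, one has $e_G(A,B)\sim \mathrm{Bin}(st,p)$ with mean $stp$, and the slack between this mean and the threshold factorises as
$$stp-(1-\delta)mp(s+t-m)=p\bigl[(m-s)(m-t)+\delta m(s+t-m)\bigr]\geq 0.$$
A standard Chernoff inequality then bounds the failure probability of a fixed $(A,B)$ by
$$\exp\!\left(-\tfrac{p\bigl[(m-s)(m-t)+\delta m(s+t-m)\bigr]^{2}}{3st}\right),$$
which is summed against the $\binom{m}{s}\binom{m}{t}\leq \exp(O((a+b)\log m))$ choices of $(A,B)$ (writing $a=m-s$, $b=m-t$), and then against all admissible $(s,t)$.

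A natural case split compares the two non-negative contributions to the slack. When the ``geometric'' term $(m-s)(m-t)=ab$ dominates, the Chernoff exponent is at least of order $pa^{2}b^{2}/m^{2}$, which against the entropy term $O((a+b)\log m)$ is easily handled using $mp=\omega\log m$. When the $\delta$-contribution dominates, one is forced into the regime $a,b\lesssim \sqrt{\delta}\,m$ with $A,B$ both relatively full, and the Chernoff exponent becomes of order $p\delta^{2}m^{2}$; controlling this against the corresponding entropy reduces to the comparison $p\delta^{2}m\gg \log m$. Under the hypothesis $\delta\geq \omega^{-\theta}$ with $\theta<1/2$, this reads $\omega^{1-2\theta}\log m\to\infty$, which holds precisely because $\theta<1/2$.

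The main obstacle lies in the boundary between these two regimes, near $a,b\sim \sqrt{\delta}\,m$: here neither the geometric term $ab$ nor the $\delta$-correction is cleanly dominant, and both the Chernoff exponent and the entropy are at their comparatively worst balance. It is the bookkeeping across this boundary that forces the appearance of $\delta\gg \omega^{-1/2}$, and hence $\theta<1/2$, in the hypotheses. Once this pinch is verified, summing the Chernoff-union-bound estimate over all admissible sizes $(s,t)$ completes the argument and yields the a.a.s.$(m)$ existence of the desired $(r,r)$-regular spanning subgraph.
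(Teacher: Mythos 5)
Your overall strategy---Ore--Ryser condition plus Chernoff concentration plus a union bound over all subset sizes---is the same as the paper's, which quotes the Ore--Ryser theorem directly and then verifies the required inequality for $G(m_1,m_2,p)$. Your variation is to bound $e_G(A,B)$ directly against the Ore--Ryser threshold $r(|A|+|B|-m)$ rather than against $(1-\delta)|A||B|p$ as the paper does, and the algebraic identity you use for the slack, $stp - r(s+t-m) = p\bigl[(m-s)(m-t)+\delta m(s+t-m)\bigr]$, is correct. That part is a legitimate reformulation.

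The genuine gap is in the entropy estimate $\binom{m}{s}\binom{m}{t}\leq\exp\bigl(O((a+b)\log m)\bigr)$ with $a=m-s$, $b=m-t$. This bound is valid but far too weak in exactly the regime where one of $s,t$ is small. Take $s=|A|=1$ and $t=|B|=m$, so $a=m-1$, $b=0$: the slack bracket is $\delta m$, $st=m$, and the Chernoff exponent is $\approx p\delta^2m/3=\omega^{1-2\theta}\log m/3$. Your entropy estimate gives $(a+b)\log m\approx m\log m$, so the exponent is dwarfed by the entropy and the union bound seems to fail; but in fact there are only $m$ choices of $A$ and one choice of $B$ here, so the true entropy is $O(\log m)$ and the exponent beats it comfortably. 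The correct bound is $\binom{m}{s}\binom{m}{t}\leq m^{\min(s,a)+\min(t,b)}$, which is what the paper exploits by case-splitting on $|A|\leq m_1/\log\log m_1$ and reindexing $\binom{m_2}{b}=\binom{m_2}{m_2-b}$. Similarly, your lower bound on the exponent, $pa^2b^2/m^2$, which uses $st\leq m^2$, is too loose in the same regime (for $a=2$, $b=m-3$ it gives $O(p)$ while the true exponent is $\Theta(pm)$), so both sides of your comparison need tightening before the ``geometric'' case closes.

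A secondary issue: $\delta$-dominance of the slack does not force $a,b\lesssim\sqrt{\delta}m$. The example $a=m-1$, $b=0$ has $ab=0$, so the $\delta$-term trivially dominates even though $a$ is nearly $m$. The pinch near $a,b\sim\sqrt{\delta}m$ that you flag is real and is indeed where $\theta<1/2$ enters, but the trickier bookkeeping is in the regimes where $A$ or $B$ is very small or very full, and that is precisely where your two explicit estimates are simultaneously too crude.
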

In the $k$-angular model, we have $m_{1}=m_{2}\slash n$, where $n\rightarrow\infty$, so we need to extend the above to a more general setting. We will use the following theorem, commonly known as the Ore-Reyser theorem: see for example \cite{orefactorthm} or Tutte \cite{Tuttefactors}. Recall that for a graph $G$, and disjoint sets $A,B\subseteq V(G)$, we define $e_{G}(A,B)$ to be the number of edges in $G$ between the sets $A$ and $B$.

\begin{theorem*}[Ore-Reyser Theorem]
Let $G$ be a bipartite graph and let $d_{1},d_{2}
\geq 0$. $G$ contains a $(d_{1},d_{2})$-regular spanning subgraph if and only if $d_{1}\vert V_{1}\vert =d_{2}\vert V_{2}\vert$, and for all $A\subseteq V_{1}$ and $B\subseteq V_{2}$: $d_{1}\vert A\vert \leq e_{G}(A,B)+d_{2}(\vert V_{2}\vert-\vert B\vert).$
\end{theorem*}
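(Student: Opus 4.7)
The plan is to prove both directions separately: necessity follows by double counting, while sufficiency is a straightforward application of the max-flow min-cut theorem to a flow network built from $G$. For necessity, if $H\subseteq G$ is a spanning $(d_{1},d_{2})$-regular subgraph, then counting $\vert E(H)\vert$ from each side of the bipartition yields $d_{1}\vert V_{1}\vert = d_{2}\vert V_{2}\vert$, and for any $A\subseteq V_{1}$, $B\subseteq V_{2}$,
$$d_{1}\vert A\vert \;=\; e_{H}(A,B)+e_{H}(A,V_{2}\setminus B)\;\leq\; e_{G}(A,B)+d_{2}(\vert V_{2}\vert - \vert B\vert),$$
using $H\subseteq G$ for the first summand and the $H$-degree bound on vertices of $V_{2}\setminus B$ for the second.

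For sufficiency, I would build a flow network $N$ with source $s$, sink $t$, and vertex set $\{s,t\}\cup V_{1}\cup V_{2}$, containing an edge $s\to v$ of capacity $d_{1}$ for each $v\in V_{1}$, an edge $w\to t$ of capacity $d_{2}$ for each $w\in V_{2}$, and for each $\{v,w\}\in E(G)$ with $v\in V_{1}$, $w\in V_{2}$ a directed edge $v\to w$ of capacity $1$. An integral $s$-$t$ flow of value $d_{1}\vert V_{1}\vert=d_{2}\vert V_{2}\vert$ must saturate every $s$- and $t$-incident edge, so its support among the $G$-edges gives a $(d_{1},d_{2})$-regular spanning subgraph; conversely, any such subgraph yields such a flow. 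By the Ford-Fulkerson integrality theorem it therefore suffices to exhibit a real-valued $s$-$t$ flow of that value, which by max-flow min-cut amounts to showing that every $s$-$t$ cut of $N$ has capacity at least $d_{1}\vert V_{1}\vert$.

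Given any cut with source side $S\ni s$ and sink side $T\ni t$, set $A=V_{1}\cap S$ and $B=V_{2}\cap T$. The cut edges are exactly the $s\to v$ edges with $v\in V_{1}\setminus A$, the $w\to t$ edges with $w\in V_{2}\setminus B$, and the $G$-edges from $A$ to $B$, so the cut capacity is
$$d_{1}(\vert V_{1}\vert -\vert A\vert)+d_{2}(\vert V_{2}\vert -\vert B\vert)+e_{G}(A,B),$$
which is at least $d_{1}\vert V_{1}\vert$ exactly when $d_{1}\vert A\vert \leq d_{2}(\vert V_{2}\vert -\vert B\vert)+e_{G}(A,B)$. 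This is precisely the hypothesis applied to $(A,B)$, so every cut has the required capacity and max-flow min-cut produces the desired flow, completing the proof. The only step requiring care is the translation between $s$-$t$ cuts of $N$ and pairs $(A,B)\subseteq V_{1}\times V_{2}$; once this bijective correspondence is pinned down, the stated inequality is exactly the min-cut lower bound needed, and there is no further obstacle.
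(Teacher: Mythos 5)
Your proof is correct. Note that the paper does not prove this statement at all: it quotes the Ore--Ryser theorem as a known result with citations to Ore and Tutte, so there is no internal proof to compare against. Your argument is the standard network-flow derivation, and all three pieces check out: the necessity direction by double counting ($d_{1}\vert A\vert=e_{H}(A,B)+e_{H}(A,V_{2}\setminus B)\leq e_{G}(A,B)+d_{2}(\vert V_{2}\vert-\vert B\vert)$) is fine; the translation between integral flows of value $d_{1}\vert V_{1}\vert=d_{2}\vert V_{2}\vert$ and $(d_{1},d_{2})$-regular spanning subgraphs is exactly right, since saturation of all source and sink edges forces the degree conditions; and the cut computation is correct --- a cut with source side $S$ meets precisely the edges $s\to v$ for $v\in V_{1}\setminus A$, the edges $w\to t$ for $w\in V_{2}\setminus B$, and the middle edges from $A$ to $B$, giving capacity $d_{1}(\vert V_{1}\vert-\vert A\vert)+d_{2}(\vert V_{2}\vert-\vert B\vert)+e_{G}(A,B)$, so the hypothesis is exactly the min-cut bound. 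The correspondence you flag at the end is genuinely a bijection (a cut is determined by the pair $(A,V_{2}\setminus B)$), so there is no hidden obstacle. The only implicit assumption worth making explicit is that $d_{1},d_{2}$ are integers, which is needed both for ``regular subgraph'' to make sense and for the Ford--Fulkerson integrality step; this is clearly intended in the statement.
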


Using the above, we can prove the following: this follows almost identically to the proof of \cite[Theorem 1.4]{ferberkirvelevichsudakov}, with very minor changes.

\begin{theorem}\label{thm: regular factor in random bipartite graphs}
Let $m_{2}=m_{2}(m_{1})\geq m_{1}$ and let $p=p(m_{1})>0$ be such that $m_{1}p=\omega (m_{1})\log m_{2}$ for some $\omega \rightarrow\infty $ as $m_{1}\rightarrow\infty$. Let $\delta \geq \omega^{-\theta}$ for some $\theta <1\slash 2$, and $G\sim G(m_{1},m_{2},p)$. Then a.a.s.$(m_{1})$ $G$ contains a $((1-\delta)m_{2}p,(1-\delta)m_{1}p)$-regular spanning subgraph with probability greater than $1-m_{2}^{-\Omega_{m_{1}}(1)}$.
\end{theorem}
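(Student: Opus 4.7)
The plan is to apply the Ore--Reyser theorem: writing $d_1 = (1-\delta) m_2 p$ and $d_2 = (1-\delta) m_1 p$ (taken to be integers by an imperceptible perturbation of $\delta$), $G$ admits a $(d_1,d_2)$-regular spanning subgraph as soon as $d_1 m_1 = d_2 m_2$ (trivially satisfied) and for every $A\subseteq V_1$, $B\subseteq V_2$
\[ d_1 |A| \;\leq\; e_G(A,B) + d_2 (m_2 - |B|). \]
Normalising with $\alpha = |A|/m_1$ and $\beta = |B|/m_2$, this becomes
\[ (1-\delta)(\alpha + \beta - 1)\, m_1 m_2 p \;\leq\; e_G(A,B), \]
which is automatic when $\alpha + \beta \leq 1$. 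So the task reduces to establishing this bound, with high probability, for all $(A,B)$ in the regime $\alpha + \beta > 1$.

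The key identity is $\alpha\beta - (\alpha+\beta-1) = (1-\alpha)(1-\beta) \geq 0$, so $\alpha\beta \geq \alpha+\beta-1$, and it suffices to verify the stronger inequality $e_G(A,B) \geq (1-\delta)\, \alpha\beta\, m_1 m_2 p$. Since $\mathbb{E}[e_G(A,B)] = \alpha\beta\, m_1 m_2 p$, this is a lower-tail Chernoff deviation of relative size $\delta$, giving
\[ \mathbb{P}\bigl(e_G(A,B) < (1-\delta)\alpha\beta m_1 m_2 p\bigr) \leq \exp\bigl(-\delta^2 \alpha\beta m_1 m_2 p / 2\bigr). \]
I would then union bound over all pairs $(A,B)$ of fixed sizes $(a,b)$, contributing a factor $\binom{m_1}{a}\binom{m_2}{b}$, and sum over $a,b$. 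Using the hypothesis $m_1 p = \omega \log m_2$ and $\delta^2 \geq \omega^{-2\theta}$ with $\theta < 1/2$, the Chernoff exponent satisfies $\delta^2 \alpha\beta m_1 m_2 p \geq \omega^{1-2\theta} \alpha\beta\, m_2 \log m_2$, which is super-logarithmic in $m_2$ as soon as $\alpha\beta m_2$ is not too small, easily dominating the binomial factors.

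The main obstacle is the corner case $\alpha$ close to $1$ with $\beta$ close to $0$ (and the symmetric case), where $\alpha\beta$ can be tiny so the Chernoff exponent is weak while $\binom{m_1}{a}$ is potentially as large as $2^{m_1}$. The remedy is to exploit the constraint $\alpha + \beta > 1$ itself: it forces $m_1 - a = m_1 (1 - \alpha) < m_1 \beta \leq b$ (using $m_1 \leq m_2$), so $\binom{m_1}{a} = \binom{m_1}{m_1 - a} \leq (e m_1 / (m_1-a))^{m_1-a}$ is comparable to $\binom{m_2}{b}$, and the two binomial factors jointly contribute $(em_2/b)^{O(b)}$. Meanwhile, in this range $\alpha \geq 1/2$, so the Chernoff exponent satisfies $\delta^2 \alpha\beta m_1 m_2 p \geq \delta^2 b m_1 p / 2 = \Omega(\omega^{1-2\theta} b \log m_2)$, which dominates $(em_2/b)^{O(b)}$ term by term. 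Combining with the straightforward interior range and trivially handled boundary values $a \in \{0,m_1\}$, $b\in\{0,m_2\}$, summation over $(a,b)$ bounds the total failure probability by $m_2^{-\Omega_{m_1}(1)}$, as required.
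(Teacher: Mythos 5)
Your proposal is correct and follows essentially the same route as the paper: reduce via the Ore--Reyser theorem to showing $e_G(A,B)\geq (1-\delta)|A||B|p$ for all pairs with $|A|/m_1+|B|/m_2>1$ (the paper's final display uses exactly your identity, in the form $(1-\delta)p(m_1-|A|)(m_2-|B|)\geq 0$), then apply Chernoff and a union bound. The only difference is cosmetic: the paper splits the union bound at thresholds $m_i/\log\log m_i$, whereas you handle the unbalanced corner by using $\alpha+\beta>1$ to force $m_1-a<b$ and compare the binomial factors directly; both work.
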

Again, the proof of this follows extremely similarly to the proof of \cite[Theorem 1.4]{ferberkirvelevichsudakov}; we include it for completeness.
\begin{proof}

Let $d_{1}=(1-\delta)m_{2}p$ and $d_{2}=(1-\delta)m_{1}p$. We wish to prove that a.a.s.$(m_{1})$ for all $A\subseteq V_{1}$ and $B\subseteq V_{2}$:
\begin{align*}
   0&\leq  e_{G}(A,B)+d_{2}(m_{2}-\vert B\vert )-d_{1}\vert A\vert \\
    &=e_{G}(A,B)+d_{1}(m_{1}-\vert A\vert -m_{1}\vert B\vert \slash m_{2}).
\end{align*}
If we are able to prove this, then we may conclude the desired result by the Ore-Reyser theorem. 
Note that if $\vert A\vert +m_{1}\vert B\vert \slash m_{2}\leq m_{1}$ then we are immediately finished. 
Let us suppose otherwise; we now analyse different cases. 

 To begin, let $n_{1}:=m_{1}\slash \log\log m_{1}$. We may now assume that $\vert A\vert +m_{1}\vert B\vert \slash m_{2}>m_{1}$. Suppose first that $\vert A\vert \leq n_{1}$, then $(m_{2}(m_{1}-\vert A\vert)\slash m_{1})+1\leq \vert B\vert \leq m_{2}$. Note that $e_{G}(A,B)$ has the distribution $Bin(\vert A\vert \vert B \vert ,p)$. We may apply the Chernoff bounds to deduce that 
 
 $$\mathbb{P}(e_{G}(A,B)\leq (1-\delta )\vert A\vert \vert B\vert p)\leq \exp\bigg(\dfrac{-\delta^{2}\vert A\vert \vert B\vert p}{2}\bigg).$$
 For $$\vert A\vert =a\leq n_{1},  \vert B\vert = b\geq \dfrac{m_{2}(m_{1}-a)}{m_{1}},$$ and $m_{1}$ sufficiently large, this is bounded above by 
 $$\exp\bigg(\dfrac{-\delta^{2}a m_{2}(m_{1}-a) p\slash m_{1}}{2}\bigg)\leq \exp\bigg(am_{2}p\dfrac{-\delta^{2}(m_{1}-n_{1})}{2m_{1}}\bigg)\leq \exp \bigg(-\delta^{2} \dfrac{m_{2}ap}{4}\bigg).$$ 
 Therefore the probability that there exists such sets with $e_{G}(A,B)\leq (1-\delta)\vert A\vert\vert B\vert p$ is bounded above by
\begin{align*}
        \sum\limits_{a=1}^{n_{1}}\sum\limits_{b=(m_{2}(m_{1}-a)\slash m_{1})+1}^{m_{2}}\binom{m_{1}}{a}\binom{m_{2}}{b}e^{-\delta^{2}m_{2}ap\slash 4}
        &= \sum\limits_{a=1}^{n_{1}}\sum\limits_{b=1}^{m_{2}a\slash m_{1}}\binom{m_{1}}{a}\binom{m_{2}}{b}e^{-\delta^{2} m_{2}ap\slash 4}\\
     {\small  \bigg (\mbox{using }\binom{m_{2}}{b}\leq \binom{m_{2}}{m_{2}a\slash m_{1}}\mbox{ for }b\leq m_{2}a\slash m_{1}\bigg)} &\leq \sum\limits_{a=1}^{n_{1}} \frac{m_{2}a}{m_{1}}\binom{m_{1}}{a}\binom{m_{2}}{m_{2}a\slash m_{1}}e^{-\delta^{2}m_{2}ap\slash 4} \\
      {\small  \bigg (\mbox{using }\binom{m_{1}}{a}\leq \binom{m_{2}}{m_{2}a\slash m_{1}}\mbox{ as }m_{2}\slash m_{1}\geq 1\bigg)}   &\leq \sum\limits_{a=1}^{n_{1}} \frac{m_{2}a}{m_{1}}\binom{m_{2}}{m_{2}a\slash m_{1}}^{2}e^{-\delta^{2} \frac{m_{2}}{m_{1}}am_{1}p\slash 4}\\
        &\leq m_{2} \sum\limits_{a=1}^{n_{1}}\bigg(\dfrac{m_{2}^{2}e^{2}}{m_{2}^{2}a^{2}\slash m_{1}^{2}}e^{-\Omega( \log m_{2})}\bigg)^{\frac{am_{2}}{m_{1}}}\\
        &=m_{2}^{-\Omega_{m_{1}}(1)},
    \end{align*}
since $\delta^{2}m_{1}p\geq \omega^{1-2\theta }\log m_{2}$ for some $\theta <1\slash 2.$ The case is similar for $\vert B\vert \leq n_{2}:=m_{2}\slash \log\log m_{2}$. Next we may assume that $\vert A\vert\geq n_{1}$ and that $\vert B\vert \geq n_{2}$. First assume that $\vert A\vert \leq m_{1}\vert B\vert\slash m_{2},$ so that $\vert B\vert \geq m_{2}\slash 2$. The probability there exists such $A,B$ with $e_{G}(A,B)\leq (1-\delta)\vert A\vert\vert B\vert p$ is bounded above by

\begin{align*}
         \sum\limits_{a=n_{1}}^{m_{1}}\sum\limits_{b=m_{2}\slash 2}^{m_{2}}\binom{m_{1}}{a}\binom{m_{2}}{b}e^{-\delta^{2} abp\slash 2}
         &\leq  \sum\limits_{a=n_{1}}^{m_{1}}\sum\limits_{b=m_{2}\slash 2}^{m_{2}}\binom{m_{1}}{a}\binom{m_{2}}{b} e^{-\delta^{2} n_{1}m_{2}p\slash 4}\\
         &\leq 2^{m_{1}+m_{2}}e^{-\delta^{2} m_{1}m_{2}p\slash (4\log \log m_{1})}\\
         &\leq m_{2}^{-\Omega_{m_{1}}(1)},
    \end{align*}
since $\delta^{2}m_{1}p\slash \log \log m_{1}\geq \omega^{1-2\theta}\log m_{2}\slash \log \log m_{1}=\Omega_{m_{1}} (1).$
Similarly, if $\vert A\vert \geq m_{1}\vert B\vert \slash m_{2}$, the probability that there exists $A,B$ with $e_{G}(A,B)\leq (1-\delta)\vert A\vert\vert B\vert p$ is bounded above by
\begin{align*}
         \sum\limits_{b=n_{2}}^{m_{2}}\sum\limits_{a=m_{1}\slash 2}^{m_{1}}\binom{m_{1}}{a}\binom{m_{2}}{b}e^{-\delta^{2} abp\slash 2}&\leq  \sum\limits_{b=n_{2}}^{m_{2}}\sum\limits_{b=m_{1}\slash 2}^{m_{1}}\binom{m_{1}}{a}\binom{m_{2}}{b} e^{-\delta^{2} n_{2}m_{1}p\slash 4}\\
         &\leq 2^{m_{1}+m_{2}}e^{-\delta^{2} m_{1}m_{2}p\slash (4\log \log m_{2})}\\
         &\leq m_{2}^{-\Omega_{m_{1}}(1)},
    \end{align*}
since $\delta ^{2} m_{1}p=\Omega_{m_{1}} (\log m_{2})$.

Now, consider $A\subseteq V_{1}, B\subseteq V_{2}.$  If $\vert A\vert +m_{1}\vert B\vert \slash m_{2}\leq m_{1}$, then it is immediate that $$0\leq  e_{G}(A,B)+d_{1}(m_{1}-\vert A\vert -m_{1}\vert B\vert \slash m_{2}).$$
  Otherwise, we have proved that a.a.s.$(m_{1})$ $e_{G}(A,B)\geq (1-\delta)\vert A\vert \vert B\vert p$, so that
a.a.s.$(m_{1})$
\begin{align*}
    &e_{G}(A,B)+d_{1}(m_{1}-\vert A\vert -m_{1}\vert B\vert \slash m_{2})\\
    &\geq (1-\delta)\vert A\vert \vert B\vert p +(1-\delta)m_{2}p(m_{1}-\vert A\vert -m_{1}\vert B\vert \slash m_{2})\\
&=(1-\delta) \vert A\vert \vert B\vert p +(1-\delta)m_{1}m_{2}p-(1-\delta)\vert A\vert m_{2}p -(1-\delta)m_{1}\vert B\vert p\\
    &=(1-\delta)p(\vert A\vert \vert B\vert+m_{1}m_{2}-\vert A\vert m_{1}-\vert B\vert m_{2})\\
    &=(1-\delta)p(m_{1}-\vert A\vert )(m_{2}-\vert B\vert)\\
    &\geq 0,
    \end{align*}
since $\vert A\vert\leq m_{1}$ and $\vert B\vert\leq m_{2}$.
The result now follows by the Ore-Reyser theorem.
\end{proof}

\subsection{Regular subgraphs in reduced random graphs}

Finally, we need to address the issue of vertex degrees: in order to use Lemma \ref{lem: eigenvalue of union of three graphs on same vertex set} and Theorem \ref{thm: eigenvalue of union of three graphs}, we need our graphs to be regular, and to have large eigenvalue. Therefore we need to show that $\mathfrak{Red}(n,l,p)$, $\mathfrak{BRed}(n,l,p)$ contain regular spanning subgraphs with large first eigenvalue.

\begin{lemma}\label{lem: spectra of regular reduced random graphs}
Let $n\geq 2$, and let $p$ be such that $(2n-1)^{l}p=\Omega_{l}(\log^{6} (2n-1)^{l+1})=\Omega_{l}(l^{6})$ and $p=o_{l}(1).$ Let $G_{1}\sim \mathfrak{Red}(n,l,p)$ and $G_{2}\sim \mathfrak{Bred}(n,l,p)$. There exists $\epsilon= \epsilon (p)=o_{l}(1)$ such that for all $o_{l}(1)=\delta\geq \epsilon$, a.a.s.$(l)$ there exist spanning subgraphs $H_{i}\leq G_{i}$ such that
\begin{enumerate}[label=$\roman*)$]
    \item $H_{1}$ is $2(1-\delta)(2n-1)^{l}p$-regular, with $\lambda_{1}(H_{1})\geq 1-o_{l}(1),$
    \item and $H_{2}$ is $((1-\delta)(2n-1)^{l+1}p,(1-\delta)(2n-1)^{l}p)$-regular, with\\ $\lambda_{1}(H_{2})\geq 1-\dfrac{1}{2n-1} +o_{l}(1).$
\end{enumerate}
\end{lemma}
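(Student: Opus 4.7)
The plan is to build each $H_i$ as a union of regular spanning subgraphs of the natural bipartite pieces of $G_i$, obtained by applying Theorem \ref{thm: regular factor in random bipartite graphs} independently to each piece, and then to deduce the eigenvalue bound by perturbation from $G_i$ using the results of Section \ref{sec: random graphs}.

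Concretely, for $G_1$ I would use the decomposition $G_1 = \bigcup_{i=1}^{2n} \mathfrak{R}_i$ with $\mathfrak{R}_i \sim G((2n-1)^{l-1},(2n-1)^l,p)$. The hypothesis $(2n-1)^l p = \Omega_l(l^6)$ implies $(2n-1)^{l-1}p = \omega(l)\log(2n-1)^l$ for some $\omega(l)\to\infty$, so I can set $\epsilon := \omega^{-1/3} = o_l(1)$ and apply Theorem \ref{thm: regular factor in random bipartite graphs}: for every $\delta \geq \epsilon$ with $\delta = o_l(1)$, each $\mathfrak{R}_i$ contains a $((1-\delta)(2n-1)^l p,(1-\delta)(2n-1)^{l-1}p)$-regular spanning subgraph $\widetilde{\mathfrak{R}}_i$ with failure probability $(2n-1)^{-l\Omega(1)}$. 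Since $n$ is fixed, the union bound over the $2n$ pieces keeps the event a.a.s.$(l)$. Setting $H_1 := \bigcup_i \widetilde{\mathfrak{R}}_i$, each $v \in \W{l}$ lies in $S_{i(v)}$ once and in $\W{l}\setminus S_j$ for the other $2n-1$ indices $j$, so $\deg_{H_1}(v) = (1-\delta)(2n-1)^l p + (2n-1)(1-\delta)(2n-1)^{l-1} p = 2(1-\delta)(2n-1)^l p$, giving the required regularity. Since $G_1$ is a.a.s.\ almost $2(2n-1)^l p$-regular, $G_1' := G_1\setminus H_1$ has every vertex of degree $(2\delta + o_l(1))(2n-1)^l p = o_l((2n-1)^l p)$; combining \cite[Lemma 4.5]{kotowski} with Lemma \ref{lem: eigenvalue of red} then yields $\lambda_1(H_1) = \lambda_1(G_1) + o_l(1) \geq 1 - o_l(1)$.

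For $G_2$ the argument is entirely parallel: write $G_2 = \bigcup_i \mathfrak{BR}_i$ with $\mathfrak{BR}_i \sim G((2n-1)^{l-1},(2n-1)^{l+1},p)$, apply Theorem \ref{thm: regular factor in random bipartite graphs} and union-bound to assemble $H_2 = \bigcup_i \widetilde{\mathfrak{BR}}_i$, which an analogous degree count shows is $((1-\delta)(2n-1)^{l+1}p,(1-\delta)(2n-1)^l p)$-regular. Here $G_2' := G_2 \setminus H_2$ is bipartite with the same partition as $H_2$ and its side-degrees are $o_l$ of the corresponding $H_2$-degrees, so Lemma \ref{lem: spectra under addition of small bipartite graph} together with Lemma \ref{lem: eigenvalue of bired} gives $\lambda_1(H_2) = \lambda_1(G_2) + o_l(1) \geq 1 - \tfrac{1}{2n-1} + o_l(1)$.

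The main obstacle, and the only place the strong hypothesis $(2n-1)^l p = \Omega_l(l^6)$ is really used, is ensuring that the tolerance $\epsilon(p)$ delivered by Theorem \ref{thm: regular factor in random bipartite graphs} can be taken to be $o_l(1)$ rather than a fixed small constant; the hypothesis provides exactly the slack needed to take $\omega\to\infty$ and hence $\epsilon=\omega^{-1/3}\to 0$. Everything else is degree bookkeeping plus direct citations of the perturbation lemmas already in place.
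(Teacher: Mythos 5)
Your proposal is correct and follows essentially the same route as the paper: decompose $G_1$ and $G_2$ into the bipartite pieces $\mathfrak{R}_i$, $\mathfrak{BR}_i$, extract regular spanning subgraphs from each piece via Theorem \ref{thm: regular factor in random bipartite graphs}, union-bound over the (finitely many, $n$ fixed) pieces, and then transfer the eigenvalue bounds from $G_i$ to $H_i$ using \cite[Lemma 4.5]{kotowski} and Lemma \ref{lem: spectra under addition of small bipartite graph} together with Lemmas \ref{lem: eigenvalue of red} and \ref{lem: eigenvalue of bired}. Your explicit degree bookkeeping (each $v\in\W{l}$ appearing once as a $V_1$-vertex and $2n-1$ times as a $V_2$-vertex) and the check that the hypothesis $(2n-1)^lp=\Omega_l(l^6)$ forces $\epsilon=\omega^{-\theta}=o_l(1)$ are exactly the points the paper's proof leaves implicit.
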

\begin{proof}
The first part of $i)$ and $ii)$, i.e. the existence of the regular subgraphs, follows from \cite{shamirupfal} and Lemma \ref{thm: regular factor in random bipartite graphs}. In particular for such a random graph $G_{1}$, and for $i=1,\hdots , n$, the random graph $\mathfrak{R}_{i}$ contains a $((1-\delta)(2n-1)^{l}p,(1-\delta)(2n-1)^{l-1}p))$-regular spanning subgraph $SP_{i}$ with probability at least $$1-(2n-1)^{-l\omega(l)}$$ for some $\omega=\Omega_{l}(1)$. Therefore the probability that all the graphs $\mathfrak{R}_{i}$ contain a spanning subgraph is at least $$\left(1-(2n-1)^{-l\omega(l)}\right)^{n}=1-o_{l}(1)$$ Taking $H_{1}=\cup_{i}SP_{i}$, the result on regular subgraphs follows. 

By \cite[Lemma 4.5]{kotowski} and Lemma \ref{lem: spectra under addition of small bipartite graph}, $\lambda_{1}(H_{i})=\lambda_{1}(G_{i})+o_{l}(1)$, since the $G_{i}$ is formed from $H_{i}$ by the addition of graphs of suitably small degrees. The result follows by Lemmas \ref{lem: eigenvalue of red} and \ref{lem: eigenvalue of bired}.
\end{proof}

Similarly, we can prove the following.
\begin{lemma}\label{lem: spectra of regular reduced random graphs $l$ growing}
Let $n\geq 2,l\geq 5$, and let $p$ be such that $(2n-1)^{l}p=\Omega_{n}(\log^{6} (2n-1)^{l+1})=\Omega_{n}(\log^{6}(2n-1))$ and $p=o_{n}(1).$ Let $G_{1}\sim \mathfrak{Red}(n,l,p)$ and $G_{2}\sim \mathfrak{Bred}(n,l,p)$. There exists $\epsilon= \epsilon (p)=o_{n}(1)$ such that for all $o_{n}(1)=\delta\geq \epsilon$, a.a.s.$(n)$ there exist spanning subgraphs $H_{i}\leq G_{i}$ such that
\begin{enumerate}[label=$\roman*)$]
    \item $H_{1}$ is $2(1-\delta)(2n-1)^{l}p$-regular, with $\lambda_{1}(H_{1})\geq 1-o_{n}(1),$
    \item and $H_{2}$ is $((1-\delta)(2n-1)^{l+1}p,(1-\delta)(2n-1)^{l}p)$-regular, with\\ $\lambda_{1}(H_{2})\geq 1-\dfrac{1}{2n-1} +o_{n}(1).$
\end{enumerate}
\end{lemma}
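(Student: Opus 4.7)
The plan is to mirror the proof of Lemma \ref{lem: spectra of regular reduced random graphs}, replacing $l$-asymptotics with $n$-asymptotics throughout. The argument splits into two steps: (a) extracting regular spanning subgraphs $H_i \leq G_i$ of the correct regularity, and (b) transferring the spectral gap from $G_i$ to $H_i$ via perturbation bounds.

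For (a), decompose $G_1 = \bigcup_{i=1}^{2n} \mathfrak{R}_i$ into its $2n$ bipartite pieces $\mathfrak{R}_i \sim G((2n-1)^{l-1}, (2n-1)^l, p)$, and similarly $G_2 = \bigcup_{i=1}^{2n} \mathfrak{BR}_i$ with $\mathfrak{BR}_i \sim G((2n-1)^{l-1}, (2n-1)^{l+1}, p)$. I would apply Theorem \ref{thm: regular factor in random bipartite graphs} to each piece to obtain regular spanning subgraphs $SP_i$, $BSP_i$ of the appropriate degrees, each with failure probability at most $(2n-1)^{-\Omega_n(1)}$; a union bound over the $2n$ pieces retains probability $1-o_n(1)$, and setting $H_1 := \bigcup_i SP_i$, $H_2 := \bigcup_i BSP_i$ yields spanning subgraphs of $G_1,G_2$ with the required regularities. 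The main obstacle is verifying the hypothesis $m_1 p \gg \log m_2$ of Theorem \ref{thm: regular factor in random bipartite graphs} for these small bipartite pieces in the $n$-asymptotic regime: for $\mathfrak{R}_i$ this becomes $(2n-1)^{l-1} p \gg l\log(2n-1)$, and under our hypothesis the relevant ratio is only $\Omega_n(\log^5(2n-1)/(l(2n-1)))$. For $l\geq 5$ bounded and $n\to\infty$ this is delicate, and one may need to tighten the union-bound bookkeeping in the proof of Theorem \ref{thm: regular factor in random bipartite graphs} (exploiting the combinatorial identity $|A||B| \geq |A|m_2 + |B|m_1 - m_1 m_2$ so that a single Chernoff bound for $e_G(A,B)$ suffices for the pairs $(A,B)$ with $|A|/m_1 + |B|/m_2 > 1$), or to first apply Shamir--Upfal to the full Erd\"os--R\'enyi approximation from Lemmas \ref{lem: Red extension}/\ref{lem: extending bired} and then carefully project onto $\mathfrak{Red}(n,l,p)$.

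For (b), the proofs of Lemmas \ref{lem: eigenvalue of red} and \ref{lem: eigenvalue of bired} carry over with asymptotics rewritten in $n$: under the hypothesis $(2n-1)^l p = \Omega_n(\log^6(2n-1)^{l+1})$, the Erd\"os--R\'enyi approximations from Lemmas \ref{lem: Red extension} and \ref{lem: extending bired} have parameters satisfying $mp = \Omega_n(\log^6 m)$, so Theorem \ref{lem: eigenvalue of ER random graph} and Theorem \ref{lem: eigenvalue of ER bipartite random graph} yield $\lambda_1(G_1) \geq 1 - o_n(1)$ and $\lambda_1(G_2) \geq 1 - 1/(2n-1) - o_n(1)$. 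Since $H_i$ differs from $G_i$ only by a graph whose maximum vertex degree is $o_n$ of the regularity parameter, Lemma \ref{lem: spectra under addition of small bipartite graph} (in the bipartite case $i=2$) and \cite[Lemma 4.5]{kotowski} (in the non-bipartite case $i=1$) transfer these spectral bounds to $H_i$ with only $o_n(1)$ loss, yielding the claimed estimates $\lambda_1(H_1) \geq 1 - o_n(1)$ and $\lambda_1(H_2) \geq 1 - 1/(2n-1) + o_n(1)$.
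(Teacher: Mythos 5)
Your proposal follows essentially the same route as the paper: extract regular spanning factors from the $2n$ bipartite pieces via Theorem \ref{thm: regular factor in random bipartite graphs} and a union bound, then transfer the spectral gap through the Erd\"os--R\'enyi approximation (the paper phrases step (b) more briefly, simply observing that for growing $n$ the proportion of disallowed edges is negligible and citing Theorems \ref{lem: eigenvalue of ER random graph} and \ref{lem: eigenvalue of ER bipartite random graph}, but this amounts to re-running Lemmas \ref{lem: eigenvalue of red} and \ref{lem: eigenvalue of bired} with $n$-asymptotics as you propose). The subtlety you flag in step (a) --- that the piecewise hypothesis $(2n-1)^{l-1}p\gg l\log(2n-1)$ does not follow from the stated assumption $(2n-1)^{l}p=\Omega_{n}(\log^{6}(2n-1))$ alone --- is genuine, but it is equally present in, and glossed over by, the paper's own proof; in the intended application (Theorem \ref{thm: Gp has Property (T) n tends to infinity} with $p=(2n-1)^{kd-k}$, $d>d_{k}$) the relevant edge probability is polynomially large in $(2n-1)$, so the hypothesis of Theorem \ref{thm: regular factor in random bipartite graphs} holds comfortably for each piece.
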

\begin{proof}
This is extremely similar to the previous lemma.

The first part of $i)$ and $ii)$, i.e. the existence of the regular subgraphs, follows from \cite{shamirupfal} and Lemma \ref{thm: regular factor in random bipartite graphs}. In particular for such a random graph $G_{1}$, and for $i=1,\hdots , n$, the random graph $\mathfrak{R}_{i}$ contains a $((1-\delta)(2n-1)^{l}p,(1-\delta)(2n-1)^{l-1}p))$-regular spanning subgraph $SP_{i}$ with probability at least $$1-(2n-1)^{-l\omega(n))}$$ for some $\omega=\Omega_{n}(1)$. Therefore the probability that all the graphs $\mathfrak{R}_{i}$ contain a spanning subgraph is at least $$\left(1-(2n-1)^{-l\omega(n)}\right)^{n}=1-o_{n}(1).$$ Taking $H_{1}=\cup_{i}SP_{i}$, the result on regular subgraphs follows. 

In the case of growing $n$, the graphs $\mathfrak{R}(n,l,p)$ and $\mathfrak{BRed}(n,l,p)$ have a very small proportion of disallowed edges so have eigenvalues extremely close to those of an (bipartite) Erd\"os--R\'enyi random graph. The result then follows from Theorems \ref{lem: eigenvalue of ER random graph} and \ref{lem: eigenvalue of ER bipartite random graph}.
\end{proof}

\section{Property (T) in random quotients of free groups}\label{sec: Property (T) in random quotients of free groups}
Finally, we may prove Theorems \ref{mainthm: property t k-angular model} and \ref{mainthm: property t density model}. We in fact provide the full proof for Theorem \ref{mainthm: property t density model}, as this is the harder of the two theorems to prove, and indicate how to alter the proof of this theorem in order to prove Theorem \ref{mainthm: property t k-angular model}. However, we first define a slightly different model of random groups.

\begin{definition}
Let $n\geq 2$, $k \geq 3$, and let $0<p=p(n,k)<1$. The random group model $\Gamma_{p}(n,k,p)$ is the model obtained as following. We let $\Gamma =\langle A_{n}\;\vert\;R\rangle,$ where $R$ is obtained by adding each word in $\mathcal{C}(n,k)$ with probability $p$.
\end{definition}
We in fact prove the following theorem.

\begin{theorem}\label{thm: Gp has Property (T)}
Let $n\geq 2$, and let $p$ be such that 
$$(2n-1)^{k\slash 3}p=\Omega_{k}(k^{6}).$$ Let $\Gamma_{k}\sim\Gamma_{p}(n,k,p)$. Then $$\lim_{k\rightarrow \infty }\mathbb{P}(\Gamma_{k}\mbox{ has Property }(T))=1.$$
\end{theorem}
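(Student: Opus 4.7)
The plan is to invoke Lemma \ref{lem: spectral criterion for Property (T)}: I will show that, a.a.s.$(k)$, $\lambda_{1}(\Delta_{k}(A_{n}\;\vert\;R))>1\slash 2$ when $R$ is sampled from $\Gamma_{p}(n,k,p)$. The hypothesis places no upper bound on $p$, but the spectral lemmas of Section \ref{sec: Spectral theory of unions of reduced random graphs} require the edge probability in the associated reduced random graphs to tend to zero, so I first reduce to a sparse regime by sub-sampling. Set $p'':=k^{6}(2n-1)^{-2k/3}$; by hypothesis $p''\leq p$ for $k$ large, so form $R'\subseteq R$ by retaining each relator independently with probability $p''/p$, giving $R'\sim\Gamma_{p}(n,k,p'')$ marginally. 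Since $\Gamma_{k}=\langle A_{n}\mid R\rangle$ is a quotient of $\langle A_{n}\mid R'\rangle$ and Property (T) passes to quotients, it suffices to prove the result for $R'$; assume $p=p''$.

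Next, decompose $\Delta_{k}:=\Delta_{k}(A_{n}\mid R)=\Sigma_{1}\cup\Sigma_{2}\cup\Sigma_{3}$, sorting the three edges contributed by each relator $r=r_{x}r_{y}r_{z}$ according to type. When $k\equiv 0\pmod{3}$, all three $\Sigma_{j}$ live on $\mathcal{W}(n,k/3)$; when $k\not\equiv 0\pmod{3}$, $\Sigma_{2}$ lives on the smaller partition $V_{1}$ and $\Sigma_{1},\Sigma_{3}$ are bipartite between $V_{1}$ and $V_{2}$---the setup of Lemma \ref{thm: eigenvalue of union of three graphs}. Direct enumeration of the middle pieces that yield cyclically reduced words shows that the marginal distribution of each $\Sigma_{j}$ coincides with $\mathfrak{Red}(n,l_{j},q_{j})$ or $\mathfrak{BRed}(n,l_{j},q_{j})$ for the appropriate $l_{j}\in\{\lfloor k/3\rfloor,\lceil k/3\rceil\}$ and $q_{j}=\Theta((2n-1)^{l_{j}'}p)$, where $l_{j}'$ is the length of the separating middle piece. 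With $p=p''$, one verifies $q_{j}=o_{k}(1)$ and $(2n-1)^{l_{j}}q_{j}=\Omega_{k}(k^{6})$, matching the hypotheses of Lemma \ref{lem: spectra of regular reduced random graphs}.

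Invoking that lemma, a.a.s.$(k)$ each $\Sigma_{j}$ contains a regular spanning subgraph $H_{j}$ with $\lambda_{1}(H_{j})\geq 1-o(1)$ (for $\Sigma_{2}$, and for all three graphs when $k\equiv 0\pmod{3}$) or $\lambda_{1}(H_{j})\geq 1-\frac{1}{2n-1}-o(1)$ (for the bipartite $\Sigma_{1},\Sigma_{3}$ when $k\not\equiv 0\pmod{3}$). For $k\equiv 0\pmod{3}$, Lemma \ref{lem: eigenvalue of union of three graphs on same vertex set} gives $\lambda_{1}(H_{1}\cup H_{2}\cup H_{3})\geq 1-o(1)$; otherwise, Lemma \ref{cor: eigenvalue of union of three graphs} applies (with $c_{1}=o(1)$ and $c_{2}=c_{3}=\frac{1}{2n-1}+o(1)\leq\frac{1}{3}+o(1)$ for $n\geq 2$), yielding $\lambda_{1}(H_{1}\cup H_{2}\cup H_{3})\geq\frac{3}{4}$. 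Finally, the edges of $\Delta_{k}\setminus(H_{1}\cup H_{2}\cup H_{3})$ have degree $o(1)$ relative to the degrees in each $H_{j}$, so by \cite[Lemma 4.5]{kotowski} and Lemma \ref{lem: spectra under addition of small bipartite graph},
\begin{equation*}
\lambda_{1}(\Delta_{k})=\lambda_{1}(H_{1}\cup H_{2}\cup H_{3})+o(1)>\tfrac{1}{2},
\end{equation*}
concluding the proof via Lemma \ref{lem: spectral criterion for Property (T)}.

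The main obstacle will be the identification of the marginal distribution of each $\Sigma_{j}$ in the decomposition step, since the three graphs share relators and are not mutually independent; one must carefully extract the independent Bernoulli marginal structure per edge type and intersect the three ``$H_{j}$ is good'' events via a union bound. A secondary issue is handling the sub-sampling lossless, but this is clean: the hypothesis $(2n-1)^{k/3}p=\Omega_{k}(k^{6})$ is exactly what is required to guarantee $p''\leq p$, and Property (T) descends automatically through the quotient $\langle A_{n}\mid R'\rangle\twoheadrightarrow\Gamma_{k}$.
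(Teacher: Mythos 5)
Your overall strategy is the paper's own: subsample down to the sparse regime, split $\Delta_{k}$ into $\Sigma_{1},\Sigma_{2},\Sigma_{3}$, identify these with reduced random graphs, extract regular spanning subgraphs via Lemma \ref{lem: spectra of regular reduced random graphs}, combine them with Lemmas \ref{lem: eigenvalue of union of three graphs on same vertex set} and \ref{cor: eigenvalue of union of three graphs}, and perturb back. The subsampling reduction, the identification of the parameters $q_{j}$ from the length of the separating middle piece, and the application of the union lemmas are all correct and match the paper.

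There is one genuine gap, located in the claim that ``the marginal distribution of each $\Sigma_{j}$ coincides with $\mathfrak{Red}(n,l_{j},q_{j})$ or $\mathfrak{BRed}(n,l_{j},q_{j})$'' and in the subsequent assertion that the edges of $\Delta_{k}\setminus(H_{1}\cup H_{2}\cup H_{3})$ have relative degree $o(1)$. Each $\Sigma_{j}$ is a \emph{multigraph}: the multiplicity of the edge $(r_{x},r_{z}^{-1})$ in $\Sigma_{1}$ is the number of middle pieces $r_{y}$ with $r_{x}r_{y}r_{z}\in R$, a binomial random variable, whereas $\mathfrak{Red}$ and $\mathfrak{BRed}$ have bounded edge multiplicities. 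Only the graph $\Sigma_{j}'$ obtained by collapsing parallel edges has (after replacing $q_{j}$ by $(1+o_{k}(1))q_{j}$) the stated marginal, and one must separately show that the excess parallel edges contribute $o(d_{j})$ to every vertex degree, i.e.\ that $\mu_{1}\big(D^{-1\slash 2}[A(\Sigma_{j})-A(\Sigma_{j}')]D^{-1\slash 2}\big)=o_{k}(1)$. This is not automatic: at your $p''$ the expected total number of double edges in $\Sigma_{j}$ tends to infinity. The paper handles this with Lemma \ref{lem: double edges form a matching}, showing that a.a.s.\ no pair of vertices is joined by three or more edges and the double edges form a matching, so the excess contributes at most one to each degree; the hypotheses of that lemma are precisely why the paper first reduces to $p\leq(2n-1)^{kd-k}$ with $d<4\slash 9$ (your $p''$ does satisfy them, so the gap is fillable). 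Without this multiplicity bound the final step $\lambda_{1}(\Delta_{k})=\lambda_{1}(H_{1}\cup H_{2}\cup H_{3})+o(1)$ is unjustified. By contrast, the non-independence of $\Sigma_{1},\Sigma_{2},\Sigma_{3}$, which you single out as the main obstacle, is harmless: Lemmas \ref{thm: eigenvalue of union of three graphs} and \ref{cor: eigenvalue of union of three graphs} are deterministic, so a union bound over the three a.a.s.\ events suffices, exactly as you suggest.
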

Assuming this, we may prove Theorem \ref{mainthm: property t density model}.
\begin{proof}[Proof of Theorem \ref{mainthm: property t density model}]
Fix $n\geq 2$ and $d>1\slash 3$. Choose $1\slash 3<d'<d$, and let $$\Gamma_{k}'=\langle A_{n}\;\vert\; R'\rangle\sim \Gamma_{p}(n,k,(2n-1)^{kd'-k}).$$ It is easily seen that a.a.s.$(k)$: $$\vert R'\vert=(1+o_{k}(1))(2n-1)^{kd'}.$$ Choose a random subset $R$ with $R'\subseteq R\subseteq \W{k}$ and $\vert R\vert = (2n-1)^{kd}$, and let $\Gamma_{k}=\langle A_{n}\;\vert\;R\rangle.$ Then $\Gamma_{k} \sim \Gamma (n,k,d),$ and there is a clear epimorphism $\Gamma_{k}'\twoheadrightarrow \Gamma_{k}$. Since Property (T) is preserved under epimorphisms, the result follows by Theorem \ref{thm: Gp has Property (T)}.
\end{proof}

Let $\Gamma$ be a random group in the $\Gamma_{p} (n,k,p)$ model. We consider the three cases.
\begin{enumerate}[label=$\mathbf{k=\arabic*\mbox{\bf\; mod }3.}$,start=0]
\item Let $l_{k}=L_{k}=k\slash 3.$ We may define the graphs $\Sigma_{1},\Sigma_{2},\Sigma_{3}$ where: $$V(\Sigma_{1})=V(\Sigma_{2})=V(\Sigma_{3})=\W{k\slash 3},$$ and for each relator $r=r_{x}r_{y}r_{z}$ with $r_{x},r_{y},r_{z}\in\W{k\slash 3}$, we add the edge $(r_{x},r_{z}^{-1})$ to $\Sigma_{1}$, $(r_{y},r_{x}^{-1})$ to $\Sigma_{2}$ and $(r_{z},r_{y}^{-1})$ to $\Sigma_{3}.$ 

\item Let $l_{k}=(k-1)\slash 3$ and $L_{k}=(k+2)\slash 3$. Again, we may write each relator $r=r_{x}r_{y}r_{z}$ for $r_{x}$, $r_{y}\in \W{(k-1)\slash 3}$ and $r_{z}\in \W{(k+2)\slash 3}.$ We again split the graph $\Del{k}$ into $\Sigma_{1}$, $\Sigma_{2}$, $\Sigma_{3}$, where: $$V(\Sigma_{1})=V(\Sigma_{3})=\W{(k-1)\slash 3}\sqcup \W{(k+2)\slash 3},$$ and $V(\Sigma_{2})=\W{(k-1)\slash 3}$. For each relator $r=r_{x}r_{y}r_{z}$, we add the edge $(r_{x},r_{z}^{-1})$ to $\Sigma_{1}$, $(r_{y},r_{x}^{-1})$ to $\Sigma_{2}$, and $(r_{z},r_{y}^{-1})$ to $\Sigma_{3}$. 
\item Let $l_{k}=(k+1)\slash 3$ and $L_{k}=(k-2)\slash 3$. Again, we may write each relator $r=r_{x}r_{y}r_{z}$ for $r_{x}$, $r_{y}\in \W{(k+1)\slash 3}$ and $r_{z}\in \W{(k-2)\slash 3}.$ We again split the graph $\Del{k}$ into $\Sigma_{1}$, $\Sigma_{2}$, $\Sigma_{3}$, where: $$V(\Sigma_{1})=V(\Sigma_{3})=\W{(k-2)\slash 3}\sqcup \W{(k+1)\slash 3},$$ and $V(\Sigma_{2})=\W{(k+1)\slash 3}$. For each relator $r=r_{x}r_{y}r_{z}$, we add the edge $(r_{x},r_{z}^{-1})$ to $\Sigma_{1}$, $(r_{y},r_{x}^{-1})$ to $\Sigma_{2}$, and $(r_{z},r_{y}^{-1})$ to $\Sigma_{3}$. 
\end{enumerate}
Next we show there aren't too many double edges in the graphs $\Sigma_{i}$, similarly to \cite{antoniuktriangle}.

\begin{lemma}\label{lem: double edges form a matching}
Let $n\geq 2$, and let $p$ be such that 
\begin{enumerate}[label=$\roman*)$]
    \item $(2n-1)^{2k-L_{k}}p^{3}=o_{k}(1)$,
    \item and $(2n-1)^{2k+l_{k}}p^{4}=o_{k}(1)$.
\end{enumerate}
Let $\Gamma_{k}\sim\Gamma_{p}(n,k,p)$, and let $\Sigma_{i}$ be described as above. For $i=1,2,3$ a.a.s.$(k)$ there is no pair of vertices $u,v$ with at least three edges between them in $\Sigma_{i}$, and the set of double edges in $\Sigma_{i}$ forms a matching, i.e. the endpoints of the double edges are all distinct. 
\end{lemma}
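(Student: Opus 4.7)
The plan is to bound the expected number of the two kinds of ``bad'' configurations in each $\Sigma_{i}$ and to conclude by Markov's inequality. The two configurations are: (a) an unordered vertex pair carrying at least three relator-edges, and (b) two multiplicity-two edges sharing a common endpoint.

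For configuration (a), I would fix $i\in\{1,2,3\}$ and a vertex pair $\{u,v\}$, and count the relators $r=r_{x}r_{y}r_{z}\in\mathcal{C}(n,k)$ whose prescribed edge in $\Sigma_{i}$ equals $\{u,v\}$. Such a relator is determined, up to a bounded multiplicative factor accounting for the two possible orderings of the endpoints and for the cyclic-reduction constraints at the three boundaries of $r$, by the choice of the single piece among $r_{x},r_{y},r_{z}$ that does not appear in the label of this edge. That free piece lies in either $\W{l_{k}}$ or $\W{L_{k}}$, contributing $O((2n-1)^{L_{k}})$ choices up to constants, while the vertex pair $\{u,v\}$ ranges over $O((2n-1)^{l_{k}+L_{k}})$ options (for the bipartite $\Sigma_{i}$) or $O((2n-1)^{2l_{k}})$ options (for the non-bipartite case). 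Multiplying by $p^{3}$, the factor corresponding to three independent relator choices, and summing over vertex pairs yields an expected triple-edge count bounded by a constant multiple of $(2n-1)^{2k-L_{k}}p^{3}$, which is $o_{k}(1)$ by hypothesis \emph{i)}.

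For configuration (b), I would sum over ordered triples $(u,v,w)$ of pairwise distinct vertices the number of ways to choose two pairs of distinct relators, one pair contributing the edge $\{u,v\}$ and the other contributing $\{v,w\}$. The four relators are automatically distinct because the determined pieces (depending on the edge being contributed) differ between the two pairs. Each of the four relators contributes an independent free piece, giving a factor of order $(2n-1)^{4\cdot(\text{free length})}$; the triple $(u,v,w)$ ranges over roughly $(2n-1)^{l_{k}+L_{k}+l_{k}}$ choices; and there is an overall probability factor of $p^{4}$. The resulting expected matching-violation count is bounded by a constant multiple of $(2n-1)^{2k+l_{k}}p^{4}$, which is $o_{k}(1)$ by hypothesis \emph{ii)}.

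A union bound over $i\in\{1,2,3\}$ together with Markov's inequality completes the argument. The main obstacle is the bookkeeping in the case analysis: because the vertex partitions of $\Sigma_{i}$ and the sizes of the free pieces of $r$ depend on both $i$ and $k\bmod 3$, one has to check the exponent bounds $2k-L_{k}$ and $2k+l_{k}$ separately in each of the nine subcases, being careful to identify correctly which of $r_{x},r_{y},r_{z}$ is ``free'' for the given edge. Multiplicative constants arising from the cyclic-reduction constraints on boundary letters are absorbed because only an $o_{k}(1)$ bound is required.
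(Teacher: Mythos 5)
Your proposal is correct and follows essentially the same route as the paper: a first-moment/union-bound count of the bad configurations (a pair with $\geq 3$ parallel edges; two double edges sharing an endpoint), where each offending relator is determined up to its one "free" piece, yielding the bounds $O_k\bigl((2n-1)^{2k-L_k}p^3\bigr)$ and $O_k\bigl((2n-1)^{2k+l_k}p^4\bigr)$ respectively. The paper simply writes out the computation for $\Sigma_1$ and declares the other cases similar, whereas you flag the case analysis explicitly; the residual discrepancies between your exponents and the stated ones are bounded powers of $(2n-1)$ with $n$ fixed, so they are harmlessly absorbed, as you note.
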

Note that for $1\slash 3<d<5\slash 12$, $p(d)=(2n-1)^{kd-k}$ satisfies the above conditions.

\begin{proof}
We prove this for $i=1$. Throughout, note that $k=2l_{k}+L_{k}$. The probability, $\mathbb{P}_{3}$, that there exists a pair of vertices $u,v$ with at least three edges between $u$ and $v$ is bounded above by 
\begin{align*}
        \mathbb{P}_{3}&\leq O_{k}\left ((2n-1)^{l_{k}+L_{k}}(2n-1)^{3l_{k}}p^{3}\right)\\
        &=O_{k}\left((2n-1)^{2k-L_{k}}p^{3}\right )\\
        &=o_{k}(1).
    \end{align*}
The probability, $\mathbb{P}_{doub}$ that there are vertices $u,v,w$ with double edges between $u$ and $v$ and $u$ and $w$ is bounded by
\begin{align*}
        \mathbb{P}_{doub}&=O_{k}\left((2n-1)^{l_{k}}(2n-1)^{2L_{k}}(2n-1)^{4l_{k}}p^{4}\right)\\
        &=O_{k}\left((2n-1)^{2k+l_{k}}p^{4}\right)\\
        &=o_{k}(1).
    \end{align*}
\end{proof}
This is sufficient to prove our main theorem.

\begin{proof}[Proof of Theorem \ref{thm: Gp has Property (T)}]
Let $\Gamma_{k}=\langle A_{n}\;\vert\;R\rangle\sim \Gamma_{p}(n,k,p)$, and consider $\Delta_{k}:=\Delta_{k}( A_{n}\;\vert\;R ).$ 
 Since Property (T) is preserved by epimorphisms, we may assume that $p\leq (2n-1)^{kd-k}$ for some $d<4\slash 9$:
for any $1\slash 3<d<4\slash 9$, $p(n,k,d)=(2n-1)^{kd-k}$ satisfies the conditions of Lemma \ref{lem: double edges form a matching} and the conditions of Theorem \ref{thm: Gp has Property (T)}.

As above we may write $\Delta_{k}=\Sigma_{1}\cup\Sigma_{2}\cup\Sigma_{3}$. Now, after collapsing edges, we find $\Sigma_{1}',\Sigma_{3}'$ with the marginal distribution (up to perturbing $p$ to $(1+o_{l}(1))p)$ of 

$$\begin{cases}
\mathfrak{Red}(n,k\slash 3,(2n-1)^{k\slash 3}p):\; k=0\mod 3,\\
\mathfrak{BRed}(n,(k-1)\slash 3,(2n-1)^{(k-1)\slash 3}p):\; k=1\mod 3,\\
\mathfrak{BRed}(n,(k-2)\slash 3,(2n-1)^{(k+1)\slash 3}p):\; k=2\mod 3.\\
\end{cases}$$
Similarly, by collapsing double edges we find $\Sigma_{2}'$ with the marginal distribution of $$\begin{cases}
\mathfrak{Red}(n,k\slash 3,(2n-1)^{k\slash 3}p):\; k=0\mod 3,\\
\mathfrak{Red}(n,(k-1)\slash 3,(2n-1)^{(k+2)\slash 3}p):\; k=1\mod 3,\\
\mathfrak{Red}(n,(k+1)\slash 3,(2n-1)^{(k-2)\slash 3}p):\; k=2\mod 3.\\
\end{cases}$$
 Furthermore, letting $\Sigma' =\Sigma_
{1}'\cup \Sigma_{2}'\cup \Sigma_{3}'$, then as usual we can see that $$\mu_{1}\bigg(D(\Sigma ')^{-1\slash 2}\bigg[ A(\Delta_{k})-A(\Sigma')\bigg]D(\Sigma ')^{-1\slash 2}\bigg)=o_{k}(1).$$ 

By Lemma \ref{lem: spectra of regular reduced random graphs}, there exists some $\delta=o_{k}(1)$ such that  a.a.s.$(k)$: $\Sigma_{2}'$ has a $2(1-\delta)d_{2}$-regular spanning subgraph, $\Pi_{2}$, with $\lambda_{1}(\Pi_{2})>1-o_{k}(1)$; if $k\neq 0\mod 3$ then $\Sigma_{1}',\Sigma_{3}'$ contain $((1-\delta)d_{1},(1-\delta)d_{2})$-regular spanning subgraphs $\Pi_{1},\Pi_{3}$, with 
$\lambda_{1}(\Pi_{1}),\lambda_{1}(\Pi_{3})\geq 1-1\slash (2n-1)+o_{k}(1)$; and if $k= 0\mod 3$ then $\Sigma_{1}',\Sigma_{3}'$ contain $2(1-\delta)d$-regular spanning subgraphs $\Pi_{1},\Pi_{3}$, with 
$\lambda_{1}(\Pi_{1}),\lambda_{1}(\Pi_{3})\geq 1-o_{k}(1).$

As $n\geq 2$, we may apply Lemmas \ref{lem: eigenvalue of union of three graphs on same vertex set} and \ref{cor: eigenvalue of union of three graphs} to deduce that a.a.s.$(k)$: $$\lambda_{1}(\Pi_{1}\cup \Pi_{2}\cup\Pi_{3})>3\slash 4.$$ We see that 
\begin{align*}
        \mu_{1}( A(\Sigma_{1}'\cup\Sigma_{2}'\cup\Sigma_{3}')-A(\Pi_{1}\cup\Pi_{2}\cup\Pi_{3}))&\leq\frac{\delta+o_{k}(1)}{1-\delta}\vert \vert A(\Pi_{1}\cup\Pi_{2}\cup\Pi_{3})\vert \vert_{\infty}\\
        &=o_{k}(1)\vert \vert A(\Pi_{1}\cup\Pi_{2}\cup\Pi_{3})\vert \vert_{\infty}.
    \end{align*}
Hence, letting $\Pi=\Pi_{1}\cup\Pi_{2}\cup\Pi_{3}$, we see that a.a.s.$(k)$:
\begin{align*}
\lambda_{1}(\Sigma ')&=1-\mu_{2}(D^{-1\slash 2}(\Sigma')A(\Sigma')D^{-1\slash 2}(\Sigma'))\\
&=1-\mu_{2}(D^{-1\slash 2}(\Sigma')[A(\Pi)+A(\Sigma ')-A(\Pi)]D^{-1\slash 2}(\Sigma'))\\
&\geq1-\mu_{2}(D^{-1\slash 2}(\Sigma')A(\Pi)D^{-1\slash 2}(\Sigma'))\\
&\;\;-\mu_{1}(D^{-1\slash 2}(\Sigma')(A(\Sigma ')-A(\Pi))D^{-1\slash 2}(\Sigma'))\\
&=1-\bigg(\frac{1}{1-\delta}+o_{k}(1)\bigg)\mu_{2}(D^{-1\slash 2}(\Pi)A(\Pi)D^{-1\slash 2}(\Pi))\\
&\;\;-\bigg(\frac{1}{1-\delta}+o_{k}(1)\bigg)\mu_{1}(D^{-1\slash 2}(\Pi)(A(\Sigma ')-A(\Pi))D^{-1\slash 2}(\Pi))\\
&\geq 1-\dfrac{1}{4}\bigg(\frac{1}{1-\delta}+o_{k}(1)\bigg)-\bigg(\frac{1}{1-\delta}+o_{k}(1)\bigg)\frac{\delta}{1-\delta}\\
&=\dfrac{3[1+o_{k}(1)]}{4}.
    \end{align*}
Since $\lambda_{1}(\Delta_{k})=\lambda_{1}(\Sigma ')+o_{k}(1),$ it follows by Theorem \ref{lem: spectral criterion for Property (T)} that a.a.s.$(k)$ $\Gamma_{k}$  has Property (T). However, as Property (T) is preserved under epimorphisms, it follows immediately that a.a.s.$(k)$ a random group $\Gamma_{k}\sim \Gamma_{p}(n,k,p)$ has Property (T) for any $p$ with $$(2n-1)^{2k\slash 3}p=\Omega_{k}(k).$$
\end{proof}

To prove Theorem \ref{mainthm: property t k-angular model}, we wish to prove the corresponding result for the $k$-angular model: the approach to achieve this is similar.

\begin{lemma}\label{lem: double edges form a matching k fixed}
Let $n\geq 2$, and let $p$ be such that there exists $M\geq 1$ with 
\begin{enumerate}[label=$\roman*)$]
    \item $(2n-1)^{(M+1)l_{k}+L_{k}}p^{M}=o_{n}(1)$,
    \item  $(2n-1)^{2l_{k}+ML_{k}}p^{M}=o_{n}(1)$,
    \item $(2n-1)^{(2M+1)l_{k}+ML_{k}}p^{2M}=o_{n}(1)$,
    \item $(2n-1)^{3Ml_{k}+L_{k}}p^{2M}=o_{n}(1)$,
    \item $(2n-1)^{(M+1)l_{k}+2ML_{k}}p^{2M}=o_{n}(1)$
\end{enumerate}
Let $\Gamma_{k}\sim\Gamma_{p}(n,k,p)$, and let $\Sigma_{i}$ be described as above. For $i=1,2,3$ a.a.s.$(n)$ in $\Sigma_{i}$ there is no pair of vertices $u,v$ with at least $M$ edges between them, and no vertex is connected to more than $M$ other vertices by double edges.
\end{lemma}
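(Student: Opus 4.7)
The plan is a first-moment (Markov's inequality) argument, directly generalising the proof of Lemma \ref{lem: double edges form a matching}. The five hypotheses $i)$--$v)$ are precisely the expected counts of the various ``bad configurations'' that must be excluded, so the proof reduces to enumerating each type and matching it to one of these five quantities.

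The key structural observation is that in each $\Sigma_i$, the multiplicity of the edge between two vertices $u$ and $v$ equals the number of relators in $R$ of a specific form $r_x r_y r_z \in \mathcal{C}(n,k)$ in which two of the three segments are pinned down by $u$ and $v$ and the third---call it the \emph{middle segment}---varies freely. For $\Sigma_1$ and $\Sigma_3$ the middle segment is $r_y$, of length $l_k$; for $\Sigma_2$ it is $r_z$, of length $L_k$. In each case the number of choices making $r_x r_y r_z$ cyclically reduced is $(1-o_n(1))(2n-1)^{l_k}$ or $(1-o_n(1))(2n-1)^{L_k}$. Since distinct middle words give distinct relators, and each relator of $\mathcal{C}(n,k)$ is added independently with probability $p$, the edge multiplicity is binomial; crucially, for distinct pairs $(u,v)$, or for distinct neighbours of a fixed $u$, the underlying sets of relators are disjoint, so the corresponding events are independent.

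For the ``no pair has $\geq M$ edges'' part I would union-bound over pairs. The event that a fixed pair has $\geq M$ edges has probability at most $\binom{(2n-1)^s}{M}p^M$, where $s$ is the length of the middle segment of $\Sigma_i$. Summing over pairs: for $\Sigma_1$ and $\Sigma_3$ (middle length $l_k$, pairs in $\mathcal{W}(n,l_k) \times \mathcal{W}(n,L_k)$---the case $k\equiv 0 \bmod 3$ with $l_k = L_k$ is subsumed) the bound becomes $(2n-1)^{(M+1)l_k + L_k}p^M = o_n(1)$ by hypothesis $i)$; for $\Sigma_2$ (middle length $L_k$, pairs from a single class $\mathcal{W}(n,l_k)$) it becomes $(2n-1)^{2l_k + ML_k}p^M = o_n(1)$ by hypothesis $ii)$.

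For the ``no vertex has $M$ double-edge neighbours'' part, I would fix $u$ and $M$ distinct potential neighbours $v_1,\dots,v_M$; each event ``$\{u,v_j\}$ is a double edge'' has probability at most $\binom{(2n-1)^s}{2}p^2 \leq (2n-1)^{2s}p^2$ and these $M$ events are independent by the disjointness remark. Union-bounding over $u$ and over $M$-element subsets of the opposite class gives three sub-cases: $u \in \mathcal{W}(n,l_k)$ in $\Sigma_1$ or $\Sigma_3$ gives $(2n-1)^{(2M+1)l_k + ML_k}p^{2M}$, matching $iii)$; $u \in \mathcal{W}(n,L_k)$ in $\Sigma_1$ or $\Sigma_3$ gives $(2n-1)^{3Ml_k + L_k}p^{2M}$, matching $iv)$; and $u$ a vertex of $\Sigma_2$ gives $(2n-1)^{(M+1)l_k + 2ML_k}p^{2M}$, matching $v)$. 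All five expected counts vanish by hypothesis, so Markov's inequality delivers the claim. There is no genuine mathematical obstacle; the only work is the careful bookkeeping of vertex classes across the three $\Sigma_i$ and the three residues $k \bmod 3$, mapping each to the correct one of $i)$--$v)$.
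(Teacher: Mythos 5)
Your proposal is correct and follows essentially the same first-moment/union-bound argument as the paper: enumerate bad configurations, count the number of choices of pinned vertices and free ``middle'' segments, multiply by the appropriate power of $p$, and match each of the five resulting quantities to one of the hypotheses $i)$--$v)$. The only slight imprecision is your remark that the middle segment for $\Sigma_3$ is $r_y$; by the paper's construction the edge $(r_z,r_y^{-1})$ in $\Sigma_3$ leaves $r_x$ free rather than $r_y$, but since $r_x$ also has length $l_k$ this has no effect on the bound.
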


\begin{proof}
We first prove this for $i=1,3$. Throughout, note that $k=2l_{k}+L_{k}$. The probability, $\mathbb{P}_{M,1}$, that there exists a pair of vertices $u,v$ with at least $M$ edges between $u$ and $v$ is bounded above by 
\begin{align*}
        \mathbb{P}_{M,1}&\leq O_{n}\left((2n-1)^{l_{k}+L_{k}}(2n-1)^{Ml_{k}}p^{M}\right)\\
        &=O_{n}\left((2n-1)^{(M+1)l_{k}+L_{k}}p^{M}\right)\\
        &=o_{n}(1).
    \end{align*}
The probability, $\mathbb{P}_{doub,1}$ that there are vertices $u\in V_{1}$ and $v_{1},\hdots ,v_{M}\in V_{2}$ with double edges between $u$ and each $v_{i}$ is bounded by
\begin{align*}
        \mathbb{P}_{doub,1}&=O_{n}\left((2n-1)^{l_{k}}(2n-1)^{ML_{k}}(2n-1)^{2Ml_{k}}p^{2M}\right)\\
        &=O_{n}\left((2n-1)^{(2M+1)l_{k}+ML_{k}}p^{2M}\right)\\
        &=o_{n}(1).
    \end{align*}
    
    The probability, $\mathbb{P}_{doub,1}'$ that there are vertices $u\in V_{2}$ and $v_{1},\hdots ,v_{M}\in V_{1}$ with double edges between $u$ and each $v_{i}$ is bounded by
\begin{align*}
        \mathbb{P}_{doub,1}'&=O_{n}\left((2n-1)^{Ml_{k}}(2n-1)^{L_{k}}(2n-1)^{2Ml_{k}}p^{2M}\right)\\
        &=O_{n}\left((2n-1)^{L_{k}+3Ml_{k}}p^{2M}\right)\\
        &=o_{n}(1).
    \end{align*}
    
Let's now switch to $\Sigma_{2}$. Then the probability, $\mathbb{P}_{M,2}$, that there exists a pair of vertices $u,v$ with at least $M$ edges between $u$ and $v$ is bounded above by 
\begin{align*}
        \mathbb{P}_{M,2}&\leq O_{n}\left((2n-1)^{2l_{k}}(2n-1)^{ML_{k}}p^{M}\right)\\
        &=o_{n}(1).
    \end{align*}
Finally, the probability, $\mathbb{P}_{doub,2}$, that there are vertices $u$ and $v_{1},\hdots ,v_{M}$ with double edges between $u$ and each $v_{i}$ is bounded by
\begin{align*}
        \mathbb{P}_{doub,2}&=O_{n}\left((2n-1)^{(M+1)l_{k}}(2n-1)^{2ML_{k}}p^{2M}\right)\\
        &=o_{n}(1).
    \end{align*} 
\end{proof}

\begin{remark} 
Let $d>0$ and $p_{d}=(2n-1)^{kd-k}.$ Then $p_{d}$ satisfies the conditions above for some $M$ if respectively: 

\begin{enumerate}[label=$\roman*)$]
\item $l_{k}+kd-k<0$, so that $d<(l_{k}+L_{k})\slash k$,
\item $L_{k}+kd-k<0$, so that $d<2l_{k}\slash k$,
\item $2l_{k}+L_{k}+2kd-2k<0$, i.e. $d<1\slash 2$ since $2l_{k}+L_{k}=k$,
\item $3l_{k}+2kd-2k<0$, so that 
$d<(k+L_{k}-l_{k})\slash 2k$, and
\item $l_{k}+2L_{k}+2kd-2k<0$, so that $d<(k+l_{k}-L_{k})\slash 2k$.
\end{enumerate}
This reduces to $d<(k-1)\slash 2k.$ For $k\geq 8$, this is satisfied whenever $d<7\slash 16$. For $k\geq 8$, we have $d_{k}\leq 5\slash 12<7\slash 16$, and so we can find $d$ satisfying the requirements of the above lemma and Theorem \ref{thm: Gp has Property (T) n tends to infinity}.
\end{remark}
We can now observe the following.

\begin{theorem}\label{thm: Gp has Property (T) n tends to infinity}
Let $n\geq 2$, $k\geq 8$. Let $p$ be such that 
\begin{align*}
    (2n-1)^{2l_{k}}p&=\Omega_{n}\bigg(\log (2n-1)^{L_{k}}\bigg),\mbox{ and }
     (2n-1)^{l_{k}+L_{k}}p=\Omega_{n}\bigg(\log (2n-1)^{l_{k}}\bigg).  
\end{align*}
Let $\Gamma_{k}\sim\Gamma_{p}(n,k,p)$. Then $\lim_{n\rightarrow \infty }\mathbb{P}(\Gamma_{k}\mbox{ has Property }(T))=1.$
\end{theorem}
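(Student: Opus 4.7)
The plan is to mimic the proof of Theorem \ref{thm: Gp has Property (T)} almost verbatim, with $k\to\infty$ replaced by $n\to\infty$, using the already-established $n$-growing analogues of the key lemmas. As a first step, since $k\geq 8$ is fixed, the remark following Lemma \ref{lem: double edges form a matching k fixed} shows that the range $d_{k}<d<(k-1)/(2k)$ is nonempty (indeed $d_{k}\leq 5/12<7/16\leq (k-1)/(2k)$), and for any such $d$ the value $p_{d}=(2n-1)^{kd-k}$ satisfies the hypotheses of Lemma \ref{lem: double edges form a matching k fixed}. By the usual epimorphism argument (Property (T) is preserved under quotients), it suffices to prove the statement for $p\leq p_{d}$ for some such $d$.

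Next, I would decompose $\Delta_{k}(A_{n}\,\vert\,R)=\Sigma_{1}\cup\Sigma_{2}\cup\Sigma_{3}$ exactly as in the proof of Theorem \ref{thm: Gp has Property (T)}, according to $k\bmod 3$. Lemma \ref{lem: double edges form a matching k fixed} controls the multi-edges in each $\Sigma_{i}$, so that after collapsing duplicates we obtain graphs $\Sigma_{1}',\Sigma_{2}',\Sigma_{3}'$ whose marginal distributions (up to perturbing $p$ by a factor $1+o_{n}(1)$) coincide with appropriate $\mathfrak{Red}(n,\cdot,\cdot)$ or $\mathfrak{BRed}(n,\cdot,\cdot)$ models. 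The degree bound in Lemma \ref{lem: double edges form a matching k fixed} guarantees that the adjacency perturbation caused by this edge-collapse has operator norm $o_{n}(1)$ relative to the normalised Laplacian, so by Weyl's inequality and Remark \ref{rmk: switching between eigenvalues of graphs} it suffices to bound $\lambda_{1}(\Sigma_{1}'\cup\Sigma_{2}'\cup\Sigma_{3}')$.

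Now I would invoke Lemma \ref{lem: spectra of regular reduced random graphs $l$ growing} (the $n$-growing version), which provides a.a.s.$(n)$ regular (respectively bi-regular) spanning subgraphs $\Pi_{i}\leq \Sigma_{i}'$ with $\lambda_{1}(\Pi_{2})\geq 1-o_{n}(1)$ and, for $k\not\equiv 0\bmod 3$, $\lambda_{1}(\Pi_{1}),\lambda_{1}(\Pi_{3})\geq 1-1/(2n-1)+o_{n}(1)$ (and $\geq 1-o_{n}(1)$ in the $k\equiv 0 \bmod 3$ case, using Lemma \ref{lem: eigenvalue of union of three graphs on same vertex set} instead). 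Applying Lemma \ref{thm: eigenvalue of union of three graphs} with $c_{1}=o_{n}(1)$ and $c_{2},c_{3}=1/(2n-1)+o_{n}(1)$, and using the appropriate vertex identifications, I get
$$\lambda_{1}(\Pi_{1}\cup\Pi_{2}\cup\Pi_{3})\geq 1-o_{n}(1)>\tfrac{3}{4}$$
a.a.s.$(n)$. The transfer back from $\Pi_{1}\cup\Pi_{2}\cup\Pi_{3}$ to $\Sigma_{1}'\cup\Sigma_{2}'\cup\Sigma_{3}'$, and then to $\Delta_{k}$, is via the same Weyl's-inequality computation used at the end of the proof of Theorem \ref{thm: Gp has Property (T)} (losing $o_{n}(1)$ at each stage), yielding $\lambda_{1}(\Delta_{k})>1/2$ a.a.s.$(n)$, and Property (T) follows from Lemma \ref{lem: spectral criterion for Property (T)}.

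The main obstacle is purely bookkeeping rather than conceptual: one must check that the hypotheses on $p$ in the statement feed through to the precise hypotheses required by Lemma \ref{lem: double edges form a matching k fixed}, by Lemma \ref{lem: spectra of regular reduced random graphs $l$ growing} (which requires $(2n-1)^{l}p=\Omega_{n}(\log^{6}(2n-1))$ for the appropriate $l\in\{l_{k},L_{k}\}$), and by Theorem \ref{thm: regular factor in random bipartite graphs} (in the bipartite case, where the larger side is $V_{2}=\mathcal{W}(n,L_{k})$). The two displayed assumptions on $p$ are precisely calibrated so that, after the edge-collapse, both $\mathfrak{Red}(n,l_{k},\cdot)$ and the appropriate $\mathfrak{BRed}$ model have the densities needed by these lemmas; the constraint $k\geq 8$ enters only to ensure the relevant density interval is nonempty and that the remark's conditions (i)--(v) are simultaneously satisfiable.
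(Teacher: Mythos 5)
Your proposal is correct and follows essentially the same route as the paper's own (outline of) proof: reduce via the remark and the epimorphism argument to a density where Lemma \ref{lem: double edges form a matching k fixed} applies, decompose $\Delta_{k}$ into the $\Sigma_{i}$ as in Theorem \ref{thm: Gp has Property (T)}, collapse multi-edges, extract regular spanning subgraphs via Lemma \ref{lem: spectra of regular reduced random graphs $l$ growing}, apply the union-of-three-graphs eigenvalue bound, and transfer back with Weyl's inequality. Your version is, if anything, slightly more explicit than the paper's outline about where the hypotheses on $p$ and the constraint $k\geq 8$ are used.
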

We remark that for $d>d_{k}$ and $p=(2n-1)^{kd-k}$ the above is satisfied.

\begin{proof}[Outline of proof of Theorem \ref{thm: Gp has Property (T) n tends to infinity}]
This follows similarly to the proof of Theorem \ref{thm: Gp has Property (T)}. We may assume that $p$ also satisfies the requirements of Lemma \ref{lem: double edges form a matching k fixed} for some $M$. The main replacement is in the fact that the $\Sigma_{i}$ have a very small proportion of disallowed edges so can be treated as having the marginal distribution of an (bipartite) Erd\"os--R\'enyi random graph. We then find regular spanning subgraphs using Lemma \ref{lem: spectra of regular reduced random graphs $l$ growing}, and repeat the above argument, using Lemma \ref{lem: double edges form a matching k fixed} in place of Lemma \ref{lem: double edges form a matching}. This guarantees us that by collapsing double edges, we remove at most $M^{2}$ edges adjacent to each vertex, and the argument follows similarly.
\end{proof}
We then apply the above to prove Theorem \ref{mainthm: property t k-angular model}, as in the case for the density model.
	\bibliographystyle{amsplain}
	\bibliography{bib}
	{\sc{DPMMS, Centre for Mathematical Sciences, Wilberforce Road, Cambridge, CB3 0WB, UK}}\\
	\emph{E-mail address}: cja59@dpmms.cam.ac.uk
\end{document}